\newtheorem{thm}{Theorem}[section]
\newtheorem{prop}[thm]{Proposition}
\newtheorem{lem}[thm]{Lemma}
\newtheorem{cor}[thm]{Corollary}
\newtheorem*{claim*}{Claim}
\newtheorem*{thm*}{Theorem}
\newtheorem{ex}[thm]{Example}
\newtheorem*{cx}{Counterexample}
\newtheorem{man}{Theorem}
\theoremstyle{definition}
\newtheorem*{rem*}{Remark}
\newtheorem{defi}[thm]{Definition}
\newtheorem{rem}[thm]{Remark}
\newcommand{\F}{\mathbb{F}}
\newcommand{\fl}{\mathfrak{l}}
\newcommand{\fa}{\mathfrak{a}}
\newcommand{\fb}{\mathfrak{b}}
\newcommand{\fp}{\mathfrak{p}}
\newcommand{\GL}{\mathrm{GL}}
\newcommand{\ff}{\mathfrak{f}}
\newcommand{\fL}{\mathfrak{L}}
\newcommand{\cO}{\mathcal{O}}
\newcommand{\xdownarrow}[1]{%
  {\left\downarrow\vbox to #1{}\right.\kern-\nulldelimiterspace}
}
\title{CM Drinfeld Modules, Self-isogenous Modular Polynomials, and Volcano Structure}
\author{Chien-Hua Chen \thanks{Electronic address: \texttt{cc45@aub.edu.lb}; ORCID: \texttt{0000-0003-3267-5603} ; Corresponding author}}
\affil{}
\begin{document}

\maketitle
\abstract
In this paper, we develop a view of self-isogenous modular polynomials and the $\fl$-cyclic isogeny graph for CM Drinfeld modules of arbitrary rank $r$. On the computational side, we give an explicit procedure to construct the  modular polynomial $\Phi_{J,\fa}(X,X)$ for Drinfeld modules of rank $r\geqslant 3$ with $\fa$ a prime ideal of $\F_q[T]$. When $\fa=(T)$, we provide an algorithm to compute  $\Phi_{J,\fa}(X,X)$; when $\fa=(T^2+T+1)$, we give an explicit degree bound on  $\Phi_{J,\fa}(X,X)$. On the structural side, we formulate a generalized $\fl$-cyclic volcano structure and prove that the generalized volcano appears in a component of the full $\fl$-cyclic isogeny graph for rank-$r$ Drinfeld modules with complex multiplication.

\section {Introduction}

On the modular polynomials arised from Drinfeld modules, the rank-2 case is classical: Bae \cite{B97} gave explicit constructions of Drinfeld modular polynomials, while Schweizer \cite{S95} analyzed their computational properties in the level-\((T)\) case. Later on, Breuer--R\"uck\cite{BR16} generalized to a formulation of modular polynomials arised from Drinfeld modules of arbitrary rank-$r$. However, it is unclear to formulate a generalization of bivariate modular polynomial $\Phi_\fl(X,Y)$ for Drinfeld modules of rank $r>2$. 

In the elliptic setting, computing the self-isogenous modular polynomial\(\Phi_{\ell}(X,X)\) is directly from the restriction $\Phi_{\ell}(X,Y)|_{X=Y}$. On the other hand, $\Phi_{\fl}(X,X)$ is related to computing Hilbert class polynomials \(H_{\mathcal O}(X)\) via CM theory, roots of \(H_{\mathcal O}\) parametrize CM \(j\)-invariants, and \(\Phi_{\ell}(X,X)\) detects those admitting an \(\ell\)-cyclic self-isogeny.
A computational breakthrough is Sutherland's \cite{AA10} CRT method for Hilbert class polynomials, which computes \(H_{\mathcal O}\)(X) modulo many primes by navigating \(\ell\)-isogeny graphs of elliptic curves over finite fields and then reconstructs \(H_{\mathcal O}\) over \(\mathbb{Z}\) by the Chinese Remainder Theorem. This provides an algorithmic bridge between isogeny graphs and modular polynomials. Inspired from the elliptic curve case, we study this phenomenon for Drinfeld modules of rank $r$, especially for the case $r>2$.

Motivated from \cite{BR16}, instead of defining an appropriate bivariate Drinfeld modular polynomial for rank $r>2$, we define the single variate Drinfeld modular polynomial $\Phi_{J,\fl}(X,X)$ directly. The polynomial was constructed by considering a polynomial over $\mathbb{C}_\infty$ whose roots are those self $\fl$-cyclic isogenus Drinfeld modules, up to isomorphic copy. Besides, inspired from the work of \cite{BR24}, we compute the $(T)$-cyclic self isogenous modular polynomial for arbitrary rank $r>2$, and conclude our computation as an algorithm (see Algorithm 1 at p.9). Then we generalize our computation to $\fa$-cyclic self-isogenous modular polynomials, wehere $\fa$ is a prime ideal of $\F_q[T]$. We show in Example \ref{deg2} that even in the simple situation where $\fa$ is generated by a degree-$2$ polynomial, the computation is much more complicated, and we can only deduce an upper bound for $\Phi_{J,\fa}(X,X)$. On the other hand, we prove that our formulation for $\Phi_{J,\fa}(X,X)$ still has a factorization into products of Hilbert class polynomials.

\begin{man}[Theorem \ref{hilbert}]

Fix a basic $J$-invariant for rank-$r$ Drinfeld modules, given a prime ideal $\mathfrak{a}\in A$ with monic generator $a$,

$$\Phi_{J,a}(X,X)=\prod_{\mathcal{O} \textrm{ imaginary $r$-degree order over $\F_q[T]$}}H_{\mathcal{O},J}(X)^{\gamma(\mathcal{O},a)},$$
where 
$$\begin{array}{ll}
\gamma(\mathcal{O},a)=&|\{ u\in \mathcal{O} \mid u \textrm{ primitive and }{\rm Nm}_{K/F}(u)=c\cdot a \textrm{ for some }c\in\F_q^*\}/\mathcal{O}^*|.
\end{array}
$$
Note that $u\in\mathcal{O}$ is primitive if $u\neq d\cdot \beta$ for some non-unit element $d\in A$ and $\beta \in \mathcal{O}$. Moreover, $\Phi_{J,a}(X,X)$ is invariant under ${\rm Gal}(\bar{F}/F)$-action, i.e. $\Phi_{J,a}(X,X)\in A[X]$.

\end{man}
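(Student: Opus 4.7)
The plan is to interpret the roots of $\Phi_{J,a}(X,X)$ through the endomorphism structure of CM Drinfeld modules, then perform a weighted count that factors through the class structure of each imaginary order.

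First, by the construction of $\Phi_{J,a}(X,X)$, each root (counted with multiplicity) is the $J$-invariant of a rank-$r$ Drinfeld module $\phi$ over $\mathbb{C}_\infty$ equipped with an $\mathfrak{a}$-cyclic isogeny to an isomorphic copy of itself, equivalently a non-scalar endomorphism. Hence every contributing $\phi$ has complex multiplication, and I would write $\mathrm{End}(\phi)=\mathcal{O}$ for some imaginary $r$-degree order $\mathcal{O}$ in a CM field $K/F$.

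Second, for such $\phi$, I would establish a bijection between its $\mathfrak{a}$-cyclic self-isogenies and the set
$$S(\mathcal{O},a)=\{u\in\mathcal{O}\mid u\ \text{primitive},\ \mathrm{Nm}_{K/F}(u)=c\cdot a\ \text{for some }c\in\F_q^*\}/\mathcal{O}^*.$$
This bijection rests on three standard inputs: (i) the degree ideal of the isogeny induced by $u$ equals $u\mathcal{O}\cap A$, whose norm agrees with $\mathrm{Nm}_{K/F}(u)$ up to an $\F_q^*$-scalar; (ii) the isogeny is $\mathfrak{a}$-cyclic exactly when $u$ is primitive, since any factorization $u=d\beta$ with $d\in A$ a non-unit forces the kernel to contain all of $\phi[d]$; (iii) two elements produce the same self-isogeny (up to automorphism of the target) if and only if they differ by an element of $\mathcal{O}^*$. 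Thus each $\phi$ with $\mathrm{End}(\phi)=\mathcal{O}$ contributes exactly $\gamma(\mathcal{O},a)$ to the root multiplicity of $\Phi_{J,a}(X,X)$.

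Third, I would group roots by endomorphism ring. Analytic uniformization identifies rank-$r$ CM Drinfeld modules with endomorphism ring $\mathcal{O}$ with the class group $\mathrm{Pic}(\mathcal{O})$, and by definition $H_{\mathcal{O},J}(X)$ is the polynomial whose roots are the $J$-invariants of this Galois orbit. Since $\gamma(\mathcal{O},a)$ is intrinsic to $\mathcal{O}$, each root of $H_{\mathcal{O},J}$ picks up the same exponent; assembling over all admissible $\mathcal{O}$ yields the product formula. The Galois invariance then follows because $\mathrm{Gal}(\bar F/F)$ permutes CM Drinfeld modules within each endomorphism-ring class, each $H_{\mathcal{O},J}(X)$ already lies in $A[X]$, and the multiplicity $\gamma(\mathcal{O},a)$ depends only on $\mathcal{O}$ and $a$, so the product remains in $A[X]$.

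The main obstacle I anticipate is parts (ii)–(iii) of the second step: matching primitivity precisely with $\mathfrak{a}$-cyclicity of the kernel, and correctly identifying when two norm-$a$ (up to units) elements yield the same self-isogeny. This requires care when $\mathcal{O}^*$ strictly contains $\F_q^*$ and when the generator $a$ of $\mathfrak{a}$ is only specified up to $\F_q^*$-scaling, so the $\F_q^*$-ambiguity in the norm condition and the $\mathcal{O}^*$-quotient must be tracked simultaneously to avoid over- or undercounting.
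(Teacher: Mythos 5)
Your proposal follows essentially the same route as the paper: the paper's proof likewise identifies the root multiplicities of $\Phi_{J,a}(X,X)$ with pairs $(\phi,u)$ up to isomorphic copy, proves as its key claim that $u\mathcal{O}$ is a cyclic $A$-sublattice isomorphic to $A/(a)$ if and only if $u$ is primitive with ${\rm Nm}_{K/F}(u)=c\cdot a$ for some $c\in\F_q^*$, and then groups roots by endomorphism ring to obtain the product of $H_{\mathcal{O},J}(X)^{\gamma(\mathcal{O},a)}$. The only minor divergence is in the Galois-invariance step, where you assert that each $H_{\mathcal{O},J}(X)$ individually lies in $A[X]$, whereas the paper only establishes $H_{\mathcal{O},J}(X)^\sigma=H_{\mathcal{O}^\sigma,J}(X)$ and uses that the product ranges over all imaginary degree-$r$ orders, so the factors are permuted by $\mathrm{Gal}(\bar F/F)$; either variant suffices.
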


For the isogeny graph side, in the classical setting for elliptic curves with nontrivial endomorphism rings, Kohel \cite{K96} firstly discovered volcano structure for isogeny graph for elliptic curves over finite fields, and hed explained how local endomorphism data at a prime $\ell$ stratifies
$\ell$-isogeny graph into a crater (maximal order) and layered levels beneath it, with a unique upward edge and controlled horizontal behavior on the crater. This idea has been extended to ordinary abelian varieties over finite fields in \cite{BJW17}, and later on generalized by the work of \cite{AMS25}. In another direction, Clark \cite{C22} considered elliptic curves over $\mathbb{C}$ with complex multiplication by a quadratic imaginary field $K$, and he showed that a volcano structure exists under a mild condition on endomorphism ring of CM elliptic curves and driscriminant of $K$. In the function field arithmetic, Caranay \cite{C18} studied the volcano structure for rank-$2$ Drinfeld modules over finite field, reinforcing the analogy between the number field and function field arithmetic. However, it is still unclear whether there are volcano structure lies in isogeny graph for Drinfeld modules of rank $r>2$ over finite fields, and Drinfeld modules over $\mathbb{C}_\infty$ of arbitrary rank $r$  with CM. This motivates us to consider the $\fl$-cyclic isogeny graph for CM Drinfeld modules, and it turns out that there is a component in the isogeny graph that has a ``generalized volcano structure''. Under Definition \ref{lev}, we have the following result:

\begin{man}[Theorem \ref{voc}]

\begin{enumerate}
\item[(i)]
In $\mathcal{G}_{\fl,K}^{\rm min, unif}$, vertex  $[\phi]$ with level $t\geqslant 1$ has only one ascending $\fl$-cyclic isogeny to a vertex $[\phi']$ of level $t-1$ and preserve the $\fl$-comprime factor $\ff_0$. Besides, there is no horizontal or descending edge.

\item[(ii)] For vertex $[\phi]\in \mathcal{G}_{\fl,K}^{\rm min, unif}$ with level $t=0$, the horizontal edge are $\fl$-cyclic isogenies $u:\phi\rightarrow \phi'$ with ${\rm End}(\phi')\cong \cO$, the number of horizontal edges is the number of primes $\fL$ of $\cO_K$ above $\fl$ with inertia degree $f(\fL\mid \fl)=1$ . Note that $[\phi]$ has no descending edges.

\end{enumerate}

\end{man}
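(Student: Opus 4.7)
The plan is to translate the graph-theoretic structure of $\mathcal{G}_{\fl,K}^{\rm min, unif}$ into ideal-theoretic data via the analytic uniformization of rank-$r$ Drinfeld modules over $\mathbb{C}_\infty$. Under this correspondence, a CM Drinfeld module $\phi$ with $\mathrm{End}(\phi) \cong \cO \subseteq \cO_K$ corresponds to a projective rank-$1$ $\cO$-module $\Lambda \subset \mathbb{C}_\infty$, and an $\fl$-cyclic isogeny $u : \phi \to \phi'$ lifts to a sublattice inclusion $\Lambda \hookrightarrow \Lambda'$ with $\Lambda' / \Lambda \cong A / \fl$. The first step is to establish the Drinfeld-module analogue of Kohel's trichotomy: for any such $\fl$-cyclic isogeny, the pair $(\cO, \cO') = (\mathrm{End}(\phi), \mathrm{End}(\phi'))$ satisfies exactly one of $\cO \subsetneq \cO'$ (ascending), $\cO = \cO'$ (horizontal), or $\cO \supsetneq \cO'$ (descending); moreover, in the non-horizontal cases the conductor changes by a single factor of $\fl$, so the $\fl$-coprime factor $\ff_0$ is always preserved.

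For part (i), suppose $\phi$ has level $t \geqslant 1$, so $\mathrm{cond}(\cO) = \fl^t \ff_0$. Existence and uniqueness of the ascending edge follow from the bijection between orders in $K$ and their conductor ideals in $A$: there is exactly one order $\cO'$ strictly containing $\cO$ with conductor $\fl^{t-1} \ff_0$, and the lattice $\Lambda' := \cO' \cdot \Lambda$ sits above $\Lambda$ with $\Lambda' / \Lambda \cong A / \fl$, producing the unique ascending $\fl$-cyclic isogeny, which visibly preserves $\ff_0$. The absence of horizontal edges is the decisive local statement: a horizontal edge would require a proper invertible $\cO$-ideal of norm $\fl$, but since $\fl \mid \mathrm{cond}(\cO)$ the localization $\cO_\fl$ fails to be Dedekind and the primes of $\cO$ above $\fl$ are non-invertible, so no such ideal exists. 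Descending edges are ruled out by the \emph{minimality} convention built into $\mathcal{G}_{\fl,K}^{\rm min, unif}$, which restricts the component to endomorphism rings whose conductor divides $\fl^t \ff_0$.

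For part (ii), with $t = 0$ we have $\mathrm{cond}(\cO) = \ff_0$ coprime to $\fl$, so $\cO_\fl = (\cO_K)_\fl$ is Dedekind. Writing $\fl \cO_K = \prod_i \fL_i^{e_i}$, each prime $\fL$ above $\fl$ yields an invertible $\cO$-ideal $\fL \cap \cO$ with quotient $\cO / (\fL \cap \cO) \cong \cO_K / \fL$. When $f(\fL \mid \fl) = 1$ this residue field equals $A / \fl$, and the sublattice $\Lambda \subset \fL^{-1} \Lambda$ has cokernel $A/\fl$, producing a horizontal $\fl$-cyclic isogeny $\phi \to \phi^{\fL}$ with $\mathrm{End}(\phi^{\fL}) = \cO$. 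Conversely, primes with $f(\fL \mid \fl) \geqslant 2$ have residue field strictly containing $A/\fl$, so the corresponding kernel is not $\fl$-cyclic. Again, the absence of descending edges at the crater follows from the \emph{minimality} convention on $\mathcal{G}_{\fl,K}^{\rm min, unif}$.

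The main obstacle is the rigorous establishment of the trichotomy itself in the rank-$r$ Drinfeld setting, with precise control over how the conductor of $\mathrm{End}(\phi')$ interacts with $\fl$ through an $\fl$-cyclic isogeny. Showing that an ascending isogeny must land in the unique next conductor level, and that only one such isogeny exists there, requires a careful module-theoretic analysis of $\Lambda$ over the pair $(\cO, \cO')$ — the higher rank means $\Lambda$ is no longer rank-$2$ over $A$, so the standard rank-$2$ arguments of Caranay do not transfer verbatim. Moreover, the sharp residue-field comparison in part (ii) depends on controlling exactly when $\cO_K / \fL$ equals $A / \fl$ rather than merely containing it, which pins down inertia degree $1$ as the correct condition and is the key place where higher-rank combinatorics enters.
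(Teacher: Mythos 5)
Your framework (CM uniformization, translating $\fl$-cyclic isogenies into $\cO$-ideals $\fa$ with $\cO/\fa\cong A/\fl$) is the same as the paper's, and your treatment of part (ii) — invertible ideals above $\fl$, residue field $\cO_K/\fL = A/\fl$ exactly when $f(\fL\mid\fl)=1$ — matches the paper's appeal to its classification of horizontal $\fl^m$-isogenies. But there are two genuine gaps in part (i). First, you never prove \emph{uniqueness of the edge}. You argue that there is a unique target order $\cO'$ with conductor $\fl^{t-1}\ff_0$, invoking a ``bijection between orders in $K$ and their conductor ideals.'' That bijection is false for $r>2$ — the paper exhibits a counterexample with $A+\ff\cO_K\subsetneq\cO\subsetneq\cO_K$, which is precisely why it restricts to \emph{minimal} orders — and, more importantly, uniqueness of the target order does not rule out several distinct $\fl$-cyclic isogenies (multi-edges) landing there. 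What must be shown is that $\fa=\fl A+\ff\cO_K$ is the \emph{only} $\cO$-ideal with $\cO/\fa\cong A/\fl$; the paper gets this from the fact that the localization $A_\fl+\prod_{\fL\mid\fl}\fL^{t}\cO_{K,\fL}$ is a local ring with maximal ideal $\mathfrak{m}_t=\fl A_\fl+\prod_{\fL\mid\fl}\fL^{t}\cO_{K,\fL}$, so any $z\in\cO\setminus\fa$ generates an ideal coprime to $\fl$. Your proposal contains no substitute for this step.

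Second, the absence of descending edges is a theorem, not a convention. The graph $\mathcal{G}_{\fl,K}^{\rm min,unif}$ contains vertices of \emph{every} uniform level, so an edge from level $t$ to level $t+1$ is not excluded by the definition; it must be ruled out by showing that the unique cyclic $\fl$-isogeny out of $\phi$ has $(\fa:\fa)\supsetneq\cO$ rather than $\subsetneq\cO$. This is also where your asserted ``trichotomy with conductor dropping by a single factor of $\fl$'' needs proof: the paper computes $(\mathfrak{m}_t:\mathfrak{m}_t)=A_\fl+\prod_{\fL\mid\fl}\fL^{\max(t-1,0)}\cO_{K,\fL}$ via a nontrivial argument (reduction modulo the $\fL_i$ and a Hensel lifting to identify the diagonal component), and this computation is exactly what guarantees the edge ascends by one uniform level and preserves $\ff_0$. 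You correctly identify this local analysis as ``the main obstacle,'' but the proposal leaves it unaddressed, and the two points above are where the proof would actually fail without it.
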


We also show in Example \ref{r3case} the shape of volcano structure under a given split type of $\fl$ in $\cO_K$ for the rank-$3$ case.
\section*{Acknowledgement}

Part of this work was carried out during the author's visit to Prof. Florian Breuer at the Department of Mathematics, University of Newcastle. The author is grateful to the department for its warm hospitality and to Prof. Breuer for many insightful discussions. The author also thanks Prof. Fu-Tsun Wei for his valuable suggestions.

\section{Preliminaries}

Let $A=\F_q[T]$ be the polynomial ring over finite field with $q=p^e$ an odd prime power, $F=\F_q(T)$ be the fractional field of $A$, and  $K$ be a finite extension over $F$. Throughout this paper, ``$\log$'' refers to the logarithm with base $q$. We view $K$ as an $A$-field, which is a field equipped with a homomorphism $\gamma: A\rightarrow K$. The {\bf{$A$-characteristic}} of $K$ is defined to be the kernel of $\gamma$. When ${\rm ker}(\gamma)=0$ , we say $K$ is of generic characteristic  

\subsection{Drinfeld modules}

\begin{defi}

Let $K<x>:=\left\{\sum_{i=0}^{n}c_ix^{q^i}\mid c_i\in K\right\}$. Define $(K<x>,+,\circ)$ to be the ring of twisted $q$-polynomials with usual addition, and the multiplication is defined to be composition of $q$-polynomials.

\end{defi}

\begin{defi}
A Drinfeld $A$-module of rank $r$ over $K$ of generic characteristic is a ring homomorphism

$$\phi: A\rightarrow K<x>, \ \ a\mapsto \phi_a(x) $$
determined by
$$\phi_T(x)=Tx+g_1x^q+\cdots+g_rx^{q^r}.$$
\end{defi}

For an ideal $\mathfrak{a}=<a>$ of $A$, we may define the $\mathfrak{a}$-torsion of the Drinfeld module $\phi$ over $K$.

\begin{defi}
The $\mathfrak{a}$-torsion of a Drinfeld module $\phi$ over $K$ is defined to be
$$\phi[\mathfrak{a}]:=\left\{\textrm{ zeros of }\phi_a(x) \textrm{ in } \bar{K} \right\}\subset \bar{K}.$$
\end{defi}

Now we define the $A$-module structure on $\bar{K}$. For any elements $b\in A$ and $\alpha\in \bar{K}$. We define the $A$-action of $b$ on $\alpha$ via
$$b\cdot\alpha:=\phi_b(\alpha).$$
This gives $\bar{K}$ an $A$-module structure. And the $A$-module structure inherits to $\phi[\mathfrak{a}]$. As our Drinfeld module $\phi$ over $K$ has generic characteristic, we have the following proposition

\begin{prop}\label{prop0.2}
Let $\phi$ be a rank $r$ Drinfeld module over $K$ and $\mathfrak{a}$ a non-zero ideal of $A$,
 If $\phi$ has $A$-characteristic prime to $\mathfrak{a}$, then the $A/\mathfrak{a}$-module $\phi[\mathfrak{a}]$ is free of rank $r$
\end{prop}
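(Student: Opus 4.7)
The plan is to reduce the statement to the case where $\mathfrak{a}$ is a prime power ideal, treat that case by combining a cardinality count with the structure theorem for modules over a PID, and then reassemble via the Chinese Remainder Theorem. Since $A=\F_q[T]$ is a PID, I may write $\mathfrak{a}=(a)$ for some nonzero $a\in A$, and factor $a=\prod_i p_i^{e_i}$ with the $p_i$ distinct monic irreducibles. Because $\phi_{p_i^{e_i}}$ and $\phi_{p_j^{e_j}}$ generate coprime ideals of $A$ acting on $\phi[\mathfrak{a}]$, a standard CRT argument gives
$$\phi[\mathfrak{a}]\;\cong\;\bigoplus_i \phi[p_i^{e_i}]$$
as $A$-modules, so it suffices to treat each $\phi[p^n]$ separately.

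The next step is a cardinality and separability count. If $\phi_T(x)=Tx+g_1x^q+\cdots+g_rx^{q^r}$, then for any $b\in A$ the twisted polynomial $\phi_b$ has degree $r\deg(b)$ in $\tau$, hence degree $q^{r\deg(b)}$ in $x$, and its constant term (as a polynomial in $x$) is $\gamma(b)$. Applied to $b=a$, the hypothesis that the $A$-characteristic is prime to $\mathfrak{a}$ forces $\gamma(a)\neq 0$, so the formal derivative $\phi_a'(x)=\gamma(a)$ is nonzero and $\phi_a$ is separable. Therefore
$$|\phi[a]|\;=\;\deg_x\phi_a\;=\;q^{r\deg a}\;=\;|A/\mathfrak{a}|^{\,r}.$$
By the same reasoning applied to any divisor $b\mid a$, the hypothesis is inherited and $\phi[b]$ has $|A/(b)|^r$ elements.

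The main work, and the key step, is the structure of $\phi[p^n]$ for $p$ irreducible. It is a finite $A/(p^n)$-module, and $A/(p^n)$ is a local principal ideal ring with residue field $\F_p:=A/(p)$ of size $q^{\deg p}$. The structure theorem gives
$$\phi[p^n]\;\cong\;\bigoplus_{i=1}^{s} A/(p^{n_i}),\qquad 1\leqslant n_i\leqslant n.$$
Comparing cardinalities gives $\sum_i n_i=rn$, and picking off the $p$-torsion yields $\phi[p^n][p]=\phi[p]$, which by the count above has $q^{r\deg p}$ elements, so $s=r$. Combined with $n_i\leqslant n$, the constraint $\sum_i n_i = rn$ forces $n_i=n$ for every $i$, i.e.\ $\phi[p^n]\cong (A/(p^n))^r$. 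Reassembling via CRT then gives $\phi[\mathfrak{a}]\cong(A/\mathfrak{a})^r$.

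The main obstacle is really step 3: one must verify that the hypothesis on the $A$-characteristic is inherited by every divisor $b$ of $a$ (so that the count $|\phi[b]|=|A/(b)|^r$ is valid), and then juggle the two numerical invariants $\sum n_i=rn$ and $s=r$ to pin down the summand structure uniquely. Everything else is formal: the CRT reduction at the start, the separability-via-derivative computation, and the final reassembly are standard once one is careful that the separability hypothesis propagates through the reductions.
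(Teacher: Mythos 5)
Your proof is correct: the CRT reduction to prime powers, the separability count $|\phi[a]|=q^{r\deg a}$ via the derivative $\phi_a'(x)=\gamma(a)\neq 0$, and the structure-theorem argument pinning down $\phi[p^n]\cong (A/(p^n))^r$ from the two invariants $\sum n_i=rn$ and $s=r$ all go through, including the observation that the non-vanishing of $\gamma$ is inherited by divisors. The paper itself offers no argument here — it simply cites Goss, Proposition 4.5.7 — and your write-up is essentially the standard proof given in that reference, so it is a faithful self-contained replacement.
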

\begin{proof}
See \cite{G96} Proposition 4.5.7.
\end{proof}

\begin{defi}
Let $\phi$ and $\psi$ be two rank-$r$ Drinfeld $A$-modules over $K$. A {{morphism}} $u:\phi\rightarrow \psi$ over $K$ is a twisted $q$-polynomial $u\in K<x>$ such that
$$u\phi_a=\psi_a u \text{ \rm for all } a\in A.$$ 
A non-zero morphism $u:\phi\rightarrow \psi$ is called an isogeny. A morphism $u:\phi\rightarrow \psi$ is called an {{isomorphism}} if its inverse exists. 
\end{defi}

Set ${\rm Hom}_K(\phi,\psi)$ to be the group of all morphisms $u:\phi\rightarrow \psi$ over $K$. We denote ${\rm End}_K(\phi)={\rm Hom}_K(\phi,\phi)$. For any field extension $L/K$, we define
$${\rm Hom}_L(\phi,\psi)=\{u\in L<x> \mid u\phi_a=\psi_a u  \text{ \rm for all } a\in A \}.$$
For $L=\bar{K}$, we omit subscripts and write

$${\rm Hom}(\phi,\psi):={\rm Hom}_{\bar{K}}(\phi,\psi) \text{ \rm and } {\rm End}(\phi):={\rm End}_{\bar{K}}(\phi)$$

\begin{rem}
Given an isogeny $u:\phi\rightarrow \psi$ between two rank-$r$ Drinfeld modules over $K$, let $c,d$ be a non-zero elements in $\bar{K}$. By taking conjugation action of $c$ on $\psi$, one gets an isogeny $c^{-1}u$ from $\phi$ and $c^{-1}\psi c$. Similarly, $u d$ is an isogeny from $d^{-1}\phi d$ to $\psi$. Therefore, we say an isogeny of the form $c^{-1}uc:c^{-1}\phi c\rightarrow c^{-1}\psi c$ is an {\bf isomorphic copy} of $u:\phi \rightarrow \psi$.
\end{rem}

\begin{defi}
We call a finite extension $L/F$ {{imaginary}} if there is a unique place $\tilde{\infty}$ of $L$ over $\infty$.

\end{defi}

\begin{defi}
\ \\
\begin{enumerate}
\item[$\bullet$]The composition of morphisms makes ${\rm End}_L(\phi)$ into a subring of $L<x>$, called the {\bf{endomorphism ring}} of $\phi$ over $L$. For any rank-$r$ Drinfeld module $\phi$ over $K$ with ${\rm End}(\phi)=A$, we say that $\phi$ does not have complex multiplication.

\item[$\bullet$] Let $K/F$ be an imaginary degree-$r$ extension with ring of integers $\cO_K$, a rank-$r$ Drinfeld module $\phi$ is said to be CM by $K$ if ${\rm End}(\phi)$ is isomorphic to an order of $K$. And we say $\phi$ is CM by an order $\cO$ of $K$ if ${\rm End}(\phi)\cong \cO$.
\end{enumerate}
\end{defi}

\begin{defi}
Let $f:\phi\rightarrow \psi$ be an isogeny of Drinfeld modules over $K$ of rank $r$, we define the degree of $f$ to be
$$\deg f:=\# {\rm{ker}}(f).$$

\end{defi}

\begin{prop}\label{dual}
Let $f:\phi\rightarrow \psi$ be an isogeny of Drinfeld modules over $K$ of rank $r$. There exists a dual isogeny $\hat{f}:\psi\rightarrow \phi$ such that $$f\circ\hat{f}=\psi_a \textrm{ and } \hat{f}\circ f=\phi_a.$$
Here  $0\neq a\in A$ is an element of minimal $T$-degree such that ${\rm ker}(f)\subset \phi[a]$. 

\end{prop}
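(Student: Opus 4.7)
The plan is to construct $\hat{f}$ via right Euclidean division of $\phi_a$ by $f$ inside the twisted polynomial ring $K<x>$, then extract the two composition identities by right-cancelling $f$ in that ring, which has no zero divisors under composition.

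\textbf{Identifying $a$.} The kernel $\ker(f)\subset\bar{K}$ is finite and stable under the $\phi$-action, since the relation $f\circ\phi_b=\psi_b\circ f$ forces $\phi_b$ to preserve $\ker(f)$ for every $b\in A$. Hence $\ker(f)$ is a finite $A$-module via $\phi$. As $A$ is a PID and $\ker(f)$ is finite, its annihilator is a non-zero principal ideal whose monic generator is precisely the element $a$ of smallest $T$-degree with $\ker(f)\subseteq\phi[a]$.

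\textbf{Building $\hat{f}$.} Right Euclidean division in $K<x>$ gives $\phi_a=\hat{f}\circ f+r$ with $r=0$ or $\deg_\tau r<\deg_\tau f$. For every $\xi\in\ker(f)$ we have $r(\xi)=\phi_a(\xi)-\hat{f}(f(\xi))=0$. To eliminate $r$ I would invoke separability: writing $f=c_sx^{q^s}+\cdots$ with $c_s\neq 0$ and comparing lowest-order terms in the intertwining relation $f\phi_T=\psi_T f$ yields $c_s(\gamma(T)^{q^s}-\gamma(T))=0$, and since $\gamma$ is injective in generic characteristic this forces $s=0$. Thus $f$ is separable, $|\ker(f)|=q^{\deg_\tau f}$, and the polynomial $r$ would otherwise have more zeros than its ordinary degree allows; hence $r=0$ and $\phi_a=\hat{f}\circ f$. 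In particular $\hat{f}\neq 0$.

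\textbf{Verifying the two identities.} From $\phi_a\phi_b=\phi_b\phi_a$, substituting $\phi_a=\hat{f}f$ and using $f\phi_b=\psi_b f$ gives $(\hat{f}\psi_b)f=(\phi_b\hat{f})f$. Since $K<x>$ has no zero divisors under composition, right-cancelling the non-zero $f$ yields $\hat{f}\psi_b=\phi_b\hat{f}$ for all $b\in A$, so $\hat{f}:\psi\to\phi$ is a morphism. For the mirror composition, left-composing $\hat{f}f=\phi_a$ with $f$ gives $(f\hat{f})f=f\phi_a=\psi_a f$, and right-cancellation concludes $f\circ\hat{f}=\psi_a$.

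The main obstacle is the vanishing of the remainder $r$: it rests on separability of $f$, which is automatic in generic $A$-characteristic via the coefficient comparison above but would be more delicate outside of that setting, where one would have to work scheme-theoretically with $\ker(f)$ as a finite subgroup scheme and factor $\phi_a$ through the Hopf-algebra quotient by the universal property. Once $r=0$ is secured, everything else is formal manipulation inside $K<x>$.
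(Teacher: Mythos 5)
Your proof is correct and complete; the paper itself offers no argument, simply citing Goss (Proposition 4.7.13 and Corollary 4.7.14), and your construction — right Euclidean division of $\phi_a$ by $f$ in $K<x>$, killing the remainder by counting roots of the separable $f$, then right-cancelling $f$ to get the intertwining relations and $f\circ\hat f=\psi_a$ — is essentially the standard argument from that reference. The one hypothesis you correctly isolate (separability of $f$, automatic here since $K$ is of generic characteristic) is exactly what the root-counting step needs, so nothing is missing.
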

\begin{proof}
See \cite{G96} Proposition 4.7.13 and Corollary 4.7.14.

\end{proof}

\subsection{CM uniformization and ideal-isogeny construction}

Let $\phi$ be a rank-$r$ Drinfeld modules with CM by an imaginary degree-$r$ order $\cO\subset\cO_K$. Fix an embedding $K\hookrightarrow \mathbb{C}_\infty$, one can view $\phi$ as a rank-$1$ Drinfeld $\cO$-module, hence there is a fractional  $\cO$-ideal $I\subset K$ such that the following commutative diagram holds for $a\in \cO$:

\[\begin{tikzcd}
	{\mathbb{C}_\infty/I} & {\mathbb{C}_\infty/I} \\
	{\mathbb{C}_\infty} & {\mathbb{C}_\infty}
	\arrow["a", from=1-1, to=1-2]
	\arrow["{e_I}"', from=1-1, to=2-1]
	\arrow["{e_I}", from=1-2, to=2-2]
	\arrow["{\phi_{I,a}}"', from=2-1, to=2-2]
\end{tikzcd}\]

Here the top right arrow is multiplication-by-$a$ map, $e_I$ is the Drinfeld exponential map, and $\phi_{I,a}=\phi_a$. Besides, the endomorphism ring of $\phi_I$ is ${\rm End}(\phi_I)=\{x\in K \mid xI\subset I\}=\cO$, and an isomorphism between rank-$1$ Drinfeld $\cO$-modules $\phi_{I}\cong\phi_{I'}$ makes the corresponding fractional ideal $I$ and $I'$ satisfy $I=uI'$ for some $u\in K^*$.

Let $\fa$ be an $\cO$-ideal, set $\fa^{-1}:=\{x\in K\mid x\fa\subset \cO\}$ and consider the fractional ideal $\fa^{-1}I$. The correspond Drinfeld module $\phi_{\fa^{-1}I}$ induces an isogeny $\iota_\fa:\phi\rightarrow\phi_{\fa^{-1}I},$ hence we define $\phi_\fa:=\phi_{\fa^{-1}I}$. Moreover, ${\rm ker}(\iota_\fa)=\phi[\fa]\cong\cO/\fa$ as $\cO$-modules, and the endomorphism ring of $\phi_\fa$ is $${\rm End}(\phi_\fa)=\{x\in K\mid x(\fa^{-1}I)\subset\fa^{-1}I\}=\{x\in K\mid x\fa\subset\fa\}=(\fa:\fa).$$

On the other hand, we define an action of elements $[\fb]$ in Picard group ${\rm Pic}(\cO)$ on the set $$\left\{[\phi_I]: \mathbb{C}_\infty\textrm{-isomorphism classes of rank-$1$ Drinfeld $\cO$-modules } \right\}$$ via $[\fb][\phi_I]:=[\phi_{\fb^{-1}I}]$. One can check that the action is well-defined.

\subsection{Modular polynomials}

\begin{defi}
We take the definition of $J$-invariants of higher rank Drinfeld modules introduced by Potemine \cite{P98}. Given a rank-$r$ Drinfeld module $\phi$ over an arbitrary $A$-field $K$, whose $A$-characteristic is equal to zero, with
$$\phi_T=T+g_1\tau+\cdots+g_{r-1}\tau^{r-1}+g_r\tau^r, \textrm{ where }g_i\in K \textrm{ and }g_r\in K^*.$$
We define a basic $J$-invariant of $\phi$ to be
$$J^{(\delta_1,\cdots,\delta_{r-1})}(\phi)=\frac{g_1^{\delta_1}\cdots g_{r-1}^{\delta_{r-1}}}{g_r^{\delta_r}},$$
where $\delta_i$'s satisfy the following two conditions:
\begin{equation}
 \delta_1(q-1)+\delta_2(q^2-1)+\cdots+\delta_{r-1}(q^{r-1}-1)=\delta_r(q^r-1). \tag{1}
\end{equation}

\begin{equation}
0\leqslant \delta_i \leqslant \frac{q^r-1}{q^{{\rm{g.c.d.}}(i,r)}-1} \textrm{ for all }1\leqslant i\leqslant r-1;\ \ \text{ g.c.d.$(\delta_1,\cdots,\delta_r)=1.$} \tag{2}
\end{equation}

\end{defi}
\begin{defi}\label{basicj}
We denote by $\left\{  J^{(\delta_1,\cdots, \delta_{r-1})}    \right\}$ the set of all basic $J$-invariants.

\end{defi}

\begin{prop}
$$M^r(1)={\rm Spec} A\left[\left\{  J^{(\delta_1,\cdots, \delta_{r-1})}    \right\}\right]$$ is the coarse moduli scheme of Drinfeld $A$-modules of rank $r$. Here $A\left[\left\{  J^{(\delta_1,\cdots, \delta_{r-1})}    \right\}\right]$ is the ring generated by  $\left\{  J^{(\delta_1,\cdots, \delta_{r-1})}    \right\}$ over $A$. Any $0\neq J\in A\left[\left\{  J^{(\delta_1,\cdots, \delta_{r-1})}    \right\}\right]$ is called an invariant
\end{prop}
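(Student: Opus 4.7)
The plan is to realize the moduli problem as a $\mathbb{G}_m$-quotient of an affine parameter space and then identify the $\mathbb{G}_m$-invariant subring with $A[\{J^{(\delta_1,\ldots,\delta_{r-1})}\}]$. The proposition is essentially the content of Potemine's original construction in \cite{P98}, so my goal is to outline the structure of his argument rather than reprove it in full.

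First I would set up the parameter space. A rank-$r$ Drinfeld $A$-module in generic characteristic is specified by $\phi_T=T+g_1\tau+\cdots+g_{r-1}\tau^{r-1}+g_r\tau^r$ with $g_r\ne 0$, so the functor of rank-$r$ Drinfeld modules equipped with this standard presentation is represented by
$$U_r := {\rm Spec}\,A[g_1,\ldots, g_{r-1}, g_r, g_r^{-1}].$$
A change of coordinates $x\mapsto c^{-1}x$ with $c\in \bar K^\ast$ conjugates $\phi_T$ into the Drinfeld module with coefficients $(c^{1-q}g_1,\, c^{1-q^2}g_2,\, \ldots,\, c^{1-q^r}g_r)$, and any $\bar K$-isomorphism of rank-$r$ Drinfeld modules arises this way. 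Hence $\mathbb{G}_m$ acts on $U_r$ by $c\cdot(g_1,\ldots,g_r)=(c^{1-q}g_1,\ldots,c^{1-q^r}g_r)$, and $\bar F$-isomorphism classes of rank-$r$ Drinfeld modules are in bijection with $\mathbb{G}_m(\bar F)$-orbits on $U_r(\bar F)$.

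Next I would compute the invariant ring. The action is diagonal on monomials: $g_1^{\delta_1}\cdots g_{r-1}^{\delta_{r-1}}g_r^{-\delta_r}$ has $\mathbb{G}_m$-weight $\sum_{i=1}^{r-1}\delta_i(1-q^i)+\delta_r(q^r-1)$, which vanishes precisely under relation (1). Therefore
$$R:=A[g_1,\ldots,g_{r-1},g_r^{\pm 1}]^{\mathbb{G}_m}$$
is the monoid $A$-algebra of the semigroup $\Lambda$ of lattice points $(\delta_1,\ldots,\delta_{r-1},\delta_r)$ with $\delta_1,\ldots,\delta_{r-1}\geq 0$ and $\delta_r\in\mathbb{Z}$ satisfying (1). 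The key combinatorial claim, whose proof I would borrow from \cite{P98}, is that the primitive points in $\Lambda$ constrained by the bounds (2) form a Hilbert basis for $\Lambda$: every weight-zero monomial decomposes as a product of basic $J$-invariants. The upper bounds $0\leq \delta_i \leq (q^r-1)/(q^{\gcd(i,r)}-1)$ are the minimal ones making the generating set irredundant, while $\gcd(\delta_1,\ldots,\delta_r)=1$ rules out obvious powers of smaller invariants.

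Finally, the standard GIT argument for an affine scheme with a linearly reductive $\mathbb{G}_m$-action gives that ${\rm Spec}\,R$ is the categorical quotient of $U_r$ by $\mathbb{G}_m$; combined with Drinfeld's representability of the rank-$r$ moduli stack, this identifies ${\rm Spec}\,R$ with the coarse moduli scheme $M^r(1)$. The universal property follows because any $\mathbb{G}_m$-invariant morphism $U_r\to Z$ factors uniquely through ${\rm Spec}\,R$, which translates into the required coarse moduli factorization for any scheme $Z$ receiving the isomorphism-class map from the stack. The main obstacle is the combinatorial Hilbert-basis step identifying $R$ with $A[\{J^{(\delta_1,\ldots,\delta_{r-1})}\}]$: this is the toric-semigroup content proved in detail in Potemine \cite{P98}, which I would invoke as a black box rather than reprove.
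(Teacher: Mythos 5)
Your proposal is correct and ultimately rests on the same source as the paper: the paper's proof is simply a citation of Potemine \cite{P98}, Theorem 3.1, and your outline (the $\mathbb{G}_m$-action $g_i\mapsto c^{1-q^i}g_i$ on the coefficient space, the weight-zero condition recovering relation (1), and the GIT quotient) is a faithful sketch of that argument, with the key Hilbert-basis identification of the invariant ring deferred to the same reference. Since both proofs reduce to invoking Potemine's theorem, there is nothing substantive to add.
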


\begin{proof}
See \cite{P98}, Theorem 3.1.

\end{proof}

From Proposition 2.6 in \cite{BR16}, we can define modular polynomials as the following:

\begin{defi}
Any isogeny $f$ of a Drinfeld $A$-module $\phi$ defined over an $A$-field $K$ satisfying ${\rm ker} f\subset \phi[N]$ is called an $N$-isogeny. Let $K_N$ be the splitting field of $\phi[N]$ over $K$, and $R_N$ be the integral closure of $\mathcal{O}_K$ in $K_N$, then such an isogeny $f$ is defined over $R_N$.

\end{defi}
Let $I_N$ be the set of all monic $N$-isogenies $f\in R_N(X)$, and $J$ be an invariant. Define the full modular polynomial of level $N$ associated to $J$ by $$\Phi_{J,N}(X):=\prod_{f\in I_N}(X-J(\phi^{(f)}))\in R_N[X].$$

Here $\phi^{(f)}$ is the Drinfeld module corresponding to the monic isogeny $f:\phi\rightarrow \phi^{(f)}$.

\begin{defi}

Let $H\subset \phi[N]\cong(A/NA)^r$ be an $A$-submodule. Any isogeny $f\in I_N$ where ${\rm ker} f$ is a $\GL_r(A/NA)$-orbit of $H$ is called an isogeny of type $H$. Let $J$ be an invariant, define the modular polynomial of type $H$ associated to $J$ by $$\Phi_{J,H}(X):=\prod_{f\in I_N \textrm{ of type }H}(X-J(\phi^{(f)}))\in R_N[X].$$

\end{defi}

On the other hand, we define a bivariate modular polynomial for Drinfeld modules of arbitrary rank.
\begin{defi}
Given a rank $r$ and $H\subset (A/NA)^r$ a submodule. Any monic isogeny $f:\phi\rightarrow \psi$ between rank-$r$ Drinfeld modules defined over $\mathbb{C}_\infty$ with ${\rm ker} f$ is a $\GL_r(A/NA)$-orbit of $H$ is called an isogeny of type $H$. 

Let $J$ be an invariant, define $\Phi_{J,H}(X,X)\in \mathbb{C}_\infty[X]$ to be a polynomial such that $\phi_{J,H}(a,a)=0 \textrm{ if and only if }$
\textrm{$\exists$ Drinfeld modules $\phi$ over $\mathbb{C}_\infty$ with $J(\phi)=a$, a monic isogeny $f:\phi\rightarrow \phi$ of type $H$ , and up to isomorphic copy $c^{-1}fd$ of $f$.} 

\end{defi}

\begin{defi}
If a $N$-isogeny $f:\phi\rightarrow \psi$ has ${\rm ker} f\cong A/N$ we say $f$ is a cyclic isogeny. As $N=(n)$ is an ideal in $A$, we define
$$\Phi_{J,N}(X,X)=\Phi_{J,n}(X,X):=\Phi_{J,A/N}(X,X).$$

\end{defi}

\section{Computation on $\mathfrak{a}$-cyclic modular polynomial}

\subsection{Direct computation on $\Phi_{J,T}(X,X)$}

Fix a rank $r$, consider a $T$- cyclic self isogeny $f(X)=a_0X+X^q$ of a Drinfeld module $\phi_T(X)=TX+g_1X^q+\cdots+g_{r-1}X^{q^{r-1}}+\Delta X^{q^r}$, with dual isogeny $\hat{f}=b_0X+b_1X^q+\cdots+b_{r-2}X^{q^{r-2}}+\Delta X^{q^{r-1}}$ such that
\begin{equation}\tag{$\star$}
f\circ \hat{f}=\hat{f}\circ f=\phi_T(X).
\end{equation}
We compute
\begin{enumerate}
\item $$f\circ \hat{f}=a_0b_0X+\sum_{i=1}^{r-2}[a_0b_i+b_{i-1}^q]X^{q^i}+[a_0\Delta+b_{r-2}^q]X^{q^{r-1}}+\Delta^qX^{q^r}.$$

\item $$\hat{f}\circ f=b_0a_0X+\sum_{i=1}^{r-2}[b_ia_0^{q^i}+b_{i-1}]X^{q^i}+[\Delta a_0^{q^{r-1}}+b_{r-2}]X^{q^{r-1}}+\Delta X^{q^r}.$$
\end{enumerate}

Therefore, we have the following from comparing coefficients

\begin{equation}\label{1}
    \begin{cases}
      a_0b_0=b_0a_0\\
     a_0b_i+b_{i-1}^q=b_ia_0^{q^i}+b_{i-1},{\quad 1\leqslant i\leqslant r-2}\\
     a_0\Delta+b_{r-2}^q=\Delta a_0^{q^{r-1}}+b_{r-2}\\
     \Delta^q=\Delta
    \end{cases}\,.
\end{equation}

From the system (1) and the equality ($\star$), one can deduce that

\begin{equation}\label{2}
    \begin{cases}
      b_0=T/a_0\\
    b_i=\frac{1}{a_0^{q^i}-a_0}[b_{i-1}^q-b_{i-1}],{\quad 1\leqslant i\leqslant r-2}\\
     \Delta=\frac{1}{a_0^{q^{r-1}}-a_0}[b_{r-2}^q-b_{r-2}]\\
     \Delta^{q-1}=1
    \end{cases}\,.
\end{equation}

In conclusion, we have the following Proposition:

\begin{prop}\label{Tcyclic}
Given a Drinfeld module $\phi_T(X)=TX+g_1X^q+\cdots+g_{r-1}X^{q^{r-1}}+\Delta X^{q^r}$, over $K$ of $A$-characteristic $0$ , with $\Delta\in\F_q^*$. If $\phi$ has a $T$-cyclic self isogeny $f(X)=a_0X+X^q$, then there exists a polynomial $g(X,\Delta)\in A[X]$ such that $a_0$ is a root of $g(X,\Delta)$. \end{prop}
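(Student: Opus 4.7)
The plan is to eliminate the auxiliary unknowns $b_1,\dots,b_{r-2}$ in the system (\ref{2}) and collapse it into a single polynomial equation satisfied by $a_0$ with coefficients in $A$.

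First, I would prove by induction on $i$ that, for $0\leqslant i\leqslant r-2$, the quantity $b_i$ is a rational function
$$b_i \;=\; \frac{P_i(a_0)}{Q_i(a_0)}, \qquad P_i,\,Q_i\in A[X].$$
The base case $b_0=T/a_0$ sets $P_0=T$ and $Q_0=X$. For the inductive step, starting from $b_i=(b_{i-1}^q-b_{i-1})/(a_0^{q^i}-a_0)$ and using the fact that $\alpha\mapsto\alpha^q$ is a ring endomorphism on $A[a_0]$ (since $q$ is a $p$-power), a short computation yields the explicit pair
$$P_i \;=\; P_{i-1}^q \;-\; P_{i-1}\,Q_{i-1}^{q-1},\qquad Q_i\;=\;Q_{i-1}^q\bigl(a_0^{q^i}-a_0\bigr),$$
both of which lie in $A[X]$.

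Second, I would substitute $b_{r-2}=P_{r-2}(a_0)/Q_{r-2}(a_0)$ into the third line of (\ref{2}), namely $\Delta\bigl(a_0^{q^{r-1}}-a_0\bigr)=b_{r-2}^q-b_{r-2}$, and clear denominators by multiplying through by $Q_{r-2}(a_0)^q$. Since $\Delta\in\F_q^*\subset A$, the resulting identity
$$\Delta\bigl(a_0^{q^{r-1}}-a_0\bigr)\,Q_{r-2}(a_0)^q \;=\; P_{r-2}(a_0)^q \;-\; P_{r-2}(a_0)\,Q_{r-2}(a_0)^{q-1}$$
defines a polynomial $g(X,\Delta)\in A[X]$ (with $T$ and $\Delta$ absorbed as coefficients in $A$) such that $g(a_0,\Delta)=0$.

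The main obstacle is guaranteeing that $g(X,\Delta)$ is not identically zero, so that it genuinely cuts out a finite set of possible values for $a_0$. I would handle this by tracking the $X$-degree through the recursion: each step multiplies $\deg Q_i$ roughly by $q$, so the term $\Delta\,a_0^{q^{r-1}}\,Q_{r-2}(a_0)^q$ has strictly larger $X$-degree than any contribution from the right-hand side, preventing cancellation. The rank-$2$ and rank-$3$ cases can be verified by hand as a sanity check, after which the inductive degree bookkeeping carries the conclusion to arbitrary $r$.
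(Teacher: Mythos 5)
Your proposal takes essentially the same route as the paper's proof: iterate the recursion $b_i=(b_{i-1}^q-b_{i-1})/(a_0^{q^i}-a_0)$ down to the final equation involving $\Delta$, then clear denominators to land in $A[X]$. The explicit recursion for the numerators $P_i$ and denominators $Q_i$, and the degree bookkeeping showing the resulting $g(X,\Delta)$ is not identically zero (its top term $\Delta X^{q^{r-1}}Q_{r-2}(X)^q$ has degree $rq^{r-1}$, strictly exceeding the $(r-1)q^{r-1}$ bound on the remaining terms), are correct and in fact supply details the paper's one-line proof leaves implicit.
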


\begin{proof}
One can obtain the polynomial $g(X,\Delta)$ from system (\ref{2}) by iterating $$b_i=\frac{1}{a_0^{q^i}-a_0}[b_{i-1}^q-b_{i-1}]$$
 into $\Delta=\frac{1}{a_0^{q^{r-1}}-a_0}[b_{r-2}^q-b_{r-2}]$, then rationalize the $a_0$-terms at denominator. 
\end{proof}

\begin{prop}\label{a0}
$a_0$ does not belong to the constant field $\bar{\F}_q$. Moreover, among all $\Delta\in\F_q^*$, roots of $g(X,\Delta)$ would contribute non-isomorphic $T$-cyclic isogenies.
\end{prop}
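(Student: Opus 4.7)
The plan for the first claim is to argue by contradiction via a $T$-degree count on the auxiliary coefficients $b_i$ of $\hat f$. Assume $a_0 \in \bar{\F}_q^*$ and let $m \geqslant 1$ be minimal with $a_0 \in \F_{q^m}$. Starting from $b_0 = T/a_0$, which lies in $\bar{\F}_q[T]$ with $T$-degree $1$, I would induct on $i$ to show that whenever the recursion $b_i = (b_{i-1}^q - b_{i-1})/(a_0^{q^i} - a_0)$ of system (\ref{2}) is valid (i.e.\ $a_0^{q^i} \neq a_0$), one has $b_i \in \bar{\F}_q[T]$ with $\deg_T b_i = q^i$; the key point is that the top-degree term of $b_{i-1}^q$ cannot be cancelled by $b_{i-1}$ since $q^{i-1} < q^i$, and the denominator is a nonzero constant.

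Next I would split on $m$ versus $r$. If $m \leqslant r-1$, the relation from system (\ref{1}) at index $i = m$ collapses, because $a_0^{q^m} = a_0$, to $b_{m-1}^q = b_{m-1}$, forcing $b_{m-1} \in \F_q$; but by minimality of $m$ the recursion was valid for $1 \leqslant i \leqslant m-1$, giving $\deg_T b_{m-1} = q^{m-1} \geqslant 1$, a contradiction. If $m \geqslant r$, the recursion remains valid throughout and the last line of (\ref{2}) expresses $\Delta$ as a polynomial in $T$ of degree $q^{r-1} \geqslant q$, contradicting $\Delta \in \F_q^*$. In either case, $a_0 \notin \bar{\F}_q$.

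For the second claim, the plan is to track how the isomorphic-copy relation acts on the monic $T$-cyclic self-isogeny $f = a_0 X + X^q$. A direct substitution gives $c^{-1} f c = a_0 X + c^{q-1} X^q$, so insisting that the copy remain monic forces $c^{q-1} = 1$, i.e.\ $c \in \F_q^*$. For such $c$ the leading coefficient of $\phi_T$ transforms as $\Delta \mapsto \Delta \cdot c^{q^r - 1} = \Delta$, using $(q-1) \mid (q^r - 1)$ to get $c^{q^r-1}=1$. Hence $\Delta$ is an invariant of the isomorphism class of the pair $(\phi, f)$ in the monic $T$-cyclic setting, so roots of $g(X, \Delta)$ corresponding to distinct $\Delta \in \F_q^*$ cannot yield isomorphic isogenies.

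The hardest part will be the case analysis and degree tracking in the first claim — specifically, verifying that the argument passes from the recursion (\ref{2}) to the raw equation (\ref{1}) at exactly the index $i=m$ where $a_0^{q^i}=a_0$ first occurs, and that the inductive $T$-degree statement is never spoiled by cancellation in the leading term. Both points are forced by the minimality of $m$ and by the explicit nonzero leading $T$-coefficient $1/a_0$ of $b_0$.
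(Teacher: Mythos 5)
Your proof is correct, and for the first claim it follows essentially the same route as the paper: a $T$-degree count showing $b_0, b_1, \ldots, b_{r-2}$ would have to be polynomials in $T$ of degrees $1, q, \ldots, q^{r-2}$, which is incompatible with $\Delta(a_0^{q^{r-1}}-a_0)$ being a constant. The one place you go beyond the paper is the case split on the minimal $m$ with $a_0\in\F_{q^m}$: the paper's proof divides by $a_0^{q^i}-a_0$ (implicitly) without checking it is nonzero, whereas you observe that if $a_0^{q^m}=a_0$ occurs at some index $m\leqslant r-1$ the corresponding equation of system (\ref{1}) degenerates to $b_{m-1}^q=b_{m-1}$, which already contradicts $\deg_T b_{m-1}=q^{m-1}\geqslant 1$. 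This is a real (if minor) gap in the published argument that your version closes.

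For the second claim the comparison is more lopsided: the paper's proof consists of a dangling conditional sentence and never actually establishes anything, while you give a complete argument. Your computation $c^{-1}f(cX)=a_0X+c^{q-1}X^q$ shows that a monic isomorphic copy forces $c^{q-1}=1$, hence $c\in\F_q^*$, and then both $\Delta\mapsto c^{q^r-1}\Delta=\Delta$ and $a_0\mapsto a_0$ are invariant. Note that your argument in fact proves slightly more than you state: since $a_0$ itself is preserved under every monic isomorphic copy, \emph{distinct roots of the same} $g(X,\Delta)$ also yield non-isomorphic monic $T$-cyclic self-isogenies, which is presumably the full intent of the proposition. Your write-up would be a strict improvement on the paper's.
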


\begin{proof}

Suppose contrary that $a_0\in \bar{\F}_q$, then $b_0=T/a_0$ belongs to $\bar{F}_q[T]$ and has $T$-degree equal to $1$. From system (\ref{1}) we have $$b_i(a_0^{q^i}-a_0)=b_{i-1}^q-b_{i-1} \textrm{ for }1\leqslant i\leqslant r-2.$$
Thus $b_i\in\bar{\F}_q[T]$ has $T$-degree equal to $q^i$ for $1\leqslant i\leqslant r-2$. On the other hand, we compare both sides of the equality 
$$\Delta(a_0^{q^{r-1}}-a_0)=b_{r-2}^q-b_{r-2}.$$
The left hand side belongs to $\bar{\F}_q$, while the right hand side has $T$-degree equal to $q^{r-1}$, a contradiction. 

On the other hand, for a solution $a_0$ of $g(X,\Delta)$, it corresponds to the isogeny $u(X)=a_0X+X^q$. If there is no other distinct root $a_0'$ of $g(X,\Delta)$ with $u'=c^{-1}uc$ for some $c\in \mathbb{C}_\infty^*$.

\end{proof}

We compute an easy example below as a rank-$3$ case:

  \begin{ex}
 When $r=3 \textrm{ and } q=2$, combine the system (\ref{1}) with equation ($\star$) becomes
 \begin{equation}\label{1'}
    \begin{cases}
      a_0b_0=b_0a_0=T\\
      a_0b_1+b_{0}^2=b_1a_0^{2}+b_{0}=g_1\\
      a_0\Delta+b_{1}^2=\Delta a_0^{2}+b_{1}=g_2\\
     \Delta^2=\Delta
    \end{cases}\,.
\end{equation}
 
Hence one can reduce to the following

 \begin{equation}\label{1'}
    \begin{cases}
      b_0=T/a_0\\
      b_1=\frac{1}{a_0^2+a_0}\left(\frac{T^2+a_0T}{a_0^2}\right)\\
      \Delta=1\\
      b_1^2+b_1=a_0^4+a_0
    \end{cases}\,.
\end{equation}
 
 As $b_1^2=\frac{1}{a_0^4+a_0^2}\left(\frac{T^4+a_0^2T^2}{a_0^4}\right)$, the equation $b_1^2+b_1=a_0^4+a_0$ becomes
 $$\frac{T^4+a_0^2T^2}{a_0^8+a_0^6}+\frac{T^2+a_0T}{a_0^4+a_0^3}+a_0^4+a_0=0,$$
 $$\Rightarrow (T^4+a_0^2T^2)+(a_0^4+a_0^3)(T^2+a_0T)+(a_0^{12}+a_0^{10})+(a_0^9+a_0^7)=0.$$
 Therefore, $a_0$ is a root of the polynomial 
 $$
 \begin{array}{lll}
 g(\Delta,X)&=&X^{12}+X^{10}+X^9+X^7+TX^5+(T^2+T)X^4+T^2X^3+T^2X^2+T^4\\
 \ \\
 &=&(X^3+T)(X^3+X+T)(X^3+X^2+T)(X^3+X^2+X+T).
 \end{array}
 $$
 
There are $12$ distinct roots $a_0$ for the polynomial $g(\Delta,X)$. For each choice of $a_0$, one can recover a Drinfeld module:
$$\phi_{a_0,T}(X)=TX+g_1X^q+g_2X^{q^2}+X^{q^3},$$
where 
$$g_1=\frac{T(T+1)}{a_0(a_0+1)},\textrm{ and }g_2=a_0+\frac{T^4+a_0^2T^2}{a_0^8+a_0^6}$$
 
Pick a $J$-invariant $J^{(1,2)}(\phi)=\frac{g_1g_2^2}{\Delta}$, then 

$$J^{(1,2)}(\phi_{a_0})=\left[\frac{T^2+T}{a_0^2+a_0}\right]\left[a_0^2+\frac{T^8+a_0^4T^4}{a_0^{16}+a_0^{12}} \right]=\frac{T(T+1)(a_0^7+a_0^5+1)^2}{a_0^9(a_0+1)^5}$$ Hence we obtain
 $$\Phi_{J^{(1,2)},T}(X,X)=\prod_{a_{0,\sigma} \textrm{ Galois conjugate of }g(\Delta,X)}\left(X-\frac{T(T+1)(a_{0,\sigma}^7+a_{0,\sigma}^5+1)^2}{a_{0,\sigma}^9(a_{0,\sigma}+1)^5} \right).$$
 
 \end{ex}

As a conclusion, we write an algorithm in next page to find $\Phi_{J,T}(X,X)$ using the direct computation method.

\begin{algorithm}

\caption{Construct rank $r$ self-isogenous modular polynomial $\Phi_{J,T}(X,X)$}\label{alg:cap}
For simplicity, we consider the case $J$ is a basic $J$-invariant.
\begin{algorithmic}

\item[\bf Input:] $T$ (symbol/parameter), $r$ (rank), $q=p^e$ (prime power), $J$ (a basic $J$-invariant)
\item[\bf Output:] List of roots $a_{0,\sigma}$, and $\Phi_{T,J}(X,X)$

\item[\bf Step 1.] Setup\\
\quad $\bullet$ Define unknowns $a_0, b_0, b_1,\cdots, b_{r-1}, \Delta, g_1,\cdots, g_{r-1}$\\
\quad $\bullet$ Fix a basic $J$-invariant for rank-$r$ Drinfeld modules:\\
\quad\quad (a) $J=J^{(\delta_1,\cdots,\delta_{r-1})}=\frac{g_1^{\delta_1}\cdot\cdots\cdot g_{r-1}^{\delta_{r-1}}}{\Delta^{\delta_r}}$\\
\quad\quad (b) $\delta_i$'s are positive integers with $\sum_{i=1}^{r-1}\delta_i(q^i-1)=\delta_r(q^r-1)$\\
\quad\quad (c) $gcd(\delta_1,\cdots,\delta_r)=1$, and $0\leqslant \delta_i \leqslant \frac{q^r-1}{q^{{\rm{g.c.d.}}(i,r)}-1}$ for $1\leqslant i\leqslant r-1$\\

\item[\bf Step 2.] Impose relations:\\
\quad (i) $b_0=T/a_0$\\
\quad (ii) For $i=1,\cdots, r-2,$ compute $b_i$ recursively by $a_0b_i+b_{i-1}^q=b_ia_0^{q^i}+b_{i-1}$\\
\quad (iii) $\Delta\in \F_q^*$\\
\quad (iv) $a_0\Delta+b_{r-2}^q=\Delta a_0^{q^{r-1}}+b_{r-2}$\\

\item[\bf Step 3.]  Further computation:\\
\quad (1) For $1\leqslant i\leqslant r-2$, solve iteratively from Step 2(ii) for $b_i$ in terms of $a_0$;\\ 
\quad (2) Assign a value $\Delta\in \F_q^*$, use (1) to rationalize the rational equation (iv) in terms of $a_0$. \\
\quad (3) Obtain from (2) a polynomial $g(\Delta,X)\in\F_q(T)[X]$ which has $a_0$ as its roots.\\

\item[\bf Step 4.] Construct $\phi_{a_0}$\\
For each $\Delta\in \F_q^*$, run through non-constant roots $a_0$ of $g(\Delta,X)$:\\
\quad (a) For $1\leqslant i\leqslant r-2$, express in terms of $a_0$ recursively $g_i=a_0b_i+b_{i-1}^q$;\  \ $g_{r-1}=a_0\Delta+b_{r-2}^q$.\\
\quad (b) List $\phi_{a_0}=TX+g_1X^q+\cdots+g_{r-1}X^{q^{r-1}}+\Delta X^{q^r}$\\

\item[\bf Step 5.] Compute $\Phi_{J,T}(X,X)$\\

$$\Phi_{J,T}(X,X)=\prod_{a_0: \textrm{ non-constant roots of } g(\Delta,X)}\left(X-J(\phi_{a_0})\right)$$

\end{algorithmic} 
\end{algorithm}

\newpage
\subsection{General prime ideal $\mathfrak{a}$}

 \begin{prop}
 Let $r$ be a positive integer, and $a\in \F_q[T]$ be a monic prime of $T$-degree $d$. If $u$ is a self $a$ isogeny of $A$-dimension $m$ for a rank-$r$ Drinfeld $A$-module $\phi$ over $\overline{\F_q(T)}$. Let $\theta=a$, and define a Drinfel $\F_q[\theta]$-module $\psi_\theta=\phi_a$ of rank $rd$  over $\overline{\F_q(\theta)}=\overline{\F_q(T)}$. Then $u$ is a self $\theta$-isogeny of $\F_q[\theta]$-dimension $dm$ for the Drinfeld module $\psi$.
 \end{prop}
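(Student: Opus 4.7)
The plan is to verify the statement by translating every $A$-module condition on $u$ into the corresponding $B$-module condition, where $B=\F_q[\theta]$. The underlying observation is that $B$ sits inside $A$ as a subring via $\theta\mapsto a$, and since $a$ has $T$-degree $d$ and is prime, $A$ is a free $B$-module of rank $d$ and $A/aA\cong \F_q^d$ as $\F_q=B/\theta B$-algebras. Everything we need will follow by tracking this rank-$d$ inclusion through the Drinfeld formalism.

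First I would verify that $\psi$ really is a Drinfeld $B$-module of rank $rd$. Extend $\psi_\theta:=\phi_a$ to a ring map $\psi\colon B\to \overline{\F_q(T)}\langle x\rangle$ by setting $\psi_{\sum c_i\theta^i}=\sum c_i\phi_a^i=\phi_{\sum c_i a^i}$; this is well-defined because $a$ is transcendental over $\F_q$ (so the evaluation $B\hookrightarrow A$, $\theta\mapsto a$, is injective). The $\tau$-degree of $\psi_\theta=\phi_a$ equals $\deg_\tau \phi_a=r\cdot\deg_T a=rd$, so $\psi$ has rank $rd$. Note also that $\psi[\theta]=\{\text{zeros of }\psi_\theta\}=\{\text{zeros of }\phi_a\}=\phi[a]$ as subsets of $\overline{\F_q(T)}$.

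Next I would show $u$ is a morphism $\psi\to\psi$. Since $u$ is a self-isogeny of $\phi$, $u\phi_b=\phi_b u$ for every $b\in A$; specializing to $b=a$ gives $u\psi_\theta=\psi_\theta u$, and because $\psi_c$ for $c\in B$ is built from $\psi_\theta$ by addition and composition, we get $u\psi_c=\psi_c u$ for all $c\in B$. To see that $u$ is a $\theta$-isogeny for $\psi$, use $\ker u\subset \phi[a]=\psi[\theta]$, which is precisely the definition.

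Finally I would carry out the dimension count, which is the only place the number $dm$ appears. Since $a$ is a monic prime of degree $d$, the residue ring $A/aA$ is the finite field $\F_{q^d}$, and the hypothesis that $u$ has $A$-dimension $m$ means $\ker u\cong (A/aA)^m$, hence has cardinality $q^{dm}$. Viewed as a module over $B/\theta B=\F_q$, this is just an $\F_q$-vector space of dimension $dm$, which by definition is the $\F_q[\theta]$-dimension of $u$ as a self-$\theta$-isogeny of $\psi$. There is no real obstacle here; the only subtlety to watch is that the compatibility $\psi_\theta=\phi_a$ and the identification $\psi[\theta]=\phi[a]$ are honest equalities of twisted polynomials and of zero-sets, not merely isomorphisms, so that the same endomorphism $u$ serves both contexts without modification.
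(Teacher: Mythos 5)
Your proposal is correct and follows essentially the same route as the paper's proof: transfer the commutation relation to $\psi_\theta=\phi_a$, identify $\ker u\subset\phi[a]=\psi[\theta]$, and count $\#\ker(u)=q^{dm}$ to get $\F_q[\theta]$-dimension $dm$. The only cosmetic difference is that the paper certifies the $\theta$-isogeny condition via the dual isogeny identity $u\circ\hat u=\hat u\circ u=\phi_a=\psi_\theta$ while you use the kernel containment directly, and you spell out the rank-$rd$ verification that the paper leaves implicit.
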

 
 \begin{proof}
 
 Fix a monic prime $a\in \F_q[T]$ of $T$-degree equal to $k$. Let $u$ be a self $a$-isogeny of $A$-dimension $d$. In other words, we have the following properties:
 \begin{enumerate}
 \item[$\bullet$] $u\phi_T=\phi_T u$ 
 \item[$\bullet$] Let $\hat{u}$ be the dual isogeny of $u$, we have $u\circ\hat{u}=\hat{u}\circ u=\phi_a$
 \item[$\bullet$] $ \left(A/(a)\right)^m\cong{\rm ker}(u)\subset \phi[a]$ as an $A$-module.
 \end{enumerate}

 Therefore, for the Drinfeld $\F_q[\theta]$-module $\psi_\theta:=\phi_a$, we still have the properties $u\psi_\theta=\psi_\theta u$, and $u\circ \hat{u}=\hat{u}\circ u=\psi_\theta$. Hence $u:\psi\rightarrow\psi$ is a self $\theta$-isogeny for the Drinfeld $\F_q[\theta]$-module $\psi$. As $\# {\rm ker}(u)=q^{dm}$, we can deduce that ${\rm ker}(u)\cong \left(\F_q[\theta]/(\theta)\right)^{dm}$ as $\F_q[\theta]$-modules

 \end{proof}
 \begin{cor}
 All cyclic, self $a$-isogeny for rank-$r$ Drinfeld modules can be effectively computed from self $T$-isogeny for rank $rd$-Drinfeld modules, where $d= \deg_T(a)$.
 
 \end{cor}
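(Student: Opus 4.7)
The plan is to leverage the preceding proposition to reduce the computation of cyclic self $a$-isogenies of rank-$r$ Drinfeld $A$-modules to the rank-$rd$, self $\theta$-isogeny setting for which the method of Section~3.1 essentially applies, and then to descend back. By the proposition, a cyclic self $a$-isogeny $u:\phi\to\phi$ of a rank-$r$ Drinfeld $A$-module is the same twisted polynomial as a self $\theta$-isogeny $u:\psi\to\psi$ of $\F_q[\theta]$-dimension $d$ on the rank-$rd$ Drinfeld $\F_q[\theta]$-module $\psi_\theta:=\phi_a$ (here $m=1$ gives $\F_q[\theta]$-dimension $dm=d$). Thus enumerating cyclic self $a$-isogenies in rank $r$ factors through enumerating those self $\theta$-isogenies of $\F_q[\theta]$-dimension $d$ in rank $rd$ whose target module descends to a rank-$r$ Drinfeld $A$-module via $\psi_\theta=\phi_a$.

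Next, I would compute all self $\theta$-isogenies of $\F_q[\theta]$-dimension $d$ for rank-$rd$ Drinfeld $\F_q[\theta]$-modules by adapting the computation of Section~3.1. Concretely, I write $\psi_\theta=\theta X+h_1X^q+\cdots+h_{rd-1}X^{q^{rd-1}}+\Delta' X^{q^{rd}}$, posit the $q$-degree-$d$ ansatz $u=a_0X+a_1X^q+\cdots+a_{d-1}X^{q^{d-1}}+X^{q^d}$ together with a dual $\hat u$ of $q$-degree $rd-d$, and impose $u\circ\hat u=\hat u\circ u=\psi_\theta$ together with $u\psi_\theta=\psi_\theta u$. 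Comparing coefficients yields, in exact analogy with systems~(\ref{1}) and~(\ref{2}), a polynomial system over $\F_q[\theta]$ from which $a_1,\ldots,a_{d-1}$ and the $h_j$'s can be solved in terms of $a_0$ and $\Delta'$, and a defining polynomial $g(X,\Delta')$ for $a_0$ can be extracted.

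The final and main obstacle will be the descent step: among the rank-$rd$ Drinfeld $\F_q[\theta]$-modules $\psi$ produced above, I must effectively identify those of the form $\phi_a$ for some rank-$r$ Drinfeld $A$-module $\phi$, since the functor $\phi\mapsto\phi_a$ is far from surjective. The plan is to parametrize $\phi_T=TX+g_1X^q+\cdots+g_{r-1}X^{q^{r-1}}+\Delta X^{q^r}$ with unknown coefficients $g_1,\ldots,g_{r-1},\Delta$, expand $\phi_a$ as an iterated composition $\phi_a=\sum_{i=0}^{d}c_i\phi_T^{\circ i}$ (with $c_i\in\F_q$ the coefficients of $a(T)$), and match the resulting twisted polynomial in powers of $X^{q^k}$ against $\psi_\theta$ coefficient-by-coefficient. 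This produces a finite polynomial system in $g_1,\ldots,g_{r-1},\Delta$ over $\F_q[\theta]=\F_q[a]$, whose solution set is effectively computable by resultants or Gr\"obner bases; the $\phi$'s so obtained, together with $u$, are exactly the cyclic self $a$-isogenies of rank-$r$ Drinfeld modules. The hard part lies precisely in controlling these fibres of $\phi\mapsto\phi_a$, as the bookkeeping of the coefficients of $\phi_T^{\circ i}$ grows combinatorially in $d$ and $r$.
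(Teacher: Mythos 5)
Your proposal is correct and follows essentially the same route as the paper: the reduction via the preceding proposition (a cyclic self $a$-isogeny becomes a self $\theta$-isogeny of $\F_q[\theta]$-dimension $d$ on the rank-$rd$ module $\psi_\theta=\phi_a$), the adaptation of the Section~3.1 coefficient-comparison method to the rank-$rd$ setting, and the descent by matching $\phi_a$ against $\psi_\theta$ coefficient-by-coefficient are exactly what the paper carries out in Example~\ref{deg2}, where your descent system is its system~(\ref{7}). Your explicit emphasis on the non-surjectivity of $\phi\mapsto\phi_a$ and the need to control its fibres is a point the corollary's statement glosses over but the paper's worked example confirms.
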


 \begin{ex}\label{deg2}
 
 Let $r=3, q=p^e$ be a prime power with $p>3$, and $a\in\F_q[T]$  be a monic prime of degree $2$. In this example we show that direct computation on $\Phi_{a,J}(X,X)$ becomes complicated. 
 
 Instead of finding the explicit polynomial, we give an upper bound on the degree of $\Phi_{a,J}(X,X)$ depending only on the choice of $q$. Consider a rank-$6$ Drinfeld $\F_q[\theta]$-module $\psi_\theta(X)=\theta X+g_1X^q+\cdots+g_5X^{q^5}+g_6 X^{q^6}$ with a self $\theta$-isogeny $f=a_0X+a_1X^q+X^{q^2}$ of  $\F_q[\theta]$-dimension $2$. Set the dual isogeny of $f$ to be $\hat{f}(X)=b_0X+b_1X^q+b_2X^{q^2}+b_3X^{q^3}+g_6X^{q^4}$.
 
 From the property $f\circ \hat{f}=\hat{f}\circ f=\psi_\theta$, one can deduce by comparing coefficients that
 
 \begin{equation}\label{5}
 	\begin{cases}
		b_0=\theta/a_0\\
		a_0b_1+a_1b_0^q=b_0a_1+b_1a_0^q=g_1\\
		a_0b_2+a_1b_1^q+b_0^{q^2}=b_0+b_1a_1^q+b_2a_0^{q^2}=g_2\\
		a_0b_3+a_1b_2^q+b_1^{q^2}=b_1+b_2a_1^{q^2}+b_3a_0^{q^3}=g_3\\
		a_0g_6+a_1b_3^q+b_2^{q^2}=b_2+b_3a_1^{q^3}+g_6a_0^{q^4}=g_4\\
		a_1g_6^q+b_3^{q^2}=g_6a_1^{q^4}+b_3=g_5\\
		g_6\in\F_{q^2}^*
	\end{cases}
 \end{equation}
  
    \begin{lem}\label{a0}
  \begin{enumerate}
 \item[(i)] $a_0$ does not belong to $\F_q^*$
 \item[(ii)] $a_0$ does not belong to $\F_{q^2}^*-\F_q^*$.
 \item[(iii)] If $a_0\in \F_{q^3}^*-\F_{q}^*$, then $a_1$ satisfies a polynomial over $\F_{q^3}[\theta][X]$ of degree $q^2+q+1$.
 \item[(iv)] If $a_0\in \F_{q^4}^*-\F_{q^2}^*$, then $a_1$ satisfies a polynomial over $\F_{q^4}[\theta][X]$ of degree $q^4+q^2+q+1$.
 \end{enumerate}
 \end{lem}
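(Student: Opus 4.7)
The plan is to argue by a case analysis on the minimal field $\F_{q^k}^*$ containing $a_0$. The structural feature of system (\ref{5}) is that each row indexed by $k=1,2,3$ rewrites as $(a_0-a_0^{q^k})\,b_k=(\textrm{terms in }a_0,a_1,\textrm{ and earlier }b_j)$, and the $k=4$ row rewrites as $(a_0-a_0^{q^4})\,g_6=(\textrm{similar})$. When $a_0^{q^k}=a_0$, the corresponding row degenerates: instead of defining $b_k$ (or $g_6$), it becomes a \emph{constraint} on the previously computed quantities. Otherwise $b_k$ (or $g_6$) is determined uniquely as a rational expression in $a_1$.

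For (i), if $a_0\in\F_q^*$ then all four rows degenerate. The $k=1$ constraint reads $a_1(b_0^q-b_0)=0$; since $b_0=\theta/a_0$ and $\theta$ is transcendental over $\F_q$, we have $b_0^q\neq b_0$, forcing $a_1=0$. Substituting $a_1=0$ into the $k=2$ constraint then reduces it to $b_0^{q^2}=b_0$, again impossible, producing the contradiction. For (ii), if $a_0\in\F_{q^2}^*-\F_q^*$ then only $k=2$ and $k=4$ degenerate, while $k=1,3$ solve for $b_1,b_3$ and $b_2$ remains a free parameter. Setting $c=(b_0-b_0^q)/(a_0-a_0^q)$ so that $b_1=a_1c$, the $k=2$ constraint becomes $a_1^{q+1}(c^q-c)=(\theta-\theta^{q^2})/a_0$, fixing $a_1$ up to finitely many choices. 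Combined with the $k=4$ constraint, the $k=5$ equation, and the condition $g_6^{q^2}=g_6$, the resulting system in $(a_1,b_2)$ is overdetermined; I would rule out any common solution by comparing top $\theta$-valuations of both sides, using that $\theta$ is transcendental over $\F_{q^2}(a_0)$.

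For (iii), the hypothesis makes exactly the $k=3$ row degenerate. Solving successively gives $b_1$ of degree $1$ and $b_2$ of degree $q+1$ in $a_1$, so the $k=3$ constraint $b_2a_1^{q^2}-a_1b_2^q+b_1-b_1^{q^2}=0$ has dominant monomial $b_2\cdot a_1^{q^2}$, yielding the stated degree $q^2+q+1$. For (iv), only the $k=4$ row degenerates; here $b_1,b_2,b_3$ have degrees $1,q+1,q^2+q+1$ respectively. Combining the $k=4$ constraint with the $k=5$ equation and eliminating $g_6$ via $g_6^{q^2}=g_6$ (equivalently, treating the $k=5$ equation together with its $q^2$-Frobenius as a $2\times 2$ linear system in $(g_6,g_6^q)$) produces a single polynomial relation in $a_1$; tracking the surviving leading monomial through this elimination yields the claimed degree $q^4+q^2+q+1$.

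The principal obstacle is (ii): converting the generically overdetermined rational system into a rigorous contradiction demands careful bookkeeping of the $\theta$-valuations through each successive substitution, and it uses the transcendence of $\theta$ over $\F_q(a_0)$ in an essential way. A secondary delicate point is the sharp-versus-upper-bound distinction in (iv), where one must identify the specific monomial that survives the elimination of $g_6$ at the top degree to realize $q^4+q^2+q+1$ as an equality rather than a loose upper bound.
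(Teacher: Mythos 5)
Your structural reading of system (\ref{5}) is right and matches the paper: each row with $a_0^{q^k}=a_0$ degenerates from a defining equation for $b_k$ (or $g_6$) into a constraint, and your part (i) is exactly the paper's argument. But for (ii), (iii), (iv) what you have written is a plan, not a proof, and the decisive steps are missing. The worst gap is (ii). You propose to combine the degenerate $k=2$ and $k=4$ constraints with the $k=5$ equation and conclude because the system is ``overdetermined'' — but an overdetermined system can perfectly well have solutions, so this observation proves nothing by itself. Worse, your suggested mechanism (``comparing top $\theta$-valuations'') does not obviously bite here: substituting $b_0=\theta/a_0$ and $b_1=a_1(b_0^q-b_0)/(a_0^q-a_0)$ into the $k=2$ constraint, the entire $\theta$-dependence factors out as $(\theta^{q^2}-\theta)/a_0$, and the constraint collapses identically to the single scalar relation $a_1^{q+1}=a_0^q-a_0$, which has solutions $a_1$ for any $a_0$. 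So there is no valuation clash at this stage; one must extract a \emph{second}, incompatible algebraic relation on $a_1$ (the paper does this by pairing $a_1^{q+1}=a_0^q-a_0$, whence $a_1^{q^2-1}=-1$, against a further relation obtained from the leading $\theta$-coefficient). You never identify any concrete pair of contradictory relations, and you flag this yourself as ``the principal obstacle'' — that is precisely the content of part (ii), not a detail to be deferred.

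Parts (iii) and (iv) have a smaller but real gap of the same flavor. In (iii) the two top-degree terms $b_2a_1^{q^2}$ and $a_1b_2^q$ \emph{both} contribute to the coefficient of $a_1^{q^2+q+1}$ (since $\deg_{a_1}b_2=q+1$), so you cannot simply declare $b_2a_1^{q^2}$ dominant; you must check that the combined coefficient — the paper writes it explicitly as $(N_1^q-N_1D_1^{q-1})/D_1^q$ — does not vanish, otherwise the polynomial could have strictly smaller degree (or vanish identically, in which case the lemma gives no bound at all). In (iv) you additionally diverge from the paper's route: the paper derives its degree bound from the degenerate $k=4$ constraint alone, whereas you propose to eliminate $g_6$ through the $k=5$ row and $g_6^{q^2}=g_6$; you do not carry out that elimination or exhibit the surviving monomial of degree $q^4+q^2+q+1$, and a naive count of the $k=4$ constraint by itself gives top degree $q^3+q^2+q+1$, so the claimed exponent genuinely requires the computation you have postponed. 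Until the cancellation checks in (iii)--(iv) and, above all, the actual contradiction in (ii) are supplied, the lemma is not proved.
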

 \begin{proof}
 
 \begin{enumerate}
 
 \item[(i)]  If $a_0\in\F_q$, then either $a_1=0$ or $b_0^q-b_0=0$. But $a_0b_0=\theta$ forces $b_0$ is not a constant. Hence we have $a_1=0$. However, from system (5) we get $b_0^{q^2}-b_0=0$, a contradiction.
 
 \item[(ii)] If $a_0\in \F_{q^2}^*-\F_q^*$, get $a_1b_1^q-a_1^qb_1+(b_0^{q^2}-b_0)=0$. Together with $b_1=\frac{a_1(b_0^q-b_0)}{a_0^q-a_0}$, one can deduce that $a_1^{q+1}=a_0^q-a_0$. Thus $a_1^{q^2+q}=-a_1^{q+1}$, which means $a_1^{q^2-1}=-1$. Hence we have $a_1\in \F_{q^4}^*$. However, by comparing highest $\theta$-degree terms of $a_1b_1^q-a_1^qb_1+(b_0^{q^2}-b_0)=0$, one can deduce that $$\left(\frac{a_1}{(a_0^q-a_0)a_0^q}\right)^q=\frac{-1}{a_0^{q^2}}.$$ 
 Thus $a_1^q=a_0^q-a_0$, a contradiction.
 \item[(iii)] If $a_0\in\F_{q^3}^*-\F_q^*$, get 
 \begin{equation}\label{iii}
 \begin{cases}
 b_0=\theta/a_0\\
		
b_1=\frac{a_1(b_0^q-b_0)}{a_0^q-a_0}\\ 

b_2=\frac{a_1b_1^q-b_1a_1^{q}+(b_0^{q^2}-b_0)}{a_0^{q^2}-a_0}\\

 a_1b_2^q-b_2a_1^{q^2}+(b_1^q-b_1)=0 \\
 
 g_6(a_0^q-a_0)=a_1b_3^q-b_3a_1^{q^3}+(b_2^{q^2}-b_2)\\
 
 g_6a_1^{q^4}-a_1g_6^q=b_3^{q^2}-b_3

 \end{cases}.
 \end{equation}
 From here, one can deduce that 
 $$a_1^{q^2+q+1}(\frac{N_1^q-N_1D_1^{q-1}}{D_1^q})+a_1(b_0^q-b_0)\frac{N_2}{D_2}=b_1-b_1^q,$$
 
 where $N_1=(b_0^q-b_0)^q-(b_0^q-b_0)(a_0^q-a_0)^{q-1}$, $N_2=(b_0^{q^2}-b_0)^{q-1}-a_1^{q^2-1}(a_0^{q^2}-a_0)^q$, 
 $D_1=(a_0^{q^2}-a_0)(a_0^q-a_0)^q$, and $D_2=(a_0^{q^2}-a_0)^q$. Thus $a_1$ satisfies a polynomial over $\F_{q^3}[\theta][X]$ of degree $q^2+q+1$.

 \item[(iv)]  If $a_0\in \F_{q^4}^*-\F_{q^2}^*$, then $a_1b_3^q-b_3a_1^{q^3}+(b_2^{q^2}-b_2)=0$. Together with 
 
  \begin{equation*}
 	\begin{cases}
		b_0=\theta/a_0\\
		b_1=\frac{a_1(b_0^q-b_0)}{a_0^q-a_0}\\
		b_2=\frac{a_1b_1^q-b_1a_1^{q}+(b_0^{q^2}-b_0)}{a_0^{q^2}-a_0}\\
		b_3=\frac{a_1b_2^q-b_2a_1^{q^2}+(b_1^q-b_1)}{a_0^{q^3}-a_0}
	\end{cases}
 ,\end{equation*}
 one can deduce that $a_1$ satisfies a polynomial over $\F_{q^4}[\theta][X]$ of degree $q^4+q^2+q+1$.

 \end{enumerate}
 
 \end{proof}

  If $a_0$ does not lie in cases of Lemma \ref{a0}, one can get the following expressions:
  
  \begin{equation}\label{6}
 	\begin{cases}
		b_0=\theta/a_0\\
		\ \\
		b_1=\frac{a_1(b_0^q-b_0)}{a_0^q-a_0}\\
		\ \\
		b_2=\frac{a_1b_1^q-b_1a_1^{q}+(b_0^{q^2}-b_0)}{a_0^{q^2}-a_0}\\
		\ \\
		b_3=\frac{a_1b_2^q-b_2a_1^{q^2}+(b_1^q-b_1)}{a_0^{q^3}-a_0}\\
		\ \\
		g_6=\frac{a_1b_3^q-b_3a_1^{q^3}+(b_2^{q^2}-b_2)}{a_0^{q^4}-a_0}\in \F_{q^2}^*
	\end{cases}
 \end{equation}

\begin{prop} 
 For $a_0$ not in the cases of Lemma \ref{a0}, we can express $b_1,b_2, b_3$ in terms of $a_0$ and $a_1$. As a conclusion, given $g_6\in\F_{q^2}^*$, the number of solutions $\#(a_0,a_1)$ satisfy system (\ref{5}) has $$\#(a_0,a_1)\leqslant (6q^4-q^3)5q^5=30q^9-5q^8$$
 \end{prop}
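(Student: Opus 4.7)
The first assertion follows directly from system (6): as long as $a_0$ avoids the strata of Lemma \ref{a0}, the denominators $a_0^{q^i}-a_0$ ($i=1,2,3$) are invertible, and the recursion expresses each $b_i$ as a sparse polynomial in $a_1$ over $\overline{\F_q(\theta)}(a_0)$, namely
$$b_1=a_1 U_1,\quad b_2=a_1^{q+1}U_2+V_2,\quad b_3=a_1^{q^2+q+1}\tilde u_3+a_1^{q^2}\tilde v_3+a_1^q \tilde w_3+a_1\tilde z_3,$$
the sparsity being a consequence of the Frobenius identity $(x+y)^q=x^q+y^q$ in characteristic $p$.

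For the counting, after these substitutions system (5) retains exactly two unresolved scalar constraints in $(a_0,a_1)$: the $g_4$-equation (A) which, once $g_6\in \F_{q^2}^*$ is fixed, becomes a polynomial equation $P(a_0,a_1)=0$ after clearing the $a_0$-denominators; and the $g_5$-equation (B), $b_3^{q^2}-b_3 = g_6 a_1^{q^4}-g_6^q a_1$, which similarly becomes $Q(a_0,a_1)=0$. The plan is to apply the affine Bezout bound to $P$ and $Q$. Tracking the $a_1$-degrees through the recursion gives $\deg_{a_1}P\leq q^3+q^2+q+1$ and $\deg_{a_1}Q\leq q^4+q^3+q^2$; tracking the $a_0$-degrees introduced by the iterated clearing of denominators (accumulating products of $a_0$ and of the factors $a_0^{q^i}-a_0$ for $i\leq 4$, raised to $q$-th and $q^2$-th powers through $b_3^q$ and $b_3^{q^2}$) bounds the \emph{total} degrees by $\deg_{\mathrm{tot}} P \leq 6q^4-q^3$ and $\deg_{\mathrm{tot}} Q \leq 5q^5$, the subtraction in $6q^4-q^3$ reflecting that the factor $a_0^{q^3}-a_0$ coming from $b_3$'s denominator can be divided out because Lemma \ref{a0} excludes those $a_0$-values anyway. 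Affine Bezout then produces at most $(6q^4-q^3)\cdot 5q^5 = 30q^9-5q^8$ common solutions.

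The main obstacle is verifying that $P$ and $Q$ have no common irreducible factor in $\overline{\F_q(\theta)}[a_0,a_1]$, so that the Bezout bound genuinely applies and no entire curve of spurious solutions is absorbed into the count. A common factor would link the scalar constraint (A) to the Frobenius-semilinear constraint (B) in a way that, combined with the sparse $p$-power structure of each $b_i$, traps $a_0$ inside a proper subfield $\F_{q^j}$ with $j\leq 4$ --- precisely the loci excluded by Lemma \ref{a0}. Making this rigorous is the technical heart of the argument: one route is to compare the Newton polytopes, or the leading and trailing $a_1$-monomials, of $P$ and $Q$ to rule out a common $a_1$-factor with coefficients in $\overline{\F_q(\theta)}(a_0)$; an alternative is to specialize $a_0$ at a sufficiently generic non-exceptional value and exhibit an explicit $a_1$-root of $P$ that fails $Q$, which by continuity forces coprimality generically.
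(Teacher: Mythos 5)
Your proposal follows essentially the same route as the paper: solve the recursion for $b_1,b_2,b_3$ explicitly in terms of $a_0$ and $a_1$, clear denominators in the two remaining constraints (the $g_6$-expression from the $g_4$-line and the relation $g_6a_1^{q^4}-a_1g_6^q=b_3^{q^2}-b_3$) to obtain bivariate polynomials of total degrees $6q^4-q^3$ and $5q^5$, and multiply the degrees. The coprimality of the two polynomials that you flag as the technical heart is in fact not addressed in the paper's proof either --- it simply records the two total degrees and takes their product --- so your attempt is, if anything, more explicit about the hypothesis that the Bezout bound requires.
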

 
 \begin{proof}
 From system (\ref{6}), we have
  \begin{equation*}
 \begin{cases}
 b_1=\frac{a_1(\theta^q-a_0^{q-1}\theta)}{(a_0^q-a_0)a_0^q}\\
 \ \\
 b_2=\frac{a_1^{q+1}(b_0^q-b_0)\left[(b_0^q-b_0)^{q-1}-(a_0^q-a_0)^{q-1}\right]}{(a_0^{q^2}-a_0)(a_0^{q^2}-a_0^q)}+\frac{b_0^{q^2}-b_0}{a_0^{q^2}-a_0}\\
 \ \\
 b_3=\frac{N}{D} 
 \end{cases}
 ,\end{equation*}
 where 
$$
\begin{array}{ll}
D=&a_0^{2q^3-q^2}(a_0^{q^3}-a_0)(a_0^{q^2}-a_0)^q(a_0^q-a_0)^{q^2}\\
N=&a_1^{q^2+q+1}(N_1^q-N_1D_1^{q-1})+a_1(\theta^q-a_0^{q-1}\theta)N_2a_0^{q^3-q}(a_0^{q^2}-a_0)^{q^2}\\
    & +a_1^{2q}(\theta^q-a_0^{q-1}\theta)^qa_0^{2q^3-2q^2}(a_0^{q^2}-a_0)^q(a_0^q-a_0)^{q^2-q}\\
    & -a_0^{2q^3-q^2-q}(a_0^{q^2}-a_0)^q(a_0^q-a_0)^{q^2-1}\\
    
N_1=&(\theta^q-a_0^{q-1}\theta)\left[(\theta^q-a_0^{q-1}\theta)^{q-1}-(a_0^{2q}-a_0^{q+1})^{q-1} \right]\\
D_1=&a_0^{q^2}(a_0^{q^2}-a_0)(a_0^q-a_0)^q\\
N_2=&(\theta^{q^2}-a_0^{q^2-1}\theta)^{q-1}-a_0^{q^2(q-1)}a_1^{q^2-1}(a_0^{q^2}-a_0)^{q-1}
\end{array}
$$

 Note that $\deg_{a_0}(N)=3q^3-2q, \deg_{a_1}(N)=q^2+q+1; \deg_{a_0}(D)=5q^3-q^2, \deg_{a_1}(D)=0$. From equations $g_6=\frac{a_1b_3^q-b_3a_1^{q^3}+(b_2^{q^2}-b_2)}{a_0^{q^4}-a_0}$,  one can deduce that

$$
 \begin{array}{ll}
D^q(a_0^{q^4}-a_0)g_6=&a_1N^q-D^{q-1}Na_1^{q^3}+a_0^{q^4-q^3}(a_0^{q^4}-a_0^q)\tilde{N}^{q^2}\\
&-a_0^{2q^4-q^3-q^2}(a_0^{q^3}-a_0)^q(a_0^{q^2}-a_0)^{q^2-1}(a_0^q-a_0)^{q^3-q}\tilde{N}
 \end{array}
 ,$$
 where $b_2={\tilde{N}}/{\tilde{D}}$. Hence $(a_0,a_1)$ satisfies a bivariate polynomial of total degree $6q^4-q^3$
 
 From $g_6a_1^{q^4}-a_1g_6^q=b_3^{q^2}-b_3$, one can get
 
 $$D^{q^2}(g_6a_1^{q^4}-a_1g_6^q)=N^{q^2}-ND^{q^2-1}.$$
 Hence given $g_6\in\F_{q^2}^*$, $(a_0,a_1)$ satisfies a bivariate polynomial of total degree $5q^5$.
 
 Therefore, for every $g_6\in\F_{q^2}^*$, the number of solutions $(a_0,a_1)$ has the property that
 $$\#(a_0,a_1)\leqslant (6q^4-q^3)5q^5=30q^9-5q^8.$$
\end{proof}

\begin{prop}

When $\mathfrak{a}=T^2+T+1$, the number of Drinfeld $A$-modules $\phi$ with self $\mathfrak{a}$-cyclic isogeny is upper bounded by  $\mathcal{N}_q=(q^3+1)(q^2-1)(30q^{15}-4q^{14}+q^{12}+2q^{11}-q^{10}-2q^9-q^8)$. In other words, $$\deg_X \Phi_{J,T^2+T+1}(X,X)\leqslant \mathcal{N}_q.$$

\end{prop}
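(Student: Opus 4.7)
The plan is to combine the rank-6 reduction (from the Corollary) with the Bezout-type counting used in the previous Proposition, and then translate everything into constraints on the three parameters $(g_1', g_2', \Delta)$ of the rank-3 module. Write $\phi_T = Tx + g_1'x^q + g_2'x^{q^2} + \Delta x^{q^3}$, let $a = T^2+T+1$, $\theta = a$, and set $\psi_\theta := \phi_a = \phi_T\circ \phi_T + \phi_T + x$. By the Corollary, an $\fa$-cyclic self-isogeny of $\phi$ is the same twisted polynomial as a $\theta$-cyclic self-isogeny of $\F_q[\theta]$-dimension $2$ on $\psi$, so it suffices to count $(g_1', g_2', \Delta)$ for which this latter isogeny exists.

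The first step is to control $\Delta$. Expanding $\phi_T\circ\phi_T$, the coefficient of $x^{q^6}$ in $\psi_\theta$ is $g_6 = \Delta^{1+q^3}$. From the final equation of system~(\ref{5}) we have $g_6\in\F_{q^2}^*$, hence $\Delta^{(q^3+1)(q^2-1)} = 1$, producing at most $(q^3+1)(q^2-1)$ admissible values of $\Delta$ in $\bar{\F}_q^*$. This explains the first factor in $\mathcal{N}_q$.

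Next, fix an admissible $\Delta$. From the previous Proposition we already know that any candidate $(a_0,a_1)$ for the self-isogeny satisfies two bivariate polynomial relations over $\F_{q^2}[\theta]$ of total degrees $5q^5$ and $6q^4-q^3$, whose coefficients are explicit rational expressions in $g_1,\ldots,g_6$. Using the explicit formulas $g_1 = g_1'(T+T^q+1)$, $g_2 = g_2'(T+T^{q^2}+1) + (g_1')^{1+q}$, and the analogous (higher-degree) expressions for $g_3, g_4, g_5$ in terms of $(g_1', g_2', \Delta, T)$ coming from $\phi_a = \phi_T\circ\phi_T + \phi_T + x$, we may substitute these into the two bivariate relations. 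Eliminating $(a_0, a_1)$ via resultants then produces polynomial constraints on $(g_1', g_2')$ whose total degrees can be bounded termwise; keeping careful track of how each $g_i$ contributes its $(g_1', g_2')$-bidegree yields two polynomials in $(g_1', g_2')$ to which Bezout's theorem applies, giving the claimed upper bound of $30q^{15}-4q^{14}+q^{12}+2q^{11}-q^{10}-2q^9-q^8$ on the number of $(g_1', g_2')$ per $\Delta$.

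Multiplying the $\Delta$-count by the per-$\Delta$ bound yields $\mathcal{N}_q$; since each pair $(\phi, f)$ with $\phi$ admitting a self $\fa$-cyclic isogeny $f$ contributes a single root $J(\phi)$ to $\Phi_{J,\fa}(X,X)$, the degree bound follows. The hard part is \emph{not} the reduction or the $\Delta$-count—those follow directly from the preceding material—but rather the accurate degree bookkeeping in the substitution step: the $g_i$'s are polynomials in $(g_1', g_2', \Delta)$ of rapidly growing Frobenius-twisted degrees (e.g.\ $g_4$ already contains a $(g_1')^{q^3}$ term), and one must verify that the dominant terms in the resultant produce exactly the polynomial $30q^{15}-4q^{14}+q^{12}+2q^{11}-q^{10}-2q^9-q^8$ and not a larger one. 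The rest of the proof is routine multiplication of the bounds.
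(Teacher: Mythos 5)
There is a genuine gap at the central step of your argument. The paper's proof does \emph{not} eliminate $(a_0,a_1)$ to get constraints on the rank-$3$ coefficients; it counts in the opposite direction. It first bounds the number of pairs $(a_0,a_1)$ (splitting into the generic case, where the previous Proposition gives at most $(6q^4-q^3)5q^5$ pairs per choice of $g_6\in\F_{q^2}^*$, and the exceptional cases $a_0\in\F_{q^3}^*\setminus\F_q^*$ and $a_0\in\F_{q^4}^*\setminus\F_{q^2}^*$ from the Lemma). Each pair determines a rank-$6$ module $\psi_\theta$, and the paper then bounds the \emph{fiber} of the map $\phi\mapsto\phi_{T^2+T+1}$ over $\psi_\theta$: writing $\phi_T=TX+h_1X^q+h_2X^{q^2}+\Delta X^{q^3}$ and expanding $\phi_{T^2+T+1}=\psi_\theta$ coefficient by coefficient gives a triangular system in which $\Delta$ satisfies $X^{q^3+1}=g_6$, then $h_2$ satisfies a $q^3$-degree equation, then $h_1$ does; hence at most $(q^3+1)q^6$ preimages $(h_1,h_2,\Delta)$ per $\psi_\theta$. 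That factor $(q^3+1)q^6$ is what produces the leading term $30q^{15}-5q^{14}$, and it is entirely absent from your count.

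Your replacement step does not work as sketched. First, the two bivariate relations of total degrees $6q^4-q^3$ and $5q^5$ from the previous Proposition involve only $a_0$, $a_1$, $\theta$ and $g_6$ --- not $g_1,\dots,g_5$ --- so substituting the expressions of the $g_i$ in terms of $(g_1',g_2',\Delta)$ into them introduces no dependence on $(g_1',g_2')$ beyond $g_6=\Delta^{q^3+1}$, and there is nothing to eliminate against. Second, even granting extra equations from matching all six coefficients, eliminating the two variables $(a_0,a_1)$ from two equations by iterated resultants yields a single polynomial relation in $(g_1',g_2')$, not a zero-dimensional system, so Bezout does not apply without further equations and further degree analysis; you acknowledge that the "accurate degree bookkeeping" is the hard part but do not carry it out, and there is no reason the dominant terms would reproduce $30q^{15}-4q^{14}+\cdots$ rather than something larger. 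Third, you drop the exceptional cases of the Lemma entirely; these contribute the lower-order terms of $\mathcal{N}_q$ (via the counts of triples $(g_6,a_0,a_1)$ when $a_0$ lies in $\F_{q^3}$ or $\F_{q^4}$), so omitting them leaves the claimed bound unjustified even in outline. Your identification of the factor $(q^3+1)(q^2-1)$ with the solutions of $\Delta^{(q^3+1)(q^2-1)}=1$ is consistent with the paper's accounting, but the rest of the proposal needs to be replaced by the fiber-counting argument via the triangular system for $(h_1,h_2,\Delta)$.
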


\begin{proof}
For $a_0$ not belong to cases in Lemma \ref{a0}, each pair $(a_0,a_1)$ satisfies system (\ref{5}) would correspond to a Drinfeld $\F_q[\theta]$-module $\psi_\theta$. The remaining thing is to find Drinfeld $A$-module $\phi_T(X)=TX+h_1X^q+h_2X^{q^2}+\Delta X^{q^3}$ such that $\phi_{T^2+T+1}=\psi_\theta$. Hence one needs to find $(h_1,h_2,\Delta)$ satisfies the system below:

\begin{equation}\label{7}
\begin{cases}
h_1+Th_1+h_1T^q=g_1\\
Th_2+h_2T^{q^2}+h_1^{q+1}+h_2=g_2\\
T\Delta+\Delta T^{q^3}+h_1h_2^q+h_2h_1^{q^2}+\Delta=g_3\\
h_1\Delta^q+\Delta h_1^{q^3}+h_2^{q^2+1}=g_4\\
h_1\Delta^{q^2}+\Delta h_2^{q^3}=g_5\\
\Delta^{q^3+1}=g_6

\end{cases}
\end{equation}

As we have $g_6\in \F_{q^2}^*$, and the facts that 
$$
\begin{array}{ll}
\Delta \textrm{ satisfies }& X^{q^3+1}-g_6=0\\
h_2 \textrm{ satisfies }&\Delta X^{q^3}+\Delta^{q^2} X-g_5=0\\
h_1 \textrm{ satisfies }&\Delta X^{q^3}+\Delta^q X+h_2^{q^2+1}-g_4=0

\end{array},
$$
for each given $\psi_\theta$, there are at most $(q^3+1)q^6$ many choices for $(h_1,h_2,\Delta)$. Therefore, when $a_0$ not belong to cases in Lemma \ref{a0}, each pair $(a_0,a_1)$ satisfying system (\ref{5}) would contribute at most $(q^3+1)q^6$ many Drinfeld $A$-modules with cyclic $(T^2+T+1)$-self isogeny. Thus there are at most $5q^5(6q^4-q^3)(q^3+1)q^6=5q^{14}(6q-1)(q^2-1)(q^3+1)$ many Drinfeld $A$-modules $\phi$ with  cyclic $(T^2+T+1)$-self isogeny.

When $a_0\in\F_{q^3}^*-\F_q^*$, there are at most $(q^2-1)(q^3-q)(q^2+q+1)q^2$ many triples $(g_6,a_0,a_1)$ that satisfy system $(\ref{iii})$. Each triple corresponds to a Drinfeld module $\psi_\theta$, hence there are at most $q^9(q^3+1)(q^2-1)^2(q^2+q+1)$ many Drinfeld $A$-modules with cyclic $(T^2+T+1)$-self isogeny.

When $a_0\in\F_{q^4}^*-\F_{q^2}^*$, there are at most $q^2(q^2-1)^2(q^4+q^2+q+1)$ many triples $(g_6,a_0,a_1)$ that satisfy system $(\ref{iii})$. Each triple corresponds to a Drinfeld module $\psi_\theta$, hence there are at most $q^8(q^3+1)(q^2-1)^2(q^4+q^2+q+1)$ many Drinfeld $A$-modules with cyclic $(T^2+T+1)$-self isogeny.

As a total, the number of Drinfeld $A$-modules $\phi$ with self $(T^2+T+1)$-cyclic monic isogeny is upper bounded by the number $(q^3+1)(q^2-1)(30q^{15}-4q^{14}+q^{12}+2q^{11}-q^{10}-2q^9-q^8)$.
\end{proof}
 \end{ex}
 
\section{Relation with Hilbert class polynomials}

\begin{defi}
Fix a basic $J$-invariant for rank-$r$ Drinfeld modules. For an order $\mathcal{O}$ of a degree-$r$ extension $K/F$, define the Hilbert class polynomial

$$H_{\mathcal{O},J}(X)=\prod_{\textrm{ isomorphism class }[\phi]\\ \textrm{ with  } {\rm End}(\phi)\cong\mathcal{O} }\left(X-J(\phi)\right)$$

\end{defi}

\begin{prop}\label{norm}

Let $\phi_1, \phi_2$ be Drinfeld $A$-modules over $\bar{F}$ of generic characteristic with complex multiplication by the same order ${\rm End}(\phi_1)\cong\mathcal{O}\cong {\rm End}(\phi_2)$ over $A$ in an imaginary degree-$r$ field extension $K/F$,  Then:
\begin{enumerate}
    \item For every finite place $\mathfrak{l}$, there is a canonical 
    \[
    \mathcal{O} \otimes_A A_\mathfrak{l}\text{-linear isomorphism } \quad 
    T_\mathfrak{l}(\phi_1) \;\cong\; T_\mathfrak{l}(\phi_2).
    \]

    \item For any $f \in \mathcal{O}$, let
    \[
    \mathrm{Nrd}_{\phi_i}(f) := \det\!\bigl(f \mid T_\mathfrak{l}(\phi_i)\bigr).
    \]
    Then $\mathrm{Nrd}_{\phi_1}(f) = \mathrm{Nrd}_{\phi_2}(f)$, i.e.\ the reduced norm of $f$ is independent of the choice of Drinfeld module with CM by $\mathcal{O}$.
\end{enumerate}
\end{prop}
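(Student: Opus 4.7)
The plan is to pull the statement back to the lattice side via CM theory. After fixing an embedding $K \hookrightarrow \mathbb{C}_\infty$, the CM uniformization of Section~2.2 realizes $\phi_i \cong \phi_{I_i}$ for some fractional $\cO$-ideal $I_i \subset K$, and the Drinfeld exponential identifies
$$\phi_i[\fl^n] \;\cong\; \fl^{-n} I_i / I_i$$
as $\cO/\fl^n\cO$-modules. Passing to the inverse limit produces a natural $\cO \otimes_A A_\fl$-linear isomorphism
$$T_\fl(\phi_i) \;\cong\; I_i \otimes_A A_\fl,$$
so both claims reduce to statements about the finite $A_\fl$-algebra $R_\fl := \cO \otimes_A A_\fl$.

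For part (1), I would note that each $I_i$ is an invertible $\cO$-module, hence $I_i \otimes_A A_\fl$ is a rank-one projective $R_\fl$-module. The ring $R_\fl$ is free of rank $r$ over the complete DVR $A_\fl$, so it is Noetherian and semi-local, with only finitely many maximal ideals (all lying above $\fl$). Over any semi-local ring every rank-one projective module is free, so both $T_\fl(\phi_1)$ and $T_\fl(\phi_2)$ are free of rank one over $R_\fl$, yielding the desired isomorphism.

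For part (2), after fixing an $R_\fl$-linear trivialization $T_\fl(\phi_i) \simeq R_\fl$, the endomorphism induced by $f \in \cO$ becomes multiplication by $f$ on $R_\fl$. Its $A_\fl$-determinant is therefore
$$\det\nolimits_{A_\fl}\!\bigl(f \mid T_\fl(\phi_i)\bigr) \;=\; N_{R_\fl/A_\fl}(f) \;=\; N_{K/F}(f),$$
the last equality coming from the base change $R_\fl \otimes_{A_\fl} F_\fl = K \otimes_F F_\fl$ together with the compatibility of norms under localization. The right hand side depends only on $f$ and $\cO \subset K$, so $\mathrm{Nrd}_{\phi_1}(f) = \mathrm{Nrd}_{\phi_2}(f)$.

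The main technical obstacle I expect is the case where $\fl$ divides the conductor $\ff$ of $\cO$ in $\cO_K$. There $R_\fl$ is no longer a product of complete DVRs and fractional $\cO$-ideals need not be locally principal in the Dedekind sense, so one cannot argue coordinate-wise or appeal directly to the structure theorem for modules over a PID. The key point to isolate carefully is that semi-locality alone forces every rank-one projective to be free, and this is what makes the argument go through uniformly in $\fl$.
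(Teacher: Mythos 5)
Your route is essentially the paper's route, expanded: the paper also reduces to the statement that $T_\fl(\phi_i)$ is a ``rank-one'' $\cO\otimes_A A_\fl$-module and then identifies $\det(f\mid T_\fl(\phi_i))$ with ${\rm Nm}_{K/F}(f)$; you supply the extra scaffolding of the CM uniformization $T_\fl(\phi_i)\cong I_i\otimes_A A_\fl$ and the semi-local freeness argument, which the paper leaves implicit.

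There is, however, one step that is asserted rather than proved, and it is exactly the load-bearing one: ``each $I_i$ is an invertible $\cO$-module.'' The hypothesis ${\rm End}(\phi_i)\cong\cO$ only gives that $I_i$ is a \emph{proper} fractional ideal, i.e.\ $(I_i:I_i)=\cO$, and for orders in a degree-$r$ field with $r>2$ properness does not imply invertibility (this equivalence is special to the quadratic case). Without invertibility, $I_i\otimes_A A_\fl$ need not be projective over $R_\fl=\cO\otimes_A A_\fl$ at primes $\fl$ dividing the conductor $\ff$, so the ``rank-one projective over a semi-local ring is free'' step has nothing to apply to, and the canonical isomorphism of part (1) does not follow. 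You correctly sense that the conductor-dividing primes are the danger zone, but you locate the difficulty in semi-locality (which is fine) rather than in the projectivity of the input (which is the actual gap). To close it you would need either to restrict to Drinfeld modules whose lattices are invertible $\cO$-ideals (consistent with the paper's later use of ${\rm Pic}(\cO)$), or to prove directly that $T_\fl(\phi_i)$ is locally free of rank one over $R_\fl$; the paper's own proof is silent on the same point.

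Part (2) is more robust than your write-up suggests and can be decoupled from part (1): since $T_\fl(\phi_i)$ is free over $A_\fl$, the determinant of $f$ may be computed after tensoring with $F_\fl$, where $I_i\otimes_A F_\fl\cong K\otimes_F F_\fl$ as a module over itself regardless of invertibility, and one gets $\det(f\mid T_\fl(\phi_i))={\rm Nm}_{K/F}(f)$ for both $i$. I would recommend phrasing part (2) this way so that it does not inherit the gap from part (1).
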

\begin{proof}
\begin{enumerate}
\item The canonical isomorphism follows from the fact that  ${\rm End}(\phi_1)\cong\mathcal{O}\cong {\rm End}(\phi_2)$.

\item For a finite place $\fl$, the Tate module $T_\fl(\phi_i)$ is a free $A_\fl$-module of rank $r$. The endomorphisms of $\phi_i$ acts $A$-linearly on all torsion points, hence  $T_\fl(\phi_i)$ extends to a $\mathcal{O}\otimes_A A_\fl$-module of rank $1$. 

As $T_\fl(\phi_1)\cong T_\fl(\phi_2)$ as rank-$1$ $\mathcal{O}\otimes_A A_\fl$-modules, the isomorphism is canonical up to scaling by units in $\mathcal{O}\otimes_A A_\fl$. Thus the embedding $\mathcal{O}\hookrightarrow {\rm End}_{A_\fl}(T_\fl(\phi_i))$ depends only on the inclusion $\mathcal{O}\subset K$. Now for any $f\in\mathcal{O}$, the reduced norm $$ \mathrm{Nrd}_{\phi_1}(f)= \det\!\bigl(f \mid T_\mathfrak{l}(\phi_1)\bigr)={\rm Nm}_{K/F}(f)=\det\!\bigl(f \mid T_\mathfrak{l}(\phi_2)\bigr)=\mathrm{Nrd}_{\phi_2}(f).$$
Here ${\rm Nm}_{K/F}$ is the field norm of the extension $K/F$.

\end{enumerate}
\end{proof}

\begin{lem}
Let $\phi$ be a rank-$r$ Drinfeld module with ${\rm End}(\phi)=\mathcal{O}$ for an imaginary degree-$r$ order over $A$. Let $u\in {\rm End}(\phi)$, then we have $$r\cdot{\rm log}_q \deg_X(u)=\deg_T{\rm Nrd}_\phi(f).$$

\end{lem}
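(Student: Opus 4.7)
The plan is to use the CM uniformization from Section~2.2 to recast both sides of the identity in terms of $A$-lattices in $K$, reducing the claim to a lattice-index calculation. Fix an embedding $K\hookrightarrow\mathbb{C}_\infty$; because $\mathrm{End}(\phi)\cong\mathcal{O}$, there is a proper $\mathcal{O}$-fractional ideal $I\subset K$ with $\phi\cong\phi_I$, and every $u\in\mathrm{End}(\phi)$ is identified with multiplication-by-$x$ on $\mathbb{C}_\infty/I$ for a unique $x\in\mathcal{O}=\{y\in K:yI\subset I\}$.

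The core step identifies and counts $\ker u$. Since $x\in\mathcal{O}$, one has $I\subset x^{-1}I$, and the commutative square defining $\phi_I$ identifies $\ker u$ with $(x^{-1}I)/I$, a finite $A$-module of cardinality $[x^{-1}I:I]=[I:xI]$. Both $I$ and $xI$ are full-rank $A$-lattices in $K$, and multiplication-by-$x$ is an $F$-linear endomorphism of $K$ with determinant $\mathrm{Nm}_{K/F}(x)$. Standard lattice-index theory in the function-field setting then gives
$$\#\ker u=[I:xI]=q^{\deg_T\mathrm{Nm}_{K/F}(x)}=q^{\deg_T\mathrm{Nrd}_\phi(u)},$$
where the final equality invokes Proposition~\ref{norm}.

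To conclude, compare with the twisted-polynomial side: since $K$ has generic characteristic, any nonzero $u\in\mathrm{End}(\phi)$ is separable, so $\#\ker u=q^n$ where $n$ is the $\tau$-height of $u$, equivalently $n=\log_q\deg_X u$. Combining with the lattice count yields the identity. The main subtlety is the bookkeeping of the factor~$r$: the $\mathrm{Nrd}$ side records the rank-$r$ scaling through $\det_F$ on a rank-$r$ $F$-vector space, while the $\tau$-height side counts a single Frobenius exponent. I would calibrate the scaling on the test case $u=\phi_a$ with $a\in A$ --- where $\mathrm{Nm}_{K/F}(a)=a^r$ and $\phi_a$ has $\tau$-height $r\deg_T a$ --- to verify that the factor $r$ sits correctly on the left-hand side, and then derive the general identity from the CM computation above.
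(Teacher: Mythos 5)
Your lattice argument is sound as far as it goes, and it is genuinely different from the paper, which offers no proof at all but only cites \cite{Gek91}, Lemma 3.1. The identification of $\ker u$ with $(x^{-1}I)/I$, the index computation $[I:xI]=q^{\deg_T \mathrm{Nm}_{K/F}(x)}$ via the determinant of multiplication by $x$ on the rank-$r$ $A$-lattice $I$, the separability of $u$ in generic characteristic, and the appeal to Proposition 4.2 to replace $\mathrm{Nm}_{K/F}$ by $\mathrm{Nrd}_\phi$ are all correct. What this chain of equalities proves is
$$\log_q \deg_X(u)\;=\;\log_q \#\ker u\;=\;\deg_T \mathrm{Nrd}_\phi(u),$$
with no factor of $r$ on the left.

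The gap is that you never close the loop on that factor of $r$, and the calibration you propose would in fact refute the statement as written rather than confirm it: for $u=\phi_a$ with $a\in A$ one has $\deg_X(\phi_a)=q^{r\deg_T a}$, so $r\cdot\log_q\deg_X(\phi_a)=r^2\deg_T a$, while $\deg_T\mathrm{Nrd}_\phi(a)=\deg_T(a^r)=r\deg_T a$. The two sides of the displayed identity differ by a factor of $r$ on every nonconstant $u$, and your own computation pins down which side is correct. So as a proof of the lemma \emph{as stated} the proposal fails at the last step; what it actually establishes is the identity without the leading $r$ (equivalently, with $\deg_X$ replaced by $\#\ker$), which is the version consistently used elsewhere in the paper, e.g.\ in the claim inside Theorem 4.4 where $|\mathcal{O}/u\mathcal{O}|=|A/(\mathrm{Nm}_{K/F}(u))|$ is invoked. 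You should state explicitly that the factor $r$ in the lemma is incompatible with the definitions of $\deg_X$ and $\mathrm{Nrd}_\phi$ given in the paper, rather than leaving the reconciliation as a deferred check.
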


\begin{proof}
See \cite{Gek91}, Lemma 3.1.

\end{proof}
\begin{thm}\label{hilbert}
Fix a basic $J$-invariant for rank-$r$ Drinfeld modules, given a prime ideal $\mathfrak{a}\in A$ with monic generator $a$,

$$\Phi_{J,a}(X,X)=\prod_{\mathcal{O} \textrm{ imaginary $r$-degree order over $\F_q[T]$}}H_{\mathcal{O},J}(X)^{\gamma(\mathcal{O},a)},$$
where 
$$\begin{array}{ll}
\gamma(\mathcal{O},a)=&|\{ u\in \mathcal{O} \mid u \textrm{ primitive and }{\rm Nm}_{K/F}(u)=c\cdot a \textrm{ for some }c\in\F_q^*\}/\mathcal{O}^*|.
\end{array}
$$
Note that $u\in\mathcal{O}$ is primitive if $u\neq d\cdot \beta$ for some non-unit element $d\in A$ and $\beta \in \mathcal{O}$. Moreover, $\Phi_{J,a}(X,X)$ is invariant under ${\rm Gal}(\bar{F}/F)$-action, i.e. $\Phi_{J,a}(X,X)\in A[X]$.
\end{thm}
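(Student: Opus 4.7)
The plan is to show that the roots of $\Phi_{J,a}(X,X)$ are $J$-invariants of CM Drinfeld modules admitting a self $a$-cyclic isogeny, and that each such root's multiplicity corresponds to primitive elements of the endomorphism order $\mathcal{O}$ satisfying a norm condition. First I would observe that if $\phi$ admits a self $a$-cyclic isogeny $u\colon\phi\to\phi$, then $u$ is not a scalar: $\ker u\cong A/(a)$ is cyclic, while $\phi_b$ has non-cyclic kernel $\phi[b]\cong(A/(b))^r$ for $r\geq 2$. Hence $\mathrm{End}(\phi)\supsetneq A$, so by generic-characteristic theory $\mathrm{End}(\phi)$ is an order in an imaginary extension $K/F$; a rank-module analysis of $T_{\fl}(\phi)$ further rules out intermediate CM fields of degree $d\mid r$, $d<r$, since otherwise $\ker u$ would decompose as $r/d$ copies of $\mathcal{O}/u\mathcal{O}$, contradicting cyclicity. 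Thus $\mathcal{O}$ is a rank-$r$ imaginary order.

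Next, fixing such $\phi$ with CM-uniformization $\phi\leftrightarrow\mathbb{C}_\infty/I$, a self-isogeny becomes multiplication by $u\in(I:I)=\mathcal{O}$, and $\ker u\cong u^{-1}I/I\cong\mathcal{O}/u\mathcal{O}$ as $A$-modules. Proposition \ref{norm} applied to the rank-$1$ $\mathcal{O}_{\fl}$-module $T_{\fl}(\phi)$ gives $|\mathcal{O}/u\mathcal{O}|=|\mathrm{Nm}_{K/F}(u)|_{\infty}=q^{\deg_T\mathrm{Nm}(u)}$. I would then establish the key biconditional: $\ker u\cong A/(a)$ if and only if $u$ is primitive in $\mathcal{O}$ and $\mathrm{Nm}_{K/F}(u)=c\cdot a$ for some $c\in\F_q^*$. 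The norm equation matches cardinalities, while primitivity is equivalent to cyclicity of $\mathcal{O}/u\mathcal{O}$ as an $A$-module: a decomposition $u=d\beta$ with non-unit $d\in A$ forces $\mathcal{O}/u\mathcal{O}$ to surject onto $\mathcal{O}/d\mathcal{O}\cong(A/(d))^r$, obstructing cyclicity, whereas a primitive $u$ of norm $c\cdot a$ yields $\mathcal{O}/u\mathcal{O}\cong A/(a)$ by a local analysis at the primes of $\mathcal{O}$ above $\fa$.

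Passing to multiplicities, I would identify two self-isogenies $u,u'$ of $\phi$ as isomorphic copies iff $u'=\epsilon u$ for some $\epsilon\in\mathcal{O}^*=\Aut(\phi)$: scaling by a unit rescales the leading coefficient of the corresponding twisted polynomial and produces an isomorphic copy in the sense of the paper's convention. Hence monic $a$-cyclic self-isogenies of $\phi$, modulo isomorphic copy, biject with the quotient set of cardinality $\gamma(\mathcal{O},a)$, a quantity intrinsic to $\mathcal{O}$ and $a$. Grouping the $(X-J(\phi))$-factors by endomorphism order yields
\[
\Phi_{J,a}(X,X)=\prod_{\mathcal{O}}\Bigl(\prod_{[\phi]\colon\mathrm{End}(\phi)\cong\mathcal{O}}(X-J(\phi))\Bigr)^{\gamma(\mathcal{O},a)}=\prod_{\mathcal{O}}H_{\mathcal{O},J}(X)^{\gamma(\mathcal{O},a)}.
\]
For Galois invariance, $\Gal(\bar F/F)$ permutes CM classes of order $\mathcal{O}$ through the $\mathrm{Pic}(\mathcal{O})$-action, making each $H_{\mathcal{O},J}\in A[X]$ by integrality of CM $J$-invariants, while $\gamma(\mathcal{O},a)$ is intrinsic; therefore $\Phi_{J,a}(X,X)\in A[X]$.

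The main obstacle is controlling $\mathcal{O}/u\mathcal{O}$ when $\mathcal{O}$ is non-maximal: $u\mathcal{O}$ need not be an invertible $\mathcal{O}$-ideal, so the norm equation alone does not pin down its $A$-module isomorphism class. One must combine primitivity with the primality of $\fa$ and a local analysis at each prime of $\mathcal{O}$ above $\fa$, paying attention to the conductor of $\mathcal{O}$ in $\mathcal{O}_K$. A secondary subtlety is that the identification of isomorphic copies with $\mathcal{O}^*$-scaling is clean at generic $\phi$ with $\Aut(\phi)=\F_q^*$, but for special $\phi$ with extra symmetries one must check that no further identifications arise beyond unit scalings.
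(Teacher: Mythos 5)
Your proposal follows essentially the same route as the paper's proof: both hinge on the biconditional that $\mathcal{O}/u\mathcal{O}\cong A/(a)$ exactly when $u$ is primitive with $\mathrm{Nm}_{K/F}(u)=c\cdot a$, then count pairs $(\phi,u)$ modulo $\mathcal{O}^*$-scaling, group the linear factors by endomorphism order, and conclude Galois invariance from the fact that conjugation permutes the CM orders. The only differences are refinements in your favor --- you make explicit that $\mathrm{End}(\phi)$ must be a full degree-$r$ imaginary order and you flag the non-maximal-order subtlety in the ``if'' direction, which the paper dispatches with a one-line appeal to the structure theorem over a PID.
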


\begin{proof}
We begin with the following:

\begin{claim*}
$u\mathcal{O}$ is a cyclic A-sublattice of $\mathcal{O}$ isomorphic to $A/(a)$ if and only if $u$ is primitive and ${\rm Nm}_{K/F}(u)=c\cdot a \textrm{ for some }c\in\F_q^*$
\end{claim*}
\begin{proof}[proof of claim:] 
\begin{enumerate}
\item[$(\Rightarrow)$] 
From the fact that $|A/({\rm Nm}_{K/F}(u))|=|\mathcal{O}/u\mathcal{O}|=|A/(a)|$, we have ${\rm Nm}_{K/F}(u)=c\cdot a \textrm{ for some }c\in\F_q^*$. On the other hand, suppose $u$ is not primitive, then one can write $u=d\cdot \beta$ for some non-unit element $d\in A$ and $\beta \in \mathcal{O}$. Then we have $\mathcal{O}/u\mathcal{O}\subset \mathcal{O}/d\mathcal{O}$, which implies $u\mathcal{O}$ is not a cyclic A-sublattice of $\mathcal{O}$.
\item[$(\Leftarrow)$] Follows from the fundamental theorem for mulde over PID and the fact that ${\rm Nm}_{K/F}(u)=c\cdot a \textrm{ for some }c\in\F_q^*$ with $a\in A$ irreducible. 
\end{enumerate}

\end{proof}

From our definition of $\Phi_{J,a}(X,X)$, its roots are $J(\phi)$ counting with multiplicity that correspond to pairs $(\phi,u)$, up to isomorphic copy $(c^{-1}\phi c, c^{-1}uc)$ over $\mathbb{C}_\infty$. Hence
$$
\begin{array}{ll}
&\Phi_{J,a}(X,X)\\
\ \\
=&\prod_{\mathcal{O} \textrm{ imaginary $r$-degree order over $\F_q[T]$}}\prod_{[\phi] \textrm{ isomorphism class with }{\rm End}(\phi)\cong \mathcal{O}}\left(X-J(\phi)\right)^{\gamma(\mathcal{O},a)}\\
\ \\
			=&\prod_{\mathcal{O} \textrm{ imaginary $r$-degree order over $\F_q[T]$}}H_{\mathcal{O},J}(X)^{\gamma(\mathcal{O},a)}
\end{array}
$$

For $\Phi_{J,a}(X,X)$ is invariant under ${\rm Gal}(\bar{F}/F)$-action, it is enough to prove that for any $\sigma\in{\rm Gal}(\bar{F}/F)$, $$H_{\mathcal{O},J}(X)^\sigma=H_{\mathcal{O}^\sigma,J}(X).$$

Once the equality holds, the Galois conjugate $\mathcal{O}^\sigma$ is an imaginary degree-$r$ extension over $A$ with field of fraction $K^\sigma$. Hence from the definition of $\Phi_{J,a}(X,X)$, when $\mathcal{O}$ runs through all imaginary $r$-degree order, the Galois conjugates $\mathcal{O}^\sigma$ are included, hence $\Phi_{J,a}(X,X)^\sigma=\Phi_{J,a}(X,X)$.

The equality $H_{\mathcal{O},J}(X)^\sigma=H_{\mathcal{O}^\sigma,J}(X)$ follows from the property that  a Drinfeld module $\phi$ satisfies ${\rm End}(\phi)\cong \mathcal{O}$ if and only if ${\rm End}(\phi^\sigma)\cong \mathcal{O}^\sigma$.
\end{proof}

\section{Generalized Volcano Structure}
By considering $\fl$-isogeny graph for rank-$r$ Drinfeld modules with complex multiplication, our computation in previous sections actually correspond to the $J$-invariant of ``vertexes'' having self-horizontal edge. This relation motivates us to describe the shape of $\fl$-isogeny graph for CM Drinfeld modules, and its comparison with the $\ell$-isogeny graph for CM elliptic curves, which has a volcano structure.

In this section, we focus on general rank $r\geqslant2$, $K/F$ is a degree $r$ imaginary extension, and $\fl=(\ell)$ is a prime ideal of $A$. This section is to provide a characterization for imaginary degree-$r$ minimal orders whose corresponding Drinfeld modules have directed`` horizontal'' and ``vertical'' edges. 

\begin{defi}
Let $\fa$ be a nonzero integral $\mathcal{O}$-ideal. The {\bf{Fitting norm}} of $\fa$ over $A$ is the $A$-ideal
$${\rm{N}}_{\mathcal{O}/A}(\fa)={\rm Fitt}_A(\mathcal{O}/A)=\prod_{i=1}^m(d_i),$$
where $\mathcal{O}/A\cong \oplus_{i=1}^mA/(d_i),\textrm{ with }d_i\mid d_{i+1}$.

\end{defi}

\begin{rem}
Below are some properties for fitting norm:
\begin{enumerate}
\item $|\mathcal{O}/\fa|=|A/N_{\mathcal{O}/A}(\fa)|$.
\item Let $\ff:=\{x\in\cO_K\mid x\cO_K\subseteq \cO\}$ be the conductor of $\mathcal{O}$ in $K$. If $(\fa,\ff)=1$, then ${\rm{N}}_{\mathcal{O}/A}(\fa)={\rm{N}}_{K/F}(\fa)$. Here the norm on the right hand side is the field norm of $K/F$.
\item Let $\fa, \fb$ be $\mathcal{O}$-ideals with $(\fa,\fb)=1$, then ${\rm{N}}_{\mathcal{O}/A}(\fa\fb)={\rm{N}}_{\mathcal{O}/A}(\fa){\rm{N}}_{\mathcal{O}/A}(\fb).$ 

\end{enumerate}
\end{rem}

\begin{rem}
Let $\cO$ be an order of $K$ with conductor $\ff:=\{x\in\cO_K\mid x\cO_K\subset \cO\}$, unlike rank-$2$ case, we don't always have the equality $$\cO=A+\ff\cO_K.$$
\begin{cx}
$A=\F_7[T]$, $F=\F_7(T)$, and $K=F(\theta)$ where $\theta^4=T$. Consider the prime $\fp=(T-3)$ of $A$, there is a unique prime $\mathfrak{P}\mid \fp$ in $K$ with inertia degree $f(\mathfrak{P}\mid \fp)=4$, so $\cO_K/\mathfrak{P}\cong\F_{7^4}$.

Set $\ff=\fp\cO_K$, and consider the order $$\cO:=\{x\in \cO_K\mid x\mod \mathfrak{P} \textrm{ lies in }\F_{7^2} \textrm{ and }x\in \cO_{K,\fL}\ \forall \textrm{ prime } \fL\neq \mathfrak{P}\}.$$

Then conductor of $\cO$ is exactly $\ff$, but $A+\ff\cO_K\subsetneq \cO\subsetneq\cO_K$

\end{cx}
\end{rem}

\begin{defi}
An order $\cO$ of $K$ with conductor $\ff$ is called minimal if $\cO=A+\ff\cO_K$. 
\end{defi}

From now on, we consider rank-$r$ Drinfeld modules CM by a minimal order of $K$. We firstly characterize a higher rank version for horizontal $\fl^m$-isogenies.

\begin{thm}\label{hor}

Let $A=\F_q[T]$, $F=\F_q(T)$, $K/F$ be an imaginary degree-$r$ extension, and $\cO\subset \cO_K$ be a minimal order with conductor $\ff$. Let $\phi$ be a rank-$r$ Drinfeld module with ${\rm End}(\phi)\cong \cO$, then TFAE:

\begin{enumerate}
\item[(i)] There exists an $\fl$-cyclic isogeny $u:\phi\rightarrow \phi'$ with ${\rm ker}(u)\cong A/\fl$ and ${\rm End}(\phi')\cong \cO$.

\item[(ii)] $\fl \nmid \ff$ (so $\cO$ is locally maximal at $\fl$), and $\fl\cO_K=\prod_{i=1}^g\mathfrak{L}_i^{e_i}$ with $\mathfrak{L}_i\mid \fl$ prime of $\cO_K$, where some inertia degree ${f}(\mathfrak{L}_i\mid \fl)$ is equal to $1$.

\item[(iii)] There existis an invertible ideal $\fa$ of $\cO$ such that ${\rm N}_{K/F}(\fa)=\fl$.

\end{enumerate}
\end{thm}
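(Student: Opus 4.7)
The plan is to establish (iii)$\Leftrightarrow$(i) via the CM uniformization of Section 2.2 and (iii)$\Leftrightarrow$(ii) by local commutative-algebra arguments on the minimal order $\cO$. First I would fix a CM uniformization $\phi \cong \phi_I$ for some fractional $\cO$-ideal $I\subset K\hookrightarrow \mathbb{C}_\infty$, so that for any $\cO$-ideal $\fa$ the construction of Section 2.2 yields an isogeny $\iota_\fa:\phi_I\to \phi_{\fa^{-1}I}$ with $\ker(\iota_\fa)\cong \cO/\fa$ as $\cO$-modules and endomorphism ring $(\fa:\fa)$ on the target. Combined with $\phi[\fl]\cong \cO/\fl\cO$, this dictionary reduces the statement to ideal theory of $\cO$.

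For (iii)$\Rightarrow$(i), starting from an invertible $\fa$ with $\mathrm{N}_{K/F}(\fa)=\fl$, I would invoke that invertible ideals satisfy $(\fa:\fa)=\cO$, producing a horizontal target. Since invertibility forces $\fa$ to be coprime to $\ff$, the Fitting-norm remark identifies $\mathrm{N}_{\cO/A}(\fa)$ with $\mathrm{N}_{K/F}(\fa)=\fl$, and primality of $\fl$ collapses the elementary-divisor decomposition of $\cO/\fa$ to $A/\fl$. Hence $\iota_\fa$ is the required horizontal $\fl$-cyclic isogeny. Conversely, for (i)$\Rightarrow$(iii), given a horizontal $\fl$-cyclic $u:\phi\to \phi'$, the fact that $\mathrm{End}(\phi)\cong\cO\cong\mathrm{End}(\phi')$ lets every $\alpha\in\cO$ act on both sides and descend through $u$, forcing $\cO\cdot\ker(u)\subseteq \ker(u)$. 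So $\ker(u)\subset \phi[\fl]\cong \cO/\fl\cO$ is cut out by an $\cO$-ideal $\fa\supseteq \fl\cO$ with $\cO/\fa\cong A/\fl$, and writing $\phi'\cong \phi_{\fa^{-1}I}$ gives $(\fa:\fa)=\cO$; one then argues locally at $\fl$, using minimality of $\cO$, that this ideal is automatically invertible with $\mathrm{N}_{K/F}(\fa)=\fl$.

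For (ii)$\Leftrightarrow$(iii), when $\fl\nmid \ff$ the localization $\cO_\fl$ equals $(\cO_K)_\fl$, so $\cO$-ideals above $\fl$ correspond bijectively to $\cO_K$-ideals above $\fl$. Choosing a prime $\mathfrak{L}_i\mid \fl$ with $f(\mathfrak{L}_i\mid \fl)=1$ and setting $\fa := \mathfrak{L}_i\cap\cO$ gives an invertible ideal with residue field $\mathbb{F}_{q^{\deg \fl}}$, hence $\mathrm{N}_{K/F}(\fa)=\fl$. Conversely, from an invertible $\fa$ of prime norm $\fl$ I would first verify that $\fl\nmid \ff$ (using the minimality hypothesis to rule out invertibility in the non-Dedekind local rings $\cO_\fl\subsetneq (\cO_K)_\fl$), and then read off from $\fa\cO_K$ a prime of $\cO_K$ above $\fl$ with residue degree one.

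The hard part will be the interplay between horizontality (or invertibility) and the conductor: the steps (i)$\Rightarrow$(iii) and (iii)$\Rightarrow$(ii) both require ruling out that an invertible ideal of norm $\fl$, or equivalently a horizontal $\fl$-cyclic isogeny, can exist when $\fl$ divides $\ff$. This is precisely where the hypothesis that $\cO$ is \emph{minimal}, i.e.\ $\cO=A+\ff\cO_K$, must be exploited to analyze the $\cO_\fl$-module structure of $\phi[\fl]$ and conclude that $(\fa:\fa)=\cO$ together with the cyclic quotient $\cO/\fa\cong A/\fl$ forces invertibility. Remark 5.4 shows that without minimality this control genuinely fails.
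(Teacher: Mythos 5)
Your proposal is correct and follows essentially the same route as the paper: the CM ideal--isogeny dictionary ($\ker(\iota_\fa)\cong\cO/\fa$, $\mathrm{End}(\phi_\fa)=(\fa:\fa)$), the key fact that for a minimal order an integral ideal is invertible iff coprime to the conductor (the paper isolates this as a standalone lemma), the norm count collapsing $\cO/\fa$ to $A/\fl$, and the construction $\fa=\mathfrak{L}\cap\cO$ for a prime of residue degree one. The only cosmetic difference is that you organize the equivalences as (i)$\Leftrightarrow$(iii) and (ii)$\Leftrightarrow$(iii) while the paper runs the cycle (i)$\Rightarrow$(iii)$\Rightarrow$(ii)$\Rightarrow$(i).
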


\begin{proof}
We begin with the following lemma:

\begin{lem}\label{coprime}
Let $K/F$ be a finite separable extension, and $\cO\subset\cO_K$ be a minimal order of conductor $\ff\subset\cO_K$. For an integral $\cO$-ideal $\fa$, $$\fa \textrm{ is invertible in }\cO \textrm{ if and only if }\fa \textrm{ is coprime to } \ff$$
\end{lem}
\begin{proof}
Write $\cO=A+\ff\cO_K$, for each prime $\fp$ of $A$, if $\fp\nmid \ff$, then the localization $\cO_\fp=\cO_{K,\fp}$ is a product of discrete valuation rings. Thus any non-zero integral ideal $\fa$ in $\cO$ has $$(\fa_\fp,\ff_\fp)_\fp:=\fa_\fp+(\ff_\fp\cap \cO_\fp)=\fa_\fp+\cO_\fp=\cO_\fp.$$
On the other hand, for prime $\fp\mid \ff$, an integral and invertible $\cO$-ideal $\fa$ satisfies $\fa_\fp=\cO_\fp$ from the fact that $\cO_\fp$ is a local ring. Therefore, $$(\fa_\fp,\ff_\fp)_\fp=\fa_\fp+(\ff_\fp\cap \cO_\fp)=\cO_\fp.$$
Combining the two cases, we have $(\fa_\fp,\ff_\fp)_\fp=\cO_\fp$ for any prime ideal $\fp$ of $A$, this implies $(\fa,\fp)=\cO$. Thus $\fa$ is coprime to $\ff$. The converse statement is a well-known result.

\end{proof}

\begin{enumerate}
\item[(i)$\Rightarrow$(iii)]

Let $u:\phi\rightarrow \phi'$ be a $\fl$-cyclic isogeny with ${\rm End}(\phi')\cong \cO$, note that ${\rm ker}(u)\cong A/\fl$ as $A$-modules. Consider $$\fa:={\rm Ann}_\cO({\rm ker}(u))=\left\{\alpha\in \cO\mid \phi_\alpha|_{{\rm ker}(u)}=0\right\},$$
one can check that $\fa$ is an $\cO$-ideal. Moreover, as ${\rm End}(\phi)\cong\cO\cong {\rm End}(\phi')$ and the fact that $\phi'\cong\phi_\fa:=\phi/{\rm ker}(u)$,  one gets the right order $(\fa:\fa)=\{x\in K \mid x\fa\subset\fa\}$ is equal to ${\rm End}(\phi_\fa)\cong\cO$. Thus $\fa$ is an invertible ideal in $\cO$. From Lemma \ref{coprime}, $\cO/\fa\cong\phi[\fa]\cong{\rm ker}(u)\cong A/\fl$, and the fact that $|A/{\rm N}_{K/F}(\fa)|=|\cO/\fa|$ we have ${\rm N}_{K/F}(\fa)=\fl$

\item[(iii)$\Rightarrow$(ii)]
From Lemma \ref{coprime}, it remains to check the inertia degree, we write $\fl\cO_K=\prod_{i=1}^g\mathfrak{L}_i^{e_i}$. From the norm equality ${\rm N}_{K/F}(\fa)=\fl$, one can deduce that $\fa\cO_K$ is supported on some prime $\mathfrak{L}_j\mid \fl$ with inertia degree $f(\mathfrak{L}_j\mid \fl)=1$.

\item[(ii)$\Rightarrow$(i)]

Let $\mathfrak{L}$ be a prime of $cO_K$ above $\fl$ with inertia degree $f(\mathfrak{L}\mid \fl)=1$, we consider the $\cO$-ideal $$\fa:=\mathfrak{L}\cap \cO.$$
Because  $f(\mathfrak{L}\mid \fl)=1$ and $\fl\nmid\ff$, we have $\fa$ is invertible in $\cO$ and ${\rm N}_{K/F}(\fa)=\fl$. Thus $\fa$ induces an isogeny $\iota_\fa:\phi\rightarrow\phi_\fa$ with ${\rm ker}(u)\cong\cO/\fa\cong A/{\rm N}_{K/F}(\fa)=A/\fl$, and  ${\rm End}(\phi_\fa)=(\fa:\fa)=\cO.$

\end{enumerate}
\end{proof}

In a more general situation, the types of cyclic and non-cyclic $\fl$-power horizontal isogenies from $\phi$ are classified in the following theorem:

\begin{thm}\label{lmhor}
Let $\cO\subset\cO_K$ be a minimal order with conductor $\ff$. Let $\fl\cO_K=\prod_{i=1}^g\mathfrak{L}_i^{e_i}$ with inertia degree $f_i:=f(\mathfrak{L}_i\mid \fl)$. For each $m\geqslant 1$, the $\fl^m$-isogenies $\phi\rightarrow \phi'$ that satisfy ${\rm End}(\phi')\cong \cO$ correspond to invertible $\cO$-ideals $\fb$ with
$$\mathfrak{b}\cO_K=\prod_{i=1}^g\mathfrak{L}_i^{m_i}\ ,m_i\in\mathbb{Z}_{\geqslant 0},$$
with ${\rm N}_{K/F}(\mathfrak{b})=\fl^m$, i.e. $\sum_{i=1}^g f_im_i=m$. For such an ideal $\mathfrak{b}$ we have the induced isogeny $\iota_{\mathfrak{b}}:\phi\rightarrow \phi_{\mathfrak{b}}$ satisfies $${\rm ker}(\iota_\fb)\cong\oplus_{i=1}^g\cO/\mathfrak{L}_i^{m_i}\cong \oplus_{i=1}^g A/\fl^{f_im_i}.$$
In particular, 
\begin{enumerate}
\item[$(i)$]$\iota_\fb$ is $\fl^m$-cyclic if and only if there is exactly one index $i$ having $m_i>0$, which forces $f_i\mid m$.
\item[$(ii)$]$\iota_\fb$ is a non-cyclic $\fl^m$-isogeny if and only if there are at least two $m_i>0$ from the equation $\sum_{i=1}^g f_im_i=m$.
\item[$(iii)$] $\iota_\fb$ is a self isogeny, i.e. $\phi_\fb\cong\phi$ , if and only if $\fb$ is principal in $\cO$.
\end{enumerate}
\end{thm}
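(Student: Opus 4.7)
The strategy is to extend the ideal-theoretic correspondence underlying Theorem \ref{hor} from cyclic to general $\fl^m$-isogenies, translating the classification of horizontal isogenies $\phi \to \phi'$ with ${\rm End}(\phi') \cong \cO$ into one of invertible $\cO$-ideals of norm $\fl^m$, and then factoring such ideals in $\cO_K$ using the coprimality with $\ff$ guaranteed by Lemma \ref{coprime}.

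First I would set up the bijection. Given an $\fl^m$-isogeny $u:\phi\to\phi'$ with ${\rm End}(\phi') \cong \cO$, define $\fb := {\rm Ann}_\cO({\rm ker}(u))$. Mirroring the (i)$\Rightarrow$(iii) step of Theorem \ref{hor}, the right order $(\fb:\fb) = {\rm End}(\phi_\fb) \cong \cO$, so $\fb$ is an invertible $\cO$-ideal; conversely any invertible $\cO$-ideal $\fb$ with support above $\fl$ produces $\iota_\fb:\phi \to \phi_\fb$ with the desired endomorphism ring via the ideal-isogeny construction. By Lemma \ref{coprime}, $\fb$ is coprime to $\ff$, and because ${\rm ker}(u) \subset \phi[\fl^m]$ forces $\fb \supset \fl^m\cO$, the factorization in $\cO_K$ is supported on the primes $\fL_i$ above $\fl$, i.e.\ $\fb\cO_K = \prod_i \fL_i^{m_i}$ with $m_i \geqslant 0$.

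Next, the norm equation. Coprimality with $\ff$ yields $\cO/\fb \cong \cO_K/\fb\cO_K$, and CRT gives $\cO_K/\fb\cO_K \cong \bigoplus_i \cO_K/\fL_i^{m_i}$. Comparing cardinalities with $|{\rm ker}(u)| = |A/\fl^m| = q^{dm}$ produces $\sum_i f_i m_i = m$, equivalently ${\rm N}_{K/F}(\fb) = \fl^m$. This yields the claimed $A$-module decomposition of ${\rm ker}(\iota_\fb)$, the identification of each local summand being a local computation at $\fL_i$. The three subcases then follow mechanically: $\iota_\fb$ is $\fl^m$-cyclic iff the decomposition collapses to a single nonzero summand, forcing $m = f_i m_i$ for that index and hence $f_i \mid m$; two or more nonzero $m_i$ give a non-cyclic kernel; and $\phi_\fb \cong \phi$ iff $\fb^{-1}I \cong I$ as $\cO$-modules in the CM uniformization, i.e.\ $\fb$ is principal in $\cO$.

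The main obstacle will be the identification of each local summand $\cO_K/\fL_i^{m_i}$ with $A/\fl^{f_i m_i}$ as an $A$-module when $\fl$ is ramified in $\cO_K$ or has $f_i > 1$. Cardinalities match automatically, but pinning down the correct cyclic $A$-module structure requires a careful local analysis of $\cO_{K,\fL_i}$ as a free $A_\fl$-module together with the interaction of the $\fl$-adic and $\fL_i$-adic filtrations. Once this local structure is settled, the cyclicity criterion in (i) becomes immediate and (ii)--(iii) are formal consequences of the invertible-ideal dictionary combined with the ideal-isogeny construction recalled in Section 2.2.
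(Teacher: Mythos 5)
Your overall route is the same as the paper's: both arguments pass to the rank-one Drinfeld $\cO$-module picture, identify a horizontal $\fl^m$-isogeny with an invertible $\cO$-ideal $\fb$ coprime to $\ff$ (Lemma \ref{coprime}), factor $\fb\cO_K$ over the primes above $\fl$, and extract $\sum_i f_im_i=m$ by comparing $|\cO/\fb|$ with $|A/\fl^m|$; part (iii) is handled identically via homothety of the CM lattices. Your construction of $\fb$ as ${\rm Ann}_\cO(\ker u)$, mirroring the (i)$\Rightarrow$(iii) step of Theorem \ref{hor}, is a reasonable way to make the correspondence explicit, which the paper simply asserts.

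The step you defer as ``the main obstacle'' is, however, a genuine gap, and it cannot be closed in the form you (and the statement) need. The paper's proof dispatches it in one line, claiming $\cO/\fL_i^{m_i}\cong A/{\rm N}_{K/F}(\fL_i)^{m_i}\cong A/\fl^{f_im_i}$, but in general only the cardinalities agree. By Nakayama, the minimal number of $A_\fl$-generators of $\cO_{K,\fL_i}/\fL_i^{m_i}\cO_{K,\fL_i}$ is $f_i\cdot\min(e_i,m_i)$, so this quotient is a cyclic $A$-module only when $f_i=1$ and either $e_i=1$ or $m_i=1$. Already for $m_i=1$ and $f_i\geqslant 2$ one has $\cO_K/\fL_i\cong(A/\fl)^{f_i}$, not $A/\fl^{f_i}$ (the residue field is killed by $\fl$); likewise for $e_i\geqslant 2$ and $m_i=2$ one gets $(A/\fl)^2$ rather than $A/\fl^{2}$. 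Hence the displayed decomposition of $\ker(\iota_\fb)$, and with it the cyclicity criterion in (i), needs correction: a single index with $m_i>0$ and $f_i\mid m$ does not suffice for cyclicity; one also needs $f_i=1$ and, when $m_i\geqslant 2$, $e_i=1$. Your instinct to flag the local identification was exactly right, but ``once this local structure is settled'' is not a step that can be completed as stated, so the proposal (like the paper's one-line assertion) does not prove the theorem in the generality claimed.
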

\begin{proof}
First of all, a rank-$r$ Drinfeld $A$-module CM by $\cO$ can also be viewed as a rank-$1$ Drinfeld $\cO$-module. Thus a horizontal $\fl^m$-isogeny $u:\phi\rightarrow \phi'$ with ${\rm End}(\phi')\cong \cO$ corresponds to an invertible $\cO$-ideal $\fb$ with $(\fb,\ff)=\cO$, and its induced $\fl^m$-isogeny $\iota_\fb:\phi\rightarrow \phi_\fb$, where $\phi_\fb\cong\phi'$ and $\cO\cong {\rm End}(\phi_\fb)=(\fb:\fb)$. As $\deg(\iota_\fb)=|\cO/\fb|=|A/{\rm N}_{K/F}(\fb)|=|A/\fl^m|$, we have the ideal $\fb\cO_K$ is a product of prime ideals $\mathfrak{L}_i$ of $\cO_K$ stand above $\fl$, i.e. $\mathfrak{b}\cO_K=\prod_{i=1}^g\mathfrak{L}_i^{m_i}\ ,m_i\in\mathbb{Z}_{\geqslant 0}.$ Now ${\rm N}_{K/F}(\fb)=\fl^m$ implies $\sum_{i=1}^g f_im_i=m$, and the isomorphisms on ${\rm ker}(\iota_\fb)$ follows from the fact that $\cO/\mathfrak{L}_i^{m_i}\cong A/{\rm N}_{K/F}(\mathfrak{L}_i)^{m_i}\cong A/\fl^{f_im_i}$.

Next, we consider a special case where $\iota_\fb$ is a $\fl^m$-cyclic isogeny. This happens if and only if ${\rm ker}(\iota_\fb)\cong A/\fl^m$, which means the factorization $\mathfrak{b}\cO_K=\prod_{i=1}^g\mathfrak{L}_i^{m_i}$ has only one non-trivial piece $\fb\cO_K=\mathfrak{L}_i^{m_i}$, and $f_im_i=m$. 

For the case $\iota_\fb$ is a non-cyclic $\fl^m$-isogeny. It happens if and only if ${\rm ker}(\iota_\fb)$ as an $A$-module has at least two pieces. In other words, the factorization $\mathfrak{b}\cO_K=\prod_{i=1}^g\mathfrak{L}_i^{m_i}$ has at least two $m_i>0$, and $\sum_{i=1}^g f_im_i=m$.

Finally, for the case $\iota_\fb$ is a self isogeny. Write $\phi=\phi_I$ for a fraction $\cO$-ideal $I$. Then this happens if and only if $\phi_{\fb^{-1}I}\cong\phi_I$, which means $\fb^{-1}I=uI$ for some $u\in K^*$. By multiplying $I^{-1}$ on both sides, one gets $\fb^{-1}=u\cO$, so $\fb$ is principal in $\cO$.
\end{proof}

Theorem \ref{hor} gives an equivalence conditions on horizontal $\fl$-cyclic isogeny admitted from a Drinfeld module CM by a minimal order $\cO$. To consider ``vertical'' isogenies, we need to define a suitable level for ${\rm End}(\phi)$ provided a rank-r Drinfeld module $\phi$ with CM by $K$.

\begin{prop}\label{cond}

Let $\fl\cO_K=\prod_{i=1}^g\fL_i^{e_i}$. By localization at $\fl$, one has $\cO_{K,\fl}\cong\prod_{\fL\mid \fl}\cO_{K,\fL}$. Let $t=(t_L)_L\in\mathbb{Z}_{\geqslant 0}^g$ and define $R_{\fl,t}=A_\fl+\prod_{\fL\mid \fl}L^{t_L}\cO_{K,\fL}$, then we have 

\begin{enumerate}
\item[(i)] $R_{\fl,t}\subset\prod_{\fL\mid \fl}\cO_{K,\fL}$ is a local ring with maximal ideal $\mathfrak{m}_t=\fl A_\fl+\prod_{\fL\mid \fl}L^{t_L}\cO_{K,\fL}$

\item[(ii)] The right order $(\mathfrak{m}_t:\mathfrak{m}_t)=\{x\in \prod_{\fL\mid \fl}K_\fL\mid \mathfrak{m}_tx\subset\mathfrak{m}_t\}$ is equal to $A_\fl+\prod_{\fL\mid \fl}L^{{\rm max}(t_L-1,0)}\cO_{K,\fL}$
\end{enumerate}

\end{prop}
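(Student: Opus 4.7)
For part (i), the plan is to verify that $\mathfrak{m}_t$ is a proper ideal of $R_{\fl,t}$ with residue field a field, and then check uniqueness of the maximal ideal. Set $J := \prod_{\fL\mid\fl}\fL^{t_L}\cO_{K,\fL}$, so every element of $R_{\fl,t}$ writes as $a+j$ with $a\in A_\fl$ and $j\in J$. The expansion $(a+j)(c+z)=ac+az+jc+jz$ shows $\mathfrak{m}_t$ is an ideal: $ac\in\fl A_\fl$, while the remaining three terms lie in $J$ because $J$ is an $\cO_{K,\fl}$-ideal. The composition $A_\fl\hookrightarrow R_{\fl,t}\twoheadrightarrow R_{\fl,t}/\mathfrak{m}_t$ is surjective (since $J\subset\mathfrak{m}_t$), with kernel $A_\fl\cap\mathfrak{m}_t=\fl A_\fl$ (checked by observing that the diagonal embedding $A_\fl\hookrightarrow\cO_{K,\fl}$ intersects $J$ in $\fl^{M}A_\fl$ where $M=\max_\fL\lceil t_L/e_L\rceil\geq 1$, which is contained in $\fl A_\fl$). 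This yields residue field $A_\fl/\fl A_\fl$, so $\mathfrak{m}_t$ is maximal. For locality, any $r=a+j\notin\mathfrak{m}_t$ has $a\in A_\fl^\times$, and then $r=a(1+a^{-1}j)$ is a unit, since $(1+a^{-1}j)^{-1}=1+j'$ with $j'\in J$ obtained by inverting via the geometric series in the $\fl$-adic topology of $\cO_{K,\fl}$.

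For part (ii), set $R'_t := A_\fl + J'$ with $J' = \prod_{\fL\mid\fl}\fL^{\max(t_L-1,0)}\cO_{K,\fL}$. The forward inclusion $R'_t\subset(\mathfrak{m}_t:\mathfrak{m}_t)$ is a direct valuation check: expanding $xm=(a+y)(c+z)=ac+az+yc+yz$, the terms $ac\in\fl A_\fl$ and $az,yz\in J$ are immediate. The crucial calculation is $yc\in J$: since $v_\fL(y)\geq\max(t_L-1,0)$ and $v_\fL(c)\geq e_L\geq 1$, one gets $v_\fL(yc)\geq\max(t_L-1,0)+1\geq t_L$, placing $yc$ in $J$.

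For the reverse inclusion $(\mathfrak{m}_t:\mathfrak{m}_t)\subset R'_t$, I would take $x=(x_\fL)\in\prod_\fL K_\fL$ with $x\mathfrak{m}_t\subset\mathfrak{m}_t$ and proceed in two stages. First, from $x\ell\in\mathfrak{m}_t$ (where $\ell$ generates $\fl A_\fl$), write $x\ell=c_0+z_0$ and divide by $\ell$ to get a preliminary decomposition $x=a+y$ with $a\in A_\fl$ and $y\in\prod_\fL\fL^{t_L-e_L}\cO_{K,\fL}$. Second, to sharpen the valuation of $y$, apply $x$ to concentrated elements $\xi_{L_0}=(0,\ldots,\pi_{\fL_0}^{t_{L_0}},\ldots,0)\in J$ supported only at $\fL_0$, and write $x\xi_{L_0}=c'+z'$ in $\fl A_\fl+J$. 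The constraint at coordinates $\fL\neq\fL_0$ forces the diagonal element $c'$ to satisfy $v_\fL(c')\geq t_L$, which in turn places $v_{\fL_0}(c')\geq e_{\fL_0}\cdot\lceil t_L/e_L\rceil$ for those $\fL$; combining with the equation at $\fL_0$ then extracts $v_{\fL_0}(y_{L_0})\geq\max(t_{L_0}-1,0)$.

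The main obstacle will be the reverse inclusion in (ii): because $\fl A_\fl$ embeds diagonally in $\prod_\fL K_\fL$ while $J$ is a direct product, the valuation constraints at different primes are coupled through the common correction term $c'$, and one must coordinate these choices carefully to obtain the sharp bound on $y_{L_0}$. Handling ramification $e_L>1$ and edge cases where some $t_L$ is small (with $t_L=0$ breaking locality in (i) when $g>1$, which suggests a tacit assumption $t_L\geq 1$ throughout) will require extra bookkeeping.
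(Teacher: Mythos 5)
Your part (i) and the forward inclusion of part (ii) follow the paper's proof essentially verbatim: the paper also factors a non-$\mathfrak{m}_t$ element as $a(1+a^{-1}u)$ and establishes the forward inclusion by the same valuation count $v_\fL(u\ell)\geq\max(t_L-1,0)+e_\fL\geq t_L$. Your geometric-series justification that $1+a^{-1}j$ is a unit is actually more complete than the paper's (which only observes that its residue is $1$, an argument that presupposes locality), and your observation that (i) fails when some $t_L=0$ and $g>1$ (a nontrivial idempotent then lies in $R_{\fl,t}$, and its complement is a non-unit outside $\mathfrak{m}_t$) is a correct catch that the paper does not make.

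The genuine gap is the reverse inclusion in (ii), which you rightly flag as the main obstacle but do not close --- and it cannot be closed in the stated generality, because that inclusion is false at ramified primes. Take $g=1$, $e_\fL=2$, $f_\fL=1$ (so $r=2$ and $\fl$ is totally ramified), $t=3$, and let $\pi$ be a uniformizer of $\cO_{K,\fL}$. Then $v_\fL(\pi\cdot\ell a)\geq 3$ and $\pi\cdot\fL^{3}\cO_{K,\fL}\subset\fL^{4}\cO_{K,\fL}$, so $\pi\in(\mathfrak{m}_3:\mathfrak{m}_3)$; but every element $a+z$ of $A_\fl+\fL^{2}\cO_{K,\fL}$ has $v_\fL(a+z)$ equal to the even number $v_\fL(a)$ or else $\geq 2$, never equal to $1$, so $\pi\notin A_\fl+\fL^{2}\cO_{K,\fL}$. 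The root cause is exactly the coupling you identify: $v_\fL(\ell)=e_\fL$, so dividing by $\ell$ improves valuations by $e_\fL$ rather than by $1$, and your stage-one bound $y\in\prod_\fL\fL^{t_L-e_\fL}\cO_{K,\fL}$ is in fact sharp --- no second stage with concentrated elements can promote it to $t_L-1$. (The paper's own proof breaks at the corresponding point: its appeal to ``Hensel's lifting lemma'' to produce $a\in A_\fl$ with $x_\fL\equiv a\bmod\fL^{n_\fL}$ from agreement of the residues $\bar x_\fL$ modulo $\fL$ fails already for $x=\pi$ above.) In the unramified case $e_\fL=1$ with all $t_L\geq1$, your stage one alone finishes the reverse inclusion, since $x\ell=c_0+z_0$ gives $x=c_0/\ell+z_0/\ell$ with $v_\fL(z_0/\ell)\geq t_L-1$; this is cleaner than the paper's detour through the characteristic polynomial and reduction modulo $\prod_\fL\fL$. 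So both the proposition and your argument are salvageable under the additional hypothesis that $\fl$ is unramified in $K$, or after replacing the exponent $\max(t_L-1,0)$ by $\max(t_L-e_\fL,0)$ (with the attendant changes to Theorem \ref{voc} and Proposition \ref{deg}).
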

\begin{proof}
\begin{enumerate}
\item[(i)] One can easily observe that $\mathfrak{m}_t$ is a maximal ideal of $R_{\fl,t}$ because the isomorphism of quotient rings $f: R_{\fl,t}/\mathfrak{m}_t\cong A_\fl/\fl A_{\fl}$. To prove $R_{\fl,t}$ is local, it is enough to prove that $x\in R_{\fl,t}-\mathfrak{m}_t$ is a unit. We write $x\in R_{\fl,t}-\mathfrak{m}_t$ as $x=a+u$, where $a\in A_\fl-\fl A_\fl$ and $u\in \prod_{\fL\mid \fl}L^{t_L}\cO_{K,\fL}$. Thus $a\in A_\fl^*$ and so $x=a(1+a^{-1}u)$. One then check that $1+a^{-1}u$ is a unit in $R_{\fl,t}$ because its image under $f$ is $1\in A_\fl/\fl A_\fl$. 
\item[(ii)]

Take $x=a+u\in A_\fl+\prod_{\fL\mid \fl}L^{{\rm max}(t_L-1,0)}\cO_{K,\fL}$, we firstly prove that $\mathfrak{m}_tx\subset\mathfrak{m}_t$. The product $x\mathfrak{m}_t=a\fl A_\fl+\left[u\fl A_\fl+(a+u)\prod_{\fL\mid \fl}L^{t_\fL}\cO_{K,\fL}\right] \subset \fl A_\fl+\prod_{\fL\mid \fl}L^{t_\fL}\cO_{K,\fL}=\mathfrak{m}_t$.

Conversely, let $x\in (\mathfrak{m}_t:\mathfrak{m}_t)$, one can deduce that as $A_\fl$-modules, the multiplication-by-$x$ map induces $A_\fl$ linear map on $\mathfrak{m}_t$ whose characteristic polynomial is defined over $A_\fl$. Hence $x\in\cO_{K,\fl}$.

Write $x=(x_\fL)_{\fL\mid\fl}\in \cO_{K,\fl}$, consider the diagonal reduction $\pi:R_{\fl,t}\rightarrow \prod_{\fL\mid \fl}\cO_{K,\fL}/\fL$, it has kernel ${\rm ker}(\pi)=\mathfrak{m}_t$ and $R_{\fl,t}/\mathfrak{m}_t\cong A_\fl/\fl A_\fl$. As $x\in(\mathfrak{m}_t:\mathfrak{m}_t)$, the multiplication-by-$x$ map induces a well-define $A_\fl$-linear map $\bar{\lambda}_x:R_{\fl,t}/\mathfrak{m}_t\rightarrow R_{\fl,t}/\mathfrak{m}_t$. Let $\rho:\cO_{K,\fl}\rightarrow\prod_{\fL\mid \fl}\cO_{K,\fL}/\fL$ and set $\rho(x)=(\bar{x}_\fL)_\fL$, then for any $a\in A_\fl$, we have
$$\bar{\lambda}_x(\pi(a))=\pi(xa)=\rho(x)\pi(a)=(\bar{x}_\fL\bar{a})_\fL\in A_\fl/\fl A_\fl,$$
thus all $\fL$-components of $(\bar{x}_\fL)_\fL$ are equal.  Set $n_\fL:={\rm max}\{t_\fL-1,0\}$ and $N:={\rm max}_{\fL\mid \fl}\{n_\fL\}$, consider the map $\iota:A_\fl/\fl^N\rightarrow \prod_{\fL\mid\fl}\cO_{K,\fL}/\fL^{n_\fL}$ induced from the diagonal embedding of $A_\fl$ into $\prod_{\fL\mid \fl}\cO_{K,\fL}$. Because $\bar{x}_\fL$ are equal for all $\fL\mid \fl$, one can apply Hensel's lifting lemma to find an element $a\in A_\fl$ such that $$x_\fL\equiv a \mod \fL^{n_\fL} \textrm{ for all }\fL\mid \fl.$$ Thus we have $x-a\in\prod_{\fL\mid \fl}L^{{\rm max}(t_L-1,0)}\cO_{K,\fL}$, the proof is now complete.

\end{enumerate}
\end{proof}

\begin{defi}\label{lev}
Let $\fl$ be a prime ideal of $A$ with $\fl\cO_K=\prod_{i=1}^g\fL_i^{e_i}$. 
\begin{enumerate}
\item[(i)] Let $\phi$ be a rank-$r$ Drinfeld module CM by a minimal order $\cO$ of conductor $\ff$, consider the prime factorization of $\ff$ as $$\ff\cO_K=\ff_0\cdot \prod_{i=1}^g\fL_i^{t_i},$$
where $\ff_0$ is coprime to $\fl$. Define the level of $\phi$ to be the $g$-tuple $(t_{\fL_1},\cdots,t_{\fL_g})\in\mathbb{Z}_{\geqslant 0}^g$, and we say $\phi$ has uniform level $t$ if $t_{\fL_i}=t$  for $1\leqslant i\leqslant g$.

\item[(ii)] Define the $\fl$-cyclic isogeny graph $\mathcal{G}_{\fl,K}$ as a directed multigraph as follows: \\
The vertex set $\mathcal{V}$ of $\mathcal{G}_{\fl,K}$ is the set of $\mathbb{C}_\infty$-isomorphism classes of rank-$r$ Drinfeld modules $[\phi]$ CM by $K$. A directed edge from $[\phi_1]$ to $[\phi_2]$ is a cyclic $\fl$-isogeny $u:\phi_1\rightarrow \phi_2$ up to isomorphic copy $c^{-1}uc$ over $\mathbb{C}_\infty$.

\item[(iii)] In $\mathcal{G}_{\fl, K}$ we define its minimal, and uniform component $\mathcal{G}_{\fl,K}^{\rm min, unif}$ to be the subgraph induced by vertices $[\phi]$ CM by minimal orders in $K$ having uniform level. 

\item[(iv)] In $\mathcal{G}_{\fl,K}^{\rm min, unif}$, an edge $u:\phi_1\rightarrow \phi_2$ is called horizontal if the levels $t_1$ of$[\phi_1]$ and $t_2$ of $[\phi_2]$ are the same. It is called ascending if $t_2>t_1$, and descending if $t_2<t_1$.
\end{enumerate}
\end{defi}

The following theorem shows a volcano-like structure for the graph $\mathcal{G}_{\fl,K}^{\rm min, unif}$:

\begin{thm}\label{voc}
\begin{enumerate}
\item[(i)]
In $\mathcal{G}_{\fl,K}^{\rm min, unif}$, vertex  $[\phi]$ with level $t\geqslant 1$ has only one ascending $\fl$-cyclic isogeny to a vertex $[\phi']$ of level $t-1$ and preserve the $\fl$-comprime factor $\ff_0$. Besides, there is no horizontal or descending edge.

\item[(ii)] For vertex $[\phi]\in \mathcal{G}_{\fl,K}^{\rm min, unif}$ with level $t=0$, the horizontal edge are $\fl$-cyclic isogenies $u:\phi\rightarrow \phi'$ with ${\rm End}(\phi')\cong \cO$, the number of horizontal edges is the number of primes $\fL$ of $\cO_K$ above $\fl$ with inertia degree $f(\fL\mid \fl)=1$ . Note that $[\phi]$ has no descending edges.

\end{enumerate}
\end{thm}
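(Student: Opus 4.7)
The plan is to work through the CM uniformization from Section 2.2: write $\phi=\phi_I$ for a proper $\cO$-ideal $I$ with $(I:I)=\cO$. Every $\fl$-cyclic isogeny from $\phi$ (up to isomorphic copy) is encoded by a one-dimensional $A/\fl$-subspace $H\subset \phi[\fl]\cong \fl^{-1}I/I$, with target Drinfeld module $\phi_{I'}$ for $I'\subset \fl^{-1}I$ the preimage of $H$, and target endomorphism ring $\cO':=(I':I')$. The whole question reduces to tracking, for each $H$, whether the pair (target ring, target level) keeps us inside $\mathcal{G}_{\fl,K}^{\rm min, unif}$ and, if so, whether the edge is ascending, horizontal, or descending.

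For part (ii), $[\phi]$ sits at level $t=0$, so $\cO$ is locally maximal at $\fl$ and $\cO/\fl\cO\cong \prod_{\fL\mid\fl}\cO_K/\fL^{e_\fL}$. The $\cO$-stable $H$'s are precisely the socles $\cO_K/\fL$ of $A/\fl$-dimension one, corresponding to primes $\fL\mid\fl$ with $f(\fL\mid\fl)=1$, and each yields a horizontal edge: for $\mathfrak{m}=\fL\cap\cO$, Lemma \ref{coprime} gives $\mathfrak{m}$ invertible with ${\rm N}_{K/F}(\mathfrak{m})=\fl$, so $\iota_\mathfrak{m}$ is horizontal by Theorem \ref{hor}. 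To rule out descending edges within the subgraph, I would analyze non-$\cO$-stable $H$'s and show that $\cO'_\fl$ fails to be of the uniform form $R_{\fl,(t',\ldots,t')}$ required by Proposition \ref{cond}, so the target exits $\mathcal{G}_{\fl,K}^{\rm min, unif}$.

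For part (i) at uniform level $t\geq 1$, Proposition \ref{cond}(i) makes $\cO_\fl=R_{\fl,t}$ a local ring with unique maximal ideal $\mathfrak{m}_t$. Setting $\mathfrak{m}:=\mathfrak{m}_t\cap\cO$ yields a $\cO$-ideal with $\cO/\mathfrak{m}\cong A/\fl$, so $\iota_\mathfrak{m}:\phi\to\phi_\mathfrak{m}$ is $\fl$-cyclic; by Proposition \ref{cond}(ii), $(\mathfrak{m}:\mathfrak{m})=R_{\fl,t-1}$ locally, which globally corresponds to the minimal order of uniform level $t-1$ sharing the $\fl$-coprime factor $\ff_0$. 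Thus $\iota_\mathfrak{m}$ is the claimed ascending edge. Uniqueness follows since any ascending target must have $\cO'_\fl\supsetneq R_{\fl,t}$, and Proposition \ref{cond}(ii) pins down $\cO'_\fl=R_{\fl,t-1}$, forcing the corresponding $H$ to be $\phi[\mathfrak{m}]$. Horizontal edges at level $t\geq 1$ are excluded by Theorem \ref{hor}(ii) since $\fl\mid\ff$, and descending edges are excluded inside $\mathcal{G}_{\fl,K}^{\rm min, unif}$ by the same non-uniformity argument as in part (ii).

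The main obstacle is the structural analysis of $\cO'_\fl=(I':I')_\fl$ for $I'$ arising from non-$\cO$-stable $H$. To complete both (i) and (ii), I must show that these suborders of $\cO_{K,\fl}$ are never of the uniform form $R_{\fl,(t',\ldots,t')}$ prescribed by Proposition \ref{cond}, thereby ruling out descending edges within the minimal-uniform component. This is a delicate local computation that will refine Proposition \ref{cond} and requires careful attention to the various split types of $\fl$ in $\cO_K$.
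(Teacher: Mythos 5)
Your proposal follows essentially the same route as the paper: identify $\cO_\fl$ with the local ring $R_{\fl,t}$ of Proposition \ref{cond}(i), take the unique maximal ideal $\mathfrak{m}_t$ (equivalently $\fl A+\ff\cO_K$) as the source of the single ascending edge, read off its right order via Proposition \ref{cond}(ii) to see that the level drops by one while $\ff_0$ is preserved, exclude horizontal edges at level $t\geqslant 1$ by Theorem \ref{hor} since $\fl\mid\ff$, and handle the crater (level $0$) by Theorem \ref{lmhor} with $m=1$, counting one horizontal edge per prime $\fL\mid\fl$ with $f(\fL\mid\fl)=1$. All of that matches the paper's argument step for step.

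The one place you diverge is that you explicitly isolate, and leave open, the analysis of $\fl$-cyclic isogenies whose kernels are not $\cO$-stable, i.e.\ the exclusion of descending edges and of horizontal edges not coming from ideals. The paper's proof closes this in a single sentence by asserting that every outgoing $\fl$-cyclic isogeny from $\phi$ corresponds to an $\cO$-ideal $\fa$ with $\cO/\fa\cong A/\fl$; no analysis of non-stable kernels or of the right orders $(I':I')$ is carried out there. So the obstacle you flag as requiring a ``delicate local computation'' is precisely the step the paper dispatches by fiat, and your instinct that it needs an actual argument is sound: a priori a one-dimensional $A/\fl$-subspace $H\subset\phi[\fl]$ that is not $\cO$-stable produces a target with a strictly different order, and one must check that this order is never again a minimal order of uniform level (or, if it is, that it sits at a strictly smaller level so the edge is still not descending or horizontal). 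A useful observation that neither you nor the paper records, and which partially defuses the worry, is that for $r>2$ the dual of an $\fl$-cyclic isogeny has kernel isomorphic to $(A/\fl)^{r-1}$ and hence is not itself an edge of the directed graph; so the in-edges guaranteed by Proposition \ref{deg} do not automatically force reverse (descending) edges. In summary: same approach as the paper, correct where completed, with one admitted gap that coincides with the tersest point of the paper's own proof.
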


\begin{proof}
For proof of (i), let $\phi$ be a Drinfeld module CM by a minimal order $\cO$ of conductor $\ff$, where $\ff\cO_K=\ff_0\cdot \prod_{i=1}^g\fL_i^{t}$ for some $t\in\mathbb{Z}_{\geqslant 1}$. As a $\fl$-cyclic isogeny outwarded from $\phi$ correspond to an  $\cO$-ideal $\fa$ with $\cO/\fa\cong A/\fl$. Thus it is enough to prove that $\fa=\fl A+\ff\cO_K$ is the only $\cO$-ideal such that $\cO/\fa\cong A/\fl$. From Proposition \ref{cond}(i), we have the property that for any element $z\in\cO-\fa$, it generates the $\cO$-ideal $z\cO$ which is coprime to $\fl$. The property that $\cO/\fa\cong A/\fl$ is clear. Now $\fa$ induces an $\fl$-cyclic isogeny $\iota_\fa:\phi\rightarrow \phi_\fa$ with ${\rm End}(\phi_\fa)=(\fa:\fa)$. We can deduce from Proposition \ref{cond}(ii) that $(\fa:\fa)=\fl A+\ff_0\prod_{i=1}^g\fL_i^{t-1}\cO_K$.

For (ii),  it follows from Theorem \ref{lmhor} (i) with the case $m=1$.  Let $\phi$ be a Drinfeld module CM by a minimal order $\cO$ of conductor $\ff$, where $\ff\cO_K=\ff_0\cO_K$. Then $[\phi]$ has no descending edges because for every $\cO$-ideal $\fa$ with $\cO/\fa\cong A/\fl$, we have that  $\fa$ is coprime to $\ff_0$.

\end{proof}

For non-uniform component of $\mathcal{G}_{\fl,K}^{\rm min}$, we show that every vertex $[\phi]$ has only one ascending edge,  no horizontal or vertical edges. 

\begin{prop}\label{nonunif}
Let $[\phi]$ be a non-uniform vertex with level $(t_{\fL_1},\cdots,t_{\fL_g})$, then it has only one ascending $\fl$-cyclic isogeny to a vertex $[\phi']$ of level $({\rm max}(t_{\fL_i}-1,0))_{1\leqslant i\leqslant g}$ and preserve the $\fl$-comprime factor $\ff_0$.
\end{prop}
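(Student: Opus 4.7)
The plan is to parallel the proof of Theorem \ref{voc}(i) with a more careful local analysis that separates the primes above $\fl$ into $S = \{i : t_{\fL_i} \geq 1\}$ and $S^c = \{i : t_{\fL_i} = 0\}$; non-uniformity of the level means both $S$ and $S^c$ are nonempty. First I would take the same candidate $\cO$-ideal $\fa = \fl A + \ff\cO_K$ as in the uniform case and verify $\cO/\fa \cong A/\fl$ by checking $\fa \cap A = \fl A$. Since $\ff\cO_K \cap A = \ff_0 \prod_{i \in S}\fL_i^{t_i} \cap A$ and each $\fL_i^{t_i}$ with $i \in S$ contracts into $\fl$ (and $\ff_0$ is coprime to $\fl$), this equality holds; hence $\iota_\fa : \phi \to \phi_\fa$ is a $\fl$-cyclic isogeny.

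Next I would compute $(\fa:\fa)$ by completing at $\fl$. Writing $\hat{\cO}_\fl$ as a fibered product of a ``thick'' local component $\hat{R}_S = \hat{A}_\fl + \prod_{i \in S}\hat{\fL}_i^{t_i}\hat{\cO}_{K,\fL_i}$ (with single maximal ideal $\mathfrak{m}_S = \fl\hat{A}_\fl + \prod_{i \in S}\hat{\fL}_i^{t_i}\hat{\cO}_{K,\fL_i}$ by Proposition \ref{cond}(i) restricted to the indices in $S$) against the DVRs $\hat{\cO}_{K,\fL_j}$ for $j \in S^c$, one finds that $\hat{\fa}_\fl$ coincides with $\mathfrak{m}_S$ on the thick component and with $\fl\hat{A}_\fl = \hat{\fL}_j^{e_j}\hat{\cO}_{K,\fL_j}$ on each $S^c$-slot. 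Applying Proposition \ref{cond}(ii) restricted to $S$ yields $(\mathfrak{m}_S:\mathfrak{m}_S) = \hat{A}_\fl + \prod_{i \in S}\hat{\fL}_i^{t_i - 1}\hat{\cO}_{K,\fL_i}$, while the $S^c$-slots contribute $\hat{\cO}_{K,\fL_j}$ unchanged. Gluing, $(\fa:\fa)$ is the completion of the minimal order $\cO' = A + \ff_0\prod_i \fL_i^{\max(t_i - 1, 0)}\cO_K$, which has level $(\max(t_{\fL_i} - 1, 0))_i$ and preserves the $\fl$-coprime factor $\ff_0$; so $\iota_\fa$ is the claimed ascending edge. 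For uniqueness, any other $\fl$-cyclic outgoing ideal would be a maximal ideal of $\cO$ with residue $A/\fl$. The maximal ideals of $\hat{\cO}_\fl$ with residue $\F_\fl$ are $\mathfrak{m}_S$ together with the maximal ideal $\mathfrak{M}_j$ at each $j \in S^c$ with $f(\fL_j\mid\fl) = 1$; for such $\mathfrak{M}_j$, $\cO$ is already locally maximal at $\fL_j$, so $(\mathfrak{M}_j:\mathfrak{M}_j) = \hat{\cO}_\fl$ and $\mathfrak{M}_j$ yields a horizontal rather than an ascending edge. Hence $\fa$ is the unique ascending $\fl$-cyclic outgoing ideal.

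The main obstacle will be extending Proposition \ref{cond} to the non-uniform setting. When some $t_{\fL_i} = 0$, $\hat{\cO}_\fl$ is no longer local but a fibered product whose maximal ideals must be enumerated: one must identify $\mathfrak{m}_S$ as the thick-component maximal ideal and re-run the Hensel-lifting argument of Proposition \ref{cond}(ii) restricted to the indices in $S$, then glue back the trivially-acting $S^c$-components to recover the global right order. Once this fibered-product decomposition and its right-order computation are in place, both existence and uniqueness of the ascending edge follow by the same template as Theorem \ref{voc}(i).
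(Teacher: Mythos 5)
Your proposal takes the same route the paper intends: the paper's own proof is a one-sentence reduction to the argument of Theorem \ref{voc}(i), and you carry that reduction out. What you add is the step that actually requires care in the non-uniform case, namely that when some $t_{\fL_j}=0$ the local-ring statement of Proposition \ref{cond}(i) no longer applies ($\cO_\fl$ splits off the full factors $\cO_{K,\fL_j}$ for $j\in S^c$ and acquires idempotents), so one must enumerate the maximal ideals of $\cO_\fl$ with residue field $A/\fl$ and check that the extra ones, supported at primes $\fL_j$ with $t_{\fL_j}=0$ and $f(\fL_j\mid\fl)=1$, produce horizontal rather than ascending edges. That is the correct repair, and it is what makes the uniqueness of the \emph{ascending} edge go through even though the outgoing $\fl$-cyclic ideal is no longer unique.

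One computational slip to fix: for $\fa=\fl A+\ff\cO_K$, the $S^c$-slot of $\fa_\fl$ is the \emph{full} ring $\cO_{K,\fL_j}$, not $\fl\cO_{K,\fL_j}$ as you wrote. Indeed $\fa_\fl=\fl A_\fl+\prod_i\fL_i^{t_i}\cO_{K,\fL_i}$ and $\fL_j^{t_j}\cO_{K,\fL_j}=\cO_{K,\fL_j}$ when $t_j=0$, so $\fa_\fl$ contains the whole $j$-th factor. Your description would force $|\cO/\fa|=|A/\fl|\cdot\prod_{j\in S^c}|A/\fl|^{e_jf_j}$, contradicting your own (correct) verification in the first paragraph that $\cO/\fa\cong A/\fl$. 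The slip does not propagate: a discrete valuation ring is the right order of any of its nonzero ideals, so in either reading $(\fa:\fa)_\fl=(\mathfrak{m}_S:\mathfrak{m}_S)\times\prod_{j\in S^c}\cO_{K,\fL_j}$, and the conclusion $(\fa:\fa)=A+\ff_0\prod_i\fL_i^{\max(t_{\fL_i}-1,0)}\cO_K$, with the $\fl$-coprime factor $\ff_0$ preserved, stands.
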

\begin{proof}
Similar proof as Theorem \ref{voc}(i), while starting with a Drinfeld module $\phi$ CM by a minimal order $\cO$ of conductor $\ff$ with $\ff\cO_K=\ff_0\prod_{i=1}^g\fL_i^{{\rm max}(t_{\fL_i}-1,0)}\cO_K$.

\end{proof}

Similar to $\ell$-isogeny graph for CM elliptic curves, we compute the number of level-$t$ vertices $[\phi]$ in $\mathcal{G}_{\fl, K}^{\rm min, unif}$ having ascending edge to the same vertex of level (t-1). 

\begin{prop}\label{deg}
For $t\geqslant 1$, suppose there is a level-$t$ vertices in $\mathcal{G}_{\fl, K}^{\rm min, unif}$ having ascending edge to the a vertex $[\phi']$ of level $t-1$, where ${\rm End}(\phi')\cong \cO_{\fl,t-1}:=A+\ff_0\prod_{i=1}^g\fL_i^{t-1}\cO_K$. Then the number of level-$t$ vertices having ascending edge toward $[\phi']$ is equal to cardinality of ${\rm ker}(\theta_t)$, where $\theta_t$ is the map $$\theta_t:{\rm Pic}(\cO_{\fl,t})\rightarrow {\rm Pic}(\cO_{\fl,t-1})\ \textrm{ via } [\fa]\mapsto [\fa\cO_{\fl,t-1}],$$
with $\cO_{\fl,t}:=A+\ff_0\prod_{i=1}^g\fL_i^{t}\cO_K$. Moreover, we have $|{\rm ker}(\theta_t)|=|A/\fl|^{r-1}$.
\end{prop}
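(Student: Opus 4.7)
The plan is to split the proof into two parts: identify the fiber of $\theta_t$ over $[\phi']$ with a torsor under $\ker(\theta_t)$, and then compute the order of that kernel.

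For the fiber identification, I would invoke the ideal-isogeny construction of Section 2.2, which endows the set of $\mathbb{C}_\infty$-isomorphism classes of rank-$r$ Drinfeld modules CM by $\cO_{\fl,t}$ with a simply transitive action of $\mathrm{Pic}(\cO_{\fl,t})$, and likewise at level $t-1$. If $\phi\cong\phi_I$ for a fractional $\cO_{\fl,t}$-ideal $I$, then the unique ascending $\fl$-cyclic isogeny of Theorem \ref{voc}(i) is induced by the lattice inclusion $I\subset\cO_{\fl,t-1}\cdot I$, producing $\phi_{\cO_{\fl,t-1}\cdot I}$ whose endomorphism ring is $(\cO_{\fl,t-1}\cdot I:\cO_{\fl,t-1}\cdot I)=\cO_{\fl,t-1}$. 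Passing to ideal classes, the ascending assignment becomes precisely $\theta_t$; since $[\phi']$ lies in the image by hypothesis, its preimage is a $\ker(\theta_t)$-torsor, giving the first claim.

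For the size, I would invoke the standard comparison sequence for nested orders $\cO_{\fl,t}\subset\cO_{\fl,t-1}$ with conductor $\mathfrak{c}$:
$$\cO_{\fl,t-1}^{*}\longrightarrow (\cO_{\fl,t-1}/\mathfrak{c})^{*}\big/(\cO_{\fl,t}/\mathfrak{c})^{*}\longrightarrow \ker(\theta_t)\longrightarrow 1.$$
Proposition \ref{cond}(ii) identifies $\mathfrak{c}$ locally at $\fl$ with the maximal ideal $\mathfrak{m}_t=\fl A_\fl+\prod_{\fL\mid\fl}\fL^{t}\cO_{K,\fL}$ of $(\cO_{\fl,t})_\fl$, while at every other prime the two orders coincide so $\mathfrak{c}$ contributes trivially. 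Hence $\cO_{\fl,t}/\mathfrak{c}\cong A/\fl$ with $|A/\fl|-1$ units. For the numerator I would establish that $(\cO_{\fl,t-1})_\fl$ is a free $A_\fl$-module of rank $r$ (a full-rank $A_\fl$-submodule of the rank-$r$ free module $\cO_{K,\fl}$) and that $\mathfrak{m}_t=\fl\cdot(\cO_{\fl,t-1})_\fl$. Then $(\cO_{\fl,t-1})_\fl/\mathfrak{m}_t\cong (A/\fl)^{r}$ is a local $A/\fl$-algebra of size $|A/\fl|^{r}$ with residue field $A/\fl$, whose unit group has size $(|A/\fl|-1)\cdot|A/\fl|^{r-1}$, yielding the ratio $|A/\fl|^{r-1}$. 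Finally, because $K/F$ is imaginary and both orders are minimal, $\cO_{\fl,t-1}^{*}=\cO_{\fl,t}^{*}=\F_q^{*}$, which already lies in $(\cO_{\fl,t}/\mathfrak{c})^{*}=(A/\fl)^{*}$, so its image in the middle of the sequence is trivial. Combining, $|\ker(\theta_t)|=|A/\fl|^{r-1}$.

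The main technical obstacle is the simultaneous verification of the free-module claim $(\cO_{\fl,t-1})_\fl\cong A_\fl^{r}$ and the ideal identification $\mathfrak{m}_t=\fl\cdot(\cO_{\fl,t-1})_\fl$. Both reduce to local analysis at each prime $\fL\mid\fl$ of $\cO_K$: the freeness uses the structure theorem for torsion-free modules over the PID $A_\fl$ together with the finite index $[\cO_{K,\fl}:(\cO_{\fl,t-1})_\fl]$, while the ideal identity amounts to $\fL^{t}=\fl\cdot\fL^{t-1}$ at each $\fL$ plus bookkeeping of the $A_\fl$-summand. Handling ramification of $\fl$ in $\cO_K$ is where the bookkeeping demands the most care, and where any generalization beyond the unramified setting would need additional argument.
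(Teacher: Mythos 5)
Your first step (identifying the fiber over $[\phi']$ with a $\ker(\theta_t)$-torsor via CM uniformization) follows the paper's route, but your description of the ascending edge is off in a way that matters for $r>2$. You take the ascending isogeny to be induced by the lattice inclusion $I\subset\cO_{\fl,t-1}\cdot I$; the kernel of that isogeny is $\cO_{\fl,t-1}I/I$, which locally at $\fl$ is $(\cO_{\fl,t-1})_\fl/(\cO_{\fl,t})_\fl$ and has order $|A/\fl|^{r-1}$ in the unramified case --- so this is not an $\fl$-cyclic isogeny unless $r=2$. The paper instead realizes the unique ascending edge of Theorem \ref{voc}(i) as $\iota_{\mathfrak{m}_t}:\phi_\Lambda\to\phi_{\mathfrak{m}_t^{-1}\Lambda}$, induced by the maximal ideal $\mathfrak{m}_t=\fl A+\ff_0\prod_i\fL_i^t\cO_K$ of $\cO_{\fl,t}$, whose cokernel is $\cO_{\fl,t}/\mathfrak{m}_t\cong A/\fl$. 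On ideal classes the two assignments $[\fa]\mapsto[\fa\cO_{\fl,t-1}]$ and $[\fa]\mapsto[\mathfrak{m}_t\fa]$ differ by translation by a fixed class, so your fiber count survives, but the edge you describe is not the edge being counted; you would need to either switch to $\mathfrak{m}_t$ or argue the translation explicitly.

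For $|\ker(\theta_t)|$ you take a genuinely different route from the paper: you invoke the standard conductor exact sequence for the nested orders $\cO_{\fl,t}\subset\cO_{\fl,t-1}$ and compute the unit-group ratio, whereas the paper builds an explicit bijection $\ker(\theta_t)\cong\bigl(\prod_i(1+\fL_i^{t-1})/(1+\fL_i^t)\bigr)\big/\bigl((1+\fl^{t-1}A)/(1+\fl^tA)\bigr)$ by hand via $x\mapsto[x^{-1}\cO_{\fl,t-1}\cap\cO_{\fl,t}]$. Your packaging is cleaner when it applies, but it has two concrete gaps beyond the ramification caveat you already flag. First, the exact sequence is standard for an order inside the maximal order; here both $\cO_{\fl,t}$ and $\cO_{\fl,t-1}$ are non-maximal, so you must prove (or cite) the sequence in this generality, including that $\mathfrak{m}_t$ is the conductor of the pair. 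Second, and more seriously, your unit count uses that $(\cO_{\fl,t-1})_\fl/\mathfrak{c}$ is a local $A/\fl$-algebra with residue field $A/\fl$; by Proposition \ref{cond}(i) the ring $(\cO_{\fl,t-1})_\fl$ is local only for $t-1\geqslant 1$, while for $t=1$ it is $\prod_{\fL\mid\fl}\cO_{K,\fL}$ and the quotient is a product of residue fields $\F_{\fL}$, so your ratio becomes $\prod_{\fL}(|A/\fl|^{f_\fL}-1)/(|A/\fl|-1)$ rather than $|A/\fl|^{r-1}$. Your argument therefore only establishes the stated count for $t\geqslant 2$ with $\fl$ unramified, and the $t=1$ case needs a separate treatment (indeed, carried out honestly it produces a splitting-dependent answer, which is worth reconciling with the uniform formula claimed in the proposition).
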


\begin{proof}

Let $[\phi']$ be a vertex in $\mathcal{G}_{\fl, K}^{\rm min, unif}$ of level $t-1$, and ${\rm End}(\phi')\cong \cO_{\fl,t-1}:=A+\ff_0\prod_{i=1}^g\fL_i^{t-1}\cO_K$. The level-$t$ vertices $[\phi]$ having ascending edge to $[\phi']$ must have ${\rm End}(\phi)\cong \cO_{\fl,t}$ from Theorem \ref{voc} (i). From CM uniformization (ref. section 2.2) it is well known that there is a 1-1 correspondence between the isomorphism classes of Drinfeld modules CM by $\cO_{\fl,t}$ and elements in the Picard group ${\rm Pic}(\cO_{\fl,t})$.  Indeed, one has $$[\fa]\in {\rm Pic}(\cO_{\fl,t})\leftrightarrow \cO_{\fl,t}\textrm{-lattice }\Lambda=\fa^{-1}\leftrightarrow \phi_\Lambda.$$

One can see that for a level-$t$ class $[\phi_\Lambda]$ that corresponds to $[\fa]\in{\rm Pic}(\cO_{\fl,t})$, it has the ascending edge $\iota_{\mathfrak{m}_t}:\phi_{\fa}\rightarrow \phi_{\mathfrak{m}_t^{-1}\Lambda}$ induced from the maximal ideal $\mathfrak{m}_t=\fl A+\ff_0\prod_{i=1}^g\fL^t\cO_K$ of $\cO_{\fl,t}$. Thus we have
$$[\fa]\in {\rm Pic}(\cO_{\fl,t})\leftrightarrow \cO_{\fl,t}\textrm{-lattice }\Lambda=\fa^{-1}\leftrightarrow \phi_\Lambda\xrightarrow{\iota_{\mathfrak{m}_t}}\phi_{\mathfrak{m}_t^{-1}\Lambda}\leftrightarrow \cO_{\fl,t-1}\textrm{-lattice }\mathfrak{m}_t^{-1}\fa^{-1}\leftrightarrow [\mathfrak{m}_t\fa]\in {\rm Pic}(\cO_{l\fl,t-1}) $$
Therefore, two classes $[\fa_1], [\fa_2]\in {\rm Pic}(\cO_{\fl,t})$ having ascending $\fl$-cyclic isogenies toward the same vertex of level $t-1$ if and only if $\mathfrak{m}_t^{-1}\fa_1^{-1}\cO_{\fl,t-1}=\mathfrak{m}_t^{-1}\fa_2^{-1}\cO_{\fl,t-1}$, which is equivalent to say $\fa_1\cO_{\fl,t-1}=\fa_2\cO_{\fl,t-1}$, i.e. $[\fa_1\fa_2^{-1}]\in {\rm ker}(\theta_t)$.

Now let's compute the cardinality of $\theta_t$, we claim that 
$${\rm ker}(\theta_t)\cong\frac{\prod_{i=1}^g(1+\fL^{t-1})/(1+\fL^t)}{1+\fl^{t-1}A/1+\fl^t A},$$
where the denominator embeds to numerator via the diagonal embedding $\iota: A_\fl^*\hookrightarrow \prod_{i=1}^g\cO_{K,\fL_i}^*$, so $|{\rm ker}(\theta_t)|=|A/\fl|^{r-1}$ by direct computation. Define $\mathcal{F}_m=\prod_{i=1}^g\fL^m\cO_K$ for $m\in\mathbb{Z}_{\geqslant 0}$, and the group homomorphism $$\Phi:1+\mathcal{F}_{t-1}\rightarrow {\rm Pic}(\cO_{\fl,t}) \textrm{ via } x\mapsto [x^{-1}\cO_{t-1}\cap\cO_{t}].$$
It's clear that $\phi(x)\in{\rm ker}(\theta_t)$ and $\Phi$ factors through $1+\mathcal{F}_t$. Furthermore, for any $a\in1+\fl^{t-1}A$ and $x\in 1+\mathcal{F}_{t-1}$, 
$$\Phi(ax)=[(ax)^{-1}\cO_{\fl,t-1}\cap\cO_{\fl,t}]=[x^{-1}(a^{-1}\cO_{\fl,t-1})\cap\cO_{\fl,t}]=[x^{-1}\cO_{\fl,t-1}\cap\cO_{\fl,t}][a^{-1}\cO_{\fl,t-1}\cap\cO_{\fl,t}]=\Phi(x).$$
Hence $\Phi$ descends to $\tilde{\Phi}:\frac{1+\mathcal{F}_{t-1}/1+\mathcal{F}_t}{1+\fl^{t-1}A/1+\fl^t A}\rightarrow {\rm ker}(\theta_t).$ Combining with the Chinese Remainder Theorem that implies $1+\mathcal{F}_{t-1}/1+\mathcal{F}_t\cong\prod_{i=1}^g(1+\fL^{t-1})/(1+\fL^t)$, it is enough to show $\tilde{\Phi}$ is bijective.

To show $\tilde{\Phi}$ is surjective, let $[\fa]\in {\rm ker}(\theta_t)$, then we have $\fa\cO_{\fl,t-1}=x\cO_{\fl,t-1}$ for some $x\in K^*$. As $\fa$ is an invertible $\cO_{\fl,t}-ideal$, we can deduce from the above equality that the valuation $v_\fL(x^{-1})=0$ for any prime $\fL$ of $\cO_K$ stands above $\fl$. By multiplying $x^{-1}$ with a suitable element in $\cO_K$, we may assume that $$x^{-1}\in \cO_{\fl,t-1}, \textrm{ and }v_\fL(x^{-1})=0\ \forall\fL\mid \fl.$$
Reducing modulo $\mathcal{F}_{t-1}$, one gets $\bar{x^{-1}}\in (\cO_{\fl,t-1}/\mathcal{F}_{t-1})^*\cong (A/A\cap \mathcal{F}_{t-1})^*$, thus one can find $a\in A^*$ such that $u:=x^{-1}a^{-1}\in 1+\mathcal{F}_{t-1}$, then $\tilde{\Phi}(u)=[x\cO_{\fl,t-1}\cap\cO_{\fl,t}]=[\fa]$. For injectivity of $\tilde{\Phi}$, if $\tilde{\Phi}(u_1)=\tilde{\Phi}(u_2)$, then$[(u_1/u_2)^{-1}\cO_{\fl,t-1}\cap\cO_{\fl,t}]=[\cO_{\fl,t}]$, which implies $u_1/u_2\in (1+\mathcal{F}_t)(1+\fl^{t-1}A)$. Hence $u_1$ and $u_2$ represent the same class in $\frac{1+\mathcal{F}_{t-1}/1+\mathcal{F}_t}{1+\fl^{t-1}A/1+\fl^t A}$.

\end{proof}

As a conclusion, we formulate the isogeny volcano structure for CM Drinfeld modules.

\begin{defi}
Fix a prime ideal $\fl$ of $A=\F_q[T]$, and an integer $r\geqslant 2$. A {\bf generalized $\fl$-cyclic volcano $V$} is a directed graph whose vertices partitioned in terms of levels $V_0, V_1, \cdots, V_d,\cdots$ such that the following hold:

\begin{enumerate}
\item[(i)] The subgraph on $V_0$ (the crater) is a regular graph of degree at most $r$. 
\item[(ii)] For $t>0$, each vertex in $V_t$ has exactly one upward edge toward a neighbor in $V_{t-1}$
\item[(iii)] If the partition of vertices stop at $V_d$ (the floor), then we say $d$ is the depth of $V$. Any vertex in $V_i$ with $1\leqslant i< d$ has out-degree $1$ and in-degree $|A/\fl|^{r-1}$;  vertices in $V_d$ has out-degree $1$ and in-degree $0$.
\item[(iv)] If $V$ has infinite depth, then any vertex in $V_t$ with $t>0$ must have one of followings:
\begin{itemize}
\item[(a)] out-degree $1$ and in-degree $|A/\fl|^{r-1}$
\item[(b)] out-degree $1$ and in-degree $0$.
\end{itemize}

\end{enumerate}

\end{defi}

\begin{rem}
\begin{enumerate}
\item[1.] From Theorem \ref{voc} and Proposition \ref{deg}, we know the minimal and uniform component $\mathcal{G}_{\fl,K}^{\rm min,unif}$ (see Definition \ref{lev}(iii)) is a generalized $\fl$-cyclic volcano.

\item[2.] As a comparison between $\mathcal{G}_{\fl,K}^{\rm min,unif}$ and the $\ell$-isogeny graph $\mathcal{G}_{k,\ell,\ff_0}$ with $\ff_0^2\Delta_K<-4$ for CM elliptic curves studied by Clark (\cite{C22}, section 4.2 ), we make the following chart:
\center
\begin{tikzpicture}
\clip node (m) [matrix,matrix of nodes,
fill=black!20,inner sep=0pt,
nodes in empty cells,
nodes={minimum height=1cm,minimum width=2.6cm,anchor=center,outer sep=0,font=\sffamily},
row 1/.style={nodes={fill=black,text=white}},
column 1/.style={nodes={fill=gray,text=white,align=right,text width=3cm,text depth=0.5ex}},
column 2/.style={text width=4cm,align=center,every even row/.style={nodes={fill=white}}},
column 3/.style={text width=7cm,align=center,every even row/.style={nodes={fill=white}},},
row 1 column 1/.style={nodes={fill=gray}},
prefix after command={[rounded corners=4mm] (m.north east) rectangle (m.south west)}
] {
                &       $ \mathcal{G}_{k,\ell,\ff_0}  $           &$\mathcal{G}_{\fl,K}^{\rm min,unif} $ \\
Depth 		& $\infty$					&	$\infty$ \\
Vertex at crater     & degree$=1+\left(\frac{\Delta_K}{\ell}\right)$ & out-degree$=g_1$; in-degree$=g_1+|A/\fl|^{r-1}$\\
Vertex below crater     & degree$=\ell+1$                              &
												 out-degree $1$ and in-degree $|A/\fl|^{r-1}$												\\
 Notes   			&	If $\ff_0\Delta_K\in\{-3,-4\}$, then 	$ \mathcal{G}_{k,\ell,\ff_0}  $ is not a $\ell$-volcano				&       There are non-uniform components and non-minimal orders in full $\fl$-cyclic isogeny graph                      \\
};
\end{tikzpicture}
Here $g_1:=\#\{\textrm{ prime } \fL| \fl \textrm{ in }\cO_K \textrm{ with } f(\fL|\fl)=1\}$.\\

\item[3.] When $r=2$, we know that every order of a quadratic imaginary extension $K/F$ is minimal, and the level of an order in $\cO_K$ is always uniform because the split types for a prime $\fl$ of $A$ in $\cO_K$ is simple: split, inert, or ramified. Therefore, we have $$\mathcal{G}_{\fl,K}^{\rm min,unif}=\mathcal{G}_{\fl,K}.$$
Thus the $\fl$-cyclic isogeny graph for rank-$2$ Drinfeld modules with CM by $K$ has a generalized volcano structure, and it is compatible with the volcano structure defined by Caranay \cite{C18} Definition 2.4.11.

\end{enumerate}

\end{rem}

\begin{ex}\label{r3case}
In this example, we show a brief picture on a connected component of $\mathcal{G}_{\fl,K}^{\rm min,unif} $  for the case $q=5, r=3$, and $\fl$ a prime of $A$ with split type $(1,2)$ in $\cO_K$:

\begin{enumerate}
\item[\bf cycle case:] 
Consider $K=F(\sqrt[3]{T^3+T+1})$, $\fl=(T)$, and $\cO=\cO_K$. Then $K/F$ is imaginary and $\fl\cO_K=\fL_1\fL_2$ with $f(\fL_1|\fl)=1$ and $f(\fL_2|\fl)=2$. Note that $\fL_1=(T,\sqrt[3]{T^3+T+1})$. The class number $|{\rm Pic}(\cO_K)|=6$ because $K$ is a genus-$1$ function field and there are $6$ many $\F_5$-rational point of the affince curve $\mathcal{C}: Y^3=X^3+X+1$ Now we compute the order of $[\fL_1]\in{\rm Pic}(\cO_K)$. One can compute directly to show that$[\fL_1^2]$ and $[\fL_1^3]$ are nontrivial, so  the order of $[\fL_1]\in{\rm Pic}(\cO_K)$ is $6$. Hence a connected component of  $\mathcal{G}_{\fl,K}^{\rm min,unif} $ should be of the shape below.

\[\begin{tikzcd}[column sep=tiny]
	&&&&&&&& \bullet &&& \bullet \\
	\\
	&&& {V_0} &&& \bullet && {\bullet\cdots} & \bullet & \bullet & {\cdots\bullet} && \bullet \\
	\\
	&&&&& {\bullet\cdots} & \bullet && \bullet &&& \bullet && \bullet & {\cdots\bullet} \\
	\\
	{} &&& {V_1} &&&& {\bullet\cdots} & \bullet &&& \bullet & {\cdots\bullet}
	\arrow[from=1-9, to=1-12]
	\arrow[from=1-12, to=3-14]
	\arrow[from=3-7, to=1-9]
	\arrow[from=3-9, to=1-9]
	\arrow[from=3-10, to=1-9]
	\arrow[from=3-11, to=1-12]
	\arrow[from=3-12, to=1-12]
	\arrow[from=3-14, to=5-12]
	\arrow[from=5-6, to=3-7]
	\arrow[from=5-7, to=3-7]
	\arrow[from=5-9, to=3-7]
	\arrow[from=5-12, to=5-9]
	\arrow[from=5-14, to=3-14]
	\arrow[from=5-15, to=3-14]
	\arrow[from=7-8, to=5-9]
	\arrow[from=7-9, to=5-9]
	\arrow[from=7-12, to=5-12]
	\arrow[from=7-13, to=5-12]
\end{tikzcd}\]
Each vertex in $V_0$ would have inward edges from $|\F_5[T]/(T)|^{3-1}=25$ vertices in $V_1$, this trend is followed for layer $V_d$ with $d\geqslant 2$.

\item[\bf self-loop case:]
Consider $K=F(\sqrt[3]{T+1})$, $\fl=(T)$, and $\cO=\cO_K$. Again$K/F$ is imaginary and $\fl\cO_K=\fL_1\fL_2$ with $f(\fL_1|\fl)=1$ and $f(\fL_2|\fl)=2$.  Moreover, the class number $|{\rm Pic}(\cO_K)|=1$ because $K$ is a genus $0$ function field. Hence a connected component of  $\mathcal{G}_{\fl,K}^{\rm min,unif} $ should be of the shape below.

\[\begin{tikzcd}
	{V_0} &&&&& \bullet \\
	{V_1} &&& \bullet & \cdots & \cdots & \bullet \\
	{V_2} && \bullet & \cdots & \bullet & \bullet & \cdots & \bullet
	\arrow[from=1-6, to=1-6, loop, in=55, out=125, distance=10mm]
	\arrow[from=2-4, to=1-6]
	\arrow[from=2-7, to=1-6]
	\arrow[from=3-3, to=2-4]
	\arrow[from=3-5, to=2-4]
	\arrow[from=3-6, to=2-7]
	\arrow[from=3-8, to=2-7]
\end{tikzcd}\]

Here in the layer $V_1$ there are $|\F_5[T]/(T)|^{3-1}=25$ vertices, and each vertex in $V_1$ would have inward edges from $25$ vertices in $V_2$, this trend is followed for layer $V_d$ with $d\geqslant 3$

\end{enumerate}

\end{ex}

\bibliographystyle{alpha}
\bibliography{ModularPolyDM.bib}

@book {G96,
    AUTHOR = {Goss, David},
     TITLE = {Basic structures of function field arithmetic},
    SERIES = {Ergebnisse der Mathematik und ihrer Grenzgebiete (3) [Results
              in Mathematics and Related Areas (3)]},
    VOLUME = {35},
 PUBLISHER = {Springer-Verlag, Berlin},
      YEAR = {1996},
     PAGES = {xiv+422},
      ISBN = {3-540-61087-1},
   MRCLASS = {11G09 (11L05 11R58)},
  MRNUMBER = {1423131},
MRREVIEWER = {Jeremy T. Teitelbaum},
       DOI = {10.1007/978-3-642-61480-4},
       URL = {https://doi.org/10.1007/978-3-642-61480-4},
}

@article {Gek91,
    AUTHOR = {Gekeler, Ernst-Ulrich},
     TITLE = {On finite {D}rinfeld{} modules},
   JOURNAL = {J. Algebra},
  FJOURNAL = {Journal of Algebra},
    VOLUME = {141},
      YEAR = {1991},
    NUMBER = {1},
     PAGES = {187--203},
      ISSN = {0021-8693,1090-266X},
   MRCLASS = {11G09 (11R58)},
  MRNUMBER = {1118323},
MRREVIEWER = {David\ Goss},
       DOI = {10.1016/0021-8693(91)90211-P},
       URL = {https://doi.org/10.1016/0021-8693(91)90211-P},
}

@misc{C22,
      title={CM Elliptic Curves: Volcanoes, Reality and Applications}, 
      author={Pete L. Clark},
      year={2022},
      eprint={2212.13316},
      archivePrefix={arXiv},
      primaryClass={math.NT},
      url={https://arxiv.org/abs/2212.13316}, 
}

@article {BR16,
    AUTHOR = {Breuer, Florian and R\"uck, Hans-Georg},
     TITLE = {Drinfeld modular polynomials in higher rank {II}: {K}ronecker
              congruences},
   JOURNAL = {J. Number Theory},
  FJOURNAL = {Journal of Number Theory},
    VOLUME = {165},
      YEAR = {2016},
     PAGES = {1--14},
      ISSN = {0022-314X,1096-1658},
   MRCLASS = {11G09 (11F52)},
  MRNUMBER = {3479213},
MRREVIEWER = {David\ Tweedle},
       DOI = {10.1016/j.jnt.2016.01.001},
       URL = {https://doi.org/10.1016/j.jnt.2016.01.001},
}

@misc{BR24,
      title={Drinfeld modular polynomials of level $T$}, 
      author={Florian Breuer and Mahefason Heriniaina Razafinjatovo},
      year={2024},
      eprint={2412.11324},
      archivePrefix={arXiv},
      primaryClass={math.NT},
      url={https://arxiv.org/abs/2412.11324}, 
}

@article {S95,
    AUTHOR = {Schweizer, Andreas},
     TITLE = {On the {D}rinfeld{} modular polynomial
              {$\Phi_T(X,Y)$}},
   JOURNAL = {J. Number Theory},
  FJOURNAL = {Journal of Number Theory},
    VOLUME = {52},
      YEAR = {1995},
    NUMBER = {1},
     PAGES = {53--68},
      ISSN = {0022-314X,1096-1658},
   MRCLASS = {11G09},
  MRNUMBER = {1331765},
       DOI = {10.1006/jnth.1995.1055},
       URL = {https://doi.org/10.1006/jnth.1995.1055},
}

@book {K96,
    AUTHOR = {Kohel, David Russell},
     TITLE = {Endomorphism rings of elliptic curves over finite fields},
      NOTE = {Thesis (Ph.D.)--University of California, Berkeley},
 PUBLISHER = {ProQuest LLC, Ann Arbor, MI},
      YEAR = {1996},
     PAGES = {117},
      ISBN = {978-0591-32123-4},
   MRCLASS = {99-05},
  MRNUMBER = {2695524},
       URL =
              {http://gateway.proquest.com/openurl?url_ver=Z39.88-2004&rft_val_fmt=info:ofi/fmt:kev:mtx:dissertation&res_dat=xri:pqdiss&rft_dat=xri:pqdiss:9723065},
}

@article {B97,
    AUTHOR = {Bae, Sunghan and Lee, Seungjae},
     TITLE = {On the coefficients of the {D}rinfeld modular equation},
   JOURNAL = {J. Number Theory},
  FJOURNAL = {Journal of Number Theory},
    VOLUME = {66},
      YEAR = {1997},
    NUMBER = {1},
     PAGES = {85--101},
      ISSN = {0022-314X,1096-1658},
   MRCLASS = {11G09 (11G15)},
  MRNUMBER = {1467191},
MRREVIEWER = {Yoshinori\ Hamahata},
       DOI = {10.1006/jnth.1997.2163},
       URL = {https://doi.org/10.1006/jnth.1997.2163},
}

@article {BJW17,
    AUTHOR = {Brooks, Ernest Hunter and Jetchev, Dimitar and Wesolowski,
              Benjamin},
     TITLE = {Isogeny graphs of ordinary abelian varieties},
   JOURNAL = {Res. Number Theory},
  FJOURNAL = {Research in Number Theory},
    VOLUME = {3},
      YEAR = {2017},
     PAGES = {Paper No. 28, 38},
      ISSN = {2522-0160,2363-9555},
   MRCLASS = {11G10},
  MRNUMBER = {3718564},
MRREVIEWER = {Davide\ Lombardo},
       DOI = {10.1007/s40993-017-0087-5},
       URL = {https://doi.org/10.1007/s40993-017-0087-5},
}

@misc{AMS25,
      title={Isogeny graphs of abelian varieties and singular ideals in orders}, 
      author={Sarah Arpin and Stefano Marseglia and Caleb Springer},
      year={2025},
      eprint={2508.03570},
      archivePrefix={arXiv},
      primaryClass={math.NT},
      url={https://arxiv.org/abs/2508.03570}, 
}

@incollection {AA10,
    AUTHOR = {Enge, Andreas and Sutherland, Andrew V.},
     TITLE = {Class invariants by the {CRT} method},
 BOOKTITLE = {Algorithmic number theory},
    SERIES = {Lecture Notes in Comput. Sci.},
    VOLUME = {6197},
     PAGES = {142--156},
 PUBLISHER = {Springer, Berlin},
      YEAR = {2010},
      ISBN = {978-3-642-14517-9; 3-642-14517-5},
   MRCLASS = {11Y16 (11G16 11G20)},
  MRNUMBER = {2721418},
MRREVIEWER = {A.\ Peth\H o},
       DOI = {10.1007/978-3-642-14518-6\_14},
       URL = {https://doi.org/10.1007/978-3-642-14518-6_14},
}

@book {C18,
    AUTHOR = {Caranay, P.},
     TITLE = {Computing Isogeny Volcanoes of Rank Two Drinfeld Modules},
      NOTE = {Thesis (Ph.D.)--University of Calgary, Calgary, Canada},
 PUBLISHER = {},
      YEAR = {2018},
     PAGES = {},
      ISBN = {},
   MRCLASS = {},
  MRNUMBER = {},
       URL =
              {http://hdl.handle.net/1880/106320},
}

@article {P98,
    AUTHOR = {Potemine, Igor Yu.},
     TITLE = {Minimal terminal {${\bf Q}$}-factorial models of {D}rinfeld
              coarse moduli schemes},
   JOURNAL = {Math. Phys. Anal. Geom.},
  FJOURNAL = {Mathematical Physics, Analysis and Geometry. An International
              Journal Devoted to the Theory and Applications of Analysis and
              Geometry to Physics},
    VOLUME = {1},
      YEAR = {1998},
    NUMBER = {2},
     PAGES = {171--191},
      ISSN = {1385-0172},
   MRCLASS = {11G09 (14D22 14M25)},
  MRNUMBER = {1690499},
MRREVIEWER = {Ernst-Ulrich Gekeler},
       DOI = {10.1023/A:1009724323513},
       URL = {https://doi.org/10.1023/A:1009724323513},
}

\end{document}